\newtheorem{Thm}{Theorem}
\newtheorem{Def}{Definition}
\newtheorem{Prop}{Proposition}
\newtheorem{Lemma}{Lemma}
\newtheorem{Coro}{Corollary}
\newtheorem{Remark}{Remark}
\newtheorem{Example}{Example}
\newenvironment{proof}[1][Proof]{\textbf{#1.} }{\hfill $\square$}
\newcommand{\cad}{c\`adl\`ag }
\newcommand{\what}{\widehat}
\newcommand{\wtil}{\widetilde}
\newcommand{\dist}{\mbox{{\rm dist}}}
\def \vth{\vartheta}
\def \eps{\varepsilon}
\def \N{\mathbb{N}}
\def \R{\mathbb{R}}
\def \E{\mathbb{E}}
\def \F{\mathcal{F}}
\def \bF{\mathbb{F}}
\def \cY{\mathcal{Y}}
\def \bH{\mathbb{H}}
\def \bS{\mathbb{S}}
\def \tOm{\widetilde{\Omega}}
\def \cU{\mathcal{U}}
\def \cP{\mathcal{P}}
\def \cS{\mathcal{S}}
\def \cA{\mathcal{A}}
\def \cZ{\mathcal{Z}}
\def \cD{\mathcal{D}}
\def \cM{\mathcal{M}}
\def \ds{\displaystyle}
\def \tP{\widetilde{\mathcal{P}}}
\def \tpi{\widetilde{\pi}}
\def \bX{\overline{X}}
\def \ba{\overline{\alpha}}
\def \bb{\overline{\beta}}
\def \al{\alpha}
\def \P{\mathbb{P}}
\def \1{\mathbf{1}}
\newcommand*{\Id}{\operatorname{Id}}
\newcommand*{\trace}{\operatorname{Trace}}
\DeclareMathOperator*{\essinf}{\operatorname{essinf}}
\begin{document}

\title{Minimal supersolutions for BSDEs with singular terminal condition and application to optimal position targeting.}
\author{T. Kruse \thanks{University of Duisburg-Essen, Thea-Leymann-Str. 9, 45127 Essen, Germany,
e-mail: {\tt thomas.kruse@uni-due.de}
}
, A. Popier \thanks{Laboratoire Manceau de Math\'ematiques, Universit\'e du
Maine, Avenue Olivier Messiaen, 72085 Le Mans, Cedex 9, France.\hfill\break
e-mail: {\tt alexandre.popier@univ-lemans.fr}
}
}
\date{\today}

\maketitle

\begin{abstract}
We study the existence of a minimal supersolution for backward stochastic differential equations when the terminal data can take the value $+\infty$ with positive probability. We deal with equations on a general filtered probability space and with generators satisfying a general monotonicity assumption. With this minimal supersolution we then solve an optimal stochastic control problem related to portfolio liquidation problems. We generalize the existing results in three directions: firstly there is no assumption on the underlying filtration (except completeness and quasi-left continuity), secondly we relax the terminal liquidation constraint and finally the time horizon can be random.
\end{abstract}

\section*{Introduction}

This paper is devoted to the study of backward stochastic differential equations (BSDEs) with {\it singular} terminal condition. We adopt from \cite{popi:06} and \cite{popi:07} the notion of a weak (super)solution $(Y,\psi,M)$ to a BSDE of the following form
\begin{equation} \label{eq:bsde_2}
dY_t  = - f(t,Y_t,\psi_t) dt + \int_\cZ \psi_t(z) \tpi(dz,dt) + d M_t,
\end{equation}
where $\tpi$ is a compensated Poisson random measure on a probability space $(\Omega,\F,\P)$ with a filtration $\bF = (\F_t)_{t\geq 0}$. The filtration $\bF$ is supposed to be complete and right continuous. In particular, it can support a Brownian motion orthogonal to $\tpi$. The solution component $M$ is required to be a local martingale orthogonal to $\tpi$. The function $f:\Omega\times \R_+ \times \R \times \R^d\to \R$ is called the {\it driver} (or {\it generator}) of the BSDE. The particularity here is that we allow the {\it terminal condition} $\xi$ to be {\it singular}: for a stopping time $\tau$, the random variable $\xi$ is $\F_\tau$-measurable and takes the value $+\infty$ with positive probability. 

In our first main result (Theorem \ref{thm:main_thm_1}) we establish existence of a {\it minimal} weak supersolution to \eqref{eq:bsde_2}. This supersolution is constructed via approximation from below. For each $L>0$ we consider a truncated version of \eqref{eq:bsde_2} with terminal condition $\xi\wedge L$. We impose that the driver $f$ satisfies a monotonicity assumption in the $y$-variable and is Lipschitz continuous with respect to $\psi$. Then existence, uniqueness and comparison results for a solution $(Y^L,\psi^L,M^L)$ to the truncated BSDE can be deduced from \cite{krus:popi:14}, where the theory of BSDEs with a monotone driver in a general filtration has been developed. We obtain the minimal supersolution $(Y,\psi,M)$ with singular terminal condition by passing to the limit $L\to \infty$. The crucial task is to establish suitable a priori estimates for $Y^L$ guaranteeing that when passing to the limit the solution $Y$ does {\it not} explode before time $\tau$. To this end, the generator $f$ cannot be Lipschitz continuous w.r.t. $y$. Hence we impose that $f$ is monotone and decreases at least polynomially with random coefficient in the $y$-variable. In the case where $\tau$ is deterministic this condition suffices to ensure boundedness of $Y^L$. When $\tau$ is random, we restrict attention to first exit of diffusions from a regular set.

BSDEs with singular terminal condition were already studied in \cite{anki:jean:krus:13} and \cite{popi:06} for deterministic terminal time (see also \cite{grae:hors:qiu:13} for a treatise on BSPDEs), and in \cite{popi:07} for a random terminal time. Let us briefly outline in which directions our findings generalize some results from these papers.
\begin{itemize}
\item {\it General driver} $f$. Indeed, in the previously mentioned papers $f$ is assumed to be a polynomial function of $y$ (plus possibly a particular bounded from above function of $\psi$ in \cite{grae:hors:qiu:13}). Here $f$ is supposed to be only bounded from above by a polynomial function w.r.t. $y$. The fact that we only assume here that $f$ is Lipschitz continuous with respect to $\psi$ but not necessarily bounded, requires to derive new a priori estimates for the family of solutions $(Y^L)$. Moreover as in \cite{anki:jean:krus:13}, the generator can be {\it singular} in the sense that the process $f^0_t = f(t,0,0)$ can explode at time $\tau$. We only impose an integrability condition on $f^0$ which is weaker than the condition in \cite{anki:jean:krus:13}. This weaker integrability condition and the occurence of jumps imply that the convergence of the approximating sequence $(Y^L)_{L>0}$ has to be handled more carefully (see in particular the proof of Proposition \ref{thm:exists_sing_sol} where technical details are postponed in the appendix). BSDEs where the generator possesses a singularity in the time variable were studied in \cite{jean:reve:14} and \cite{jean:mast:poss:15} to solve utility maximization problems with random horizon.

\item {\it General filtration} $\bF$. Moreover, compared to the papers \cite{anki:jean:krus:13}, \cite{popi:06} and \cite{popi:07}, we do not restrict attention to a filtration generated by Brownian and Poisson noise. Here the filtration $\bF$ satisfies only the standard assumptions (completeness and right-continuity). Hence the additional local martingale part $M$ appears in the BSDE and has to be controlled when we let $L$ go to $+\infty$. The quasi left-continuity condition on $\bF$ will be imposed only to ensure the lower semi-continuity of $Y$ at time $\tau$: $\displaystyle \liminf_{t\to \tau} Y_{t}\geq \xi$.

\item {\it Random terminal time} $\tau$. To our best knowledge, \cite{popi:07} is the only paper that deals with a singular terminal condition at a random time $\tau$. In this work, the generator $f$ is equal to $f(y) = - y |y|^{q-1}$ for some $q > 1$ and the filtration is generated by a Brownian motion. When the terminal time is random, the derivation of the a priori estimate for the sequence $Y^L$ is more involved than in the deterministic case. For a general random time $\tau$, we show that the limit process $Y$ may be infinite before time $\tau$. For this reason, we consider the first exit time of a continuous diffusion from a regular set and our estimate is a generalization of the Keller-Osserman inequality. 
\end{itemize}
We also note that our results can be extended to the case where the driver is additionally a Lipschitz continuous function of a variable $Z$, which represents the integrand in the martingale representation w.r.t.\ a Brownian motion (c.f.\ Remark \ref{rem:Brownian_motion}). 

\vspace{1cm}
Since the seminal paper by Pardoux and Peng \cite{pard:peng:90} BSDEs have proved to be a powerful tool to solve stochastic optimal control problems (see e.g.\ the survey article \cite{el1997backward} or the book \cite{pham2009continuous}). In the second part of the paper we use the notion of weak supersolutions to provide a purely probabilistic solution of a stochastic control problem with a terminal constraint on the controlled process. More precisely, we consider the problem of minimizing the cost functional \footnote{We define $0\cdot \infty := 0$.}
\begin{equation}\label{eq:control_pb_intro}
J(X) = \E \left[  \int_0^\tau \left( \eta_s |\alpha_s|^p + \gamma_s |X_s|^p + \int_\cZ \lambda_s(z) |\beta_s(z)|^p \mu(dz) \right) ds  + \xi  |X_\tau|^p  \right]
\end{equation}
over all progressively measurable processes $X$ that satisfy the dynamics
\begin{equation*}
X_s =x +\int_0^s \alpha_u du +\int_0^s \int_\cZ \beta_u(z) \pi(dz,du).
\end{equation*}
Here $p>1$ and the processes $\eta, \gamma$ and $\lambda$ are non negative progressively measurable. Again the $\mathcal F_\tau$-measurable random variable $\xi$ takes the value $\infty$ with positive probability. This singularity imposes the terminal state constraint on the set of strategies. Indeed, any strategy $X$ that does not satisfy this terminal constraint creates infinite costs. In particular, such a strategy cannot be optimal if there exists some strategy that creates finite costs (which will always be the case under the assumptions that we impose). In the cases where $\tau$ is deterministic or a first exit time, we characterize optimal strategies and the value function of this control problem with the BSDE
\begin{equation} \label{eq:bsde_contr_prob}
dY_t  = (p-1)  \frac{Y_t^{q}}{\eta_t^{q-1}} dt + \Theta(t,Y_t,\psi_t) dt - \gamma_t dt + \int_\cZ \psi_t(z) \tpi(dz,dt) + d M_t
\end{equation}
with $\displaystyle \liminf_{t\to \tau} Y_t \geq \xi$. Here $q > 1$ is the H\"older conjugate of $p$ and $\Theta$ is a Lipschitz continuous function (see \eqref{eq:generator} for the precise definition). We provide sufficient conditions on the coefficient processes $\eta, \gamma$ and $\lambda$ such that Theorem \ref{thm:main_thm_1} ensures existence of a minimal weak supersolution to \eqref{eq:bsde_contr_prob} and carry out a verification that is based on a penalization argument.

The analysis of optimal control problems with state constraints on the terminal value is motivated by models of optimal portfolio liquidation under stochastic price impact. The traditional assumption that all trades can be settled without impact on market dynamics is not always appropriate when investors need to close large positions over short time periods. In recent years models of optimal portfolio liquidation have been widely developed, see, e.g. \cite{almg:12}, \cite{almg:chri:01}, \cite{fors:kenn:12}, \cite{gath:schi:11}, \cite{hors:nauj:14}, or \cite{krat:scho:13}, among many others.

Variants of the position targeting problem \eqref{eq:control_pb_intro} have been studied in \cite{anki:jean:krus:13}, \cite{anki:krus:13b}, \cite{schie:13}, \cite{grae:hors:qiu:13} or \cite{grae:hors:sere:13}. In this framework the state process $X$ denotes the agent's position in the financial market. She has two means to control her position. At each point in time $t$ she can trade in the primary venue at a rate $\alpha_t$ which generates costs $\eta_t |\alpha_t|^p$ incurred by the stochastic price impact parameter $\eta_t$. Moreover, she can submit passive orders to a secondary venue ("dark pool"). These orders get executed at the jump times of the Poisson random measure $\pi$ and generate so called slippage costs $\int_\cZ \lambda_t(z) |\beta_t(z)|^p\mu(dz)$. We refer to \cite{krat:scho:13} for a more detailed discussion. The term $\gamma_t |X_t|^p$ can be understood as a measure of risk associated to the open position. $J(X)$ thus represents the overall expected costs for closing an initial position $x$ over the time period $[0,\tau]$ using strategy $X$.

Our approach allows to incorporate some novel features into optimal liquidation models. First, we do not impose any assumption on the filtration (except quasi-left continuity). For the financial model, this means that the noise is not necessarily generated by a Brownian motion. Moreover, the liquidation constraint is relaxed in the following way. Instead of enforcing the condition $X_\tau =0$ a.s., that is the position has to be closed imperatively, our model is flexible enough to allow for a specification of a set of market scenarios $\cS \subset \F_\tau$ where liquidation is mandatory: $X_\tau \1_{\cS} = 0$. On the complement $\cS^c$ a penalization depending on the remaining position size can be implemented. This terminal constraint is described by the $\F_\tau$-measurable non negative random variable $\xi$ such that $\cS = \{ \xi = +\infty\}$. Thus for a binding liquidation $X_\tau=0$, we take $\xi = +\infty$ a.s. For excepted scenarios, we can consider $\xi = \infty \1_\cS$ with for example $\cS = \{ \max_{t \in [0,T]} \eta_t \leq H\}$ or $\cS = \{ \int_0^T \eta_t dt \leq H\}$ for a given threshold $H>0$. This means that liquidation is only mandatory if the maximal price impact (or the average price impact) is small enough throughout the liquidation period. If the illiquidity of the market is too high, the trader has not obligatorily to close his position.
Finally, our model allows for a random time horizon $\tau$. For example, one can consider {\it price-sensitive} liquidation periods where the position has to be closed before the first time when the unaffected market price $S$ (a diffusion) falls below some threshold level $K>0$, i.e. $\tau=\inf\{t\ge 0|S_t\le K\}$.

The paper is decomposed as follows. In the first section, we give the mathematical setting and present the main results concerning the BSDE \eqref{eq:bsde_2}. The set of assumptions will differ in the two cases $\tau$ deterministic (Theorem \ref{thm:main_thm_1}) and $\tau$ random (Theorem \ref{thm:main_thm_2}). We construct a supersolution of the BSDE \eqref{eq:bsde_2} using truncation arguments as in \cite{popi:06} or \cite{anki:jean:krus:13} and we prove that this solution is minimal. As mentioned before the main difficulties are to control the sequence of solutions for the truncated BSDE (see Propositions \ref{prop:upper_bound_Y_L} and \ref{prop:Keller-Osserman}) and to prove the convergence of the approximating sequence. In Section \ref{sect:back_control} we use the previous results to obtain a minimal supersolution for BSDE \eqref{eq:bsde_contr_prob} and we verify that this solution gives the value function and an optimal control for the optimal position targeting problem  (Theorem \ref{thm:main_thm_3}).

\section{Minimal supersolutions for the singular BSDE} \label{sect:exist_min_sol}

\subsection{Setting and notation}\label{setting and notation}

We consider a filtered probability space $(\Omega,\F,\P,\bF = (\F_t)_{t\geq 0})$. The filtration is assumed to be complete and right continuous. Moreover, we assume that $\bF$ is quasi-left continuous, which means that for every sequence $(\tau_n)$ of $\bF$ stopping times such that $\tau_n \nearrow \tilde \tau$ for some stopping time $\tilde \tau$ we have $\bigvee_{n\in \N}\F_{\tau_n}=\F_{\tilde \tau}$. We assume that $(\Omega,\F,\P,\bF = (\F_t)_{t\geq 0})$ supports a Poisson random measure $\pi$ with intensity $\mu(dz)dt$ on the space $\cZ \subset \R^d \setminus \{0\}$. The measure $\mu$ is $\sigma$-finite on $\cZ$ such that
$$\int_\cZ (1\wedge |z|^2) \mu(dz) <+\infty.$$

By $\cP$ we denote the predictable $\sigma$-field on $\Omega \times \R_+$. We set $\tP=\cP \otimes \mathcal{B}(\cZ)$ where $\mathcal{B}(\cZ)$ is the Borelian $\sigma$-field on $\cZ$. On $\tOm = \Omega \times [0,T] \times \cZ$, a function that is $\tP$-measurable, is called predictable. $G_{loc}(\pi)$ is the set of $\tP$-measurable functions $\psi$ on $\tOm$ such that for any $t \geq 0$ a.s.
$$ \int_0^t \int_\cZ (|\psi_s(z)|^2\wedge |\psi_s(z)|) \mu(dz) ds < +\infty.$$
For any stopping time $\tilde \tau$ and $m> 1$, the set $L^m_\pi(0,\tilde \tau)$ contains all processes $\psi \in G_{loc}(\mu)$ such that
$$\E \left[ \int_0^{\tilde \tau} \int_{\cZ} |\psi_s(z)|^m \mu(dz) ds  \right] < +\infty .$$
By $L^m_\mu=L^m(\cZ,\mu;\R^d)$ we denote the set of measurable functions $\psi : \cZ \to \R^d$ such that
$$\| \psi \|^m_{L^m_\mu} = \int_{\cZ} |\psi(z)|^m \mu(dz)  < +\infty .$$

By $ \cM^\perp$ we denote the set of c\`adl\`ag local martingales orthogonal to $\tpi$. If $M \in \cM^\perp$ then
$\E(\Delta M * \pi | \tP) = 0,$ where the product $*$ denotes the integral process (see II.1.5 in \cite{jaco:shir:03}).
For any stopping time $\tilde \tau$ the set $\cM^m(0,\tilde \tau)$ is the subset of all martingales such that
$\E \left( [M]_{\tilde \tau}^{m/2}\right) < +\infty.$
Finally, for $m> 1$, $\bS^m(0,\tilde \tau)$ is the set of all progressively measurable \cad processes $F$ such that $\E \left[  \sup_{t\in [0,\tilde \tau]} |F_t|^m \right] < +\infty.$ The set $\bH^m(0,\tilde \tau)$ contains all progressively measurable \cad processes $F$ such that $\E \left[ \left( \int_{0}^{\tilde \tau} |F_t|^2 dt \right)^{m/2} \right] < +\infty.$

\subsection{Deterministic terminal times}\label{bsde_det_time}

In this section let $T>0$ and let $\xi$ be a $\F_T$-measurable random variable. We denote by $\cS$ the set $\{\xi = +\infty\}$. 
Since we explicitly allow $\xi$ to take the value $+\infty$ with positive probability, we need to specify a weak notion of solutions to \eqref{eq:bsde_2}. We relax the usual definition of a solution to a BSDE by only requiring that \eqref{eq:bsde_2} holds strictly before time $T$.

\begin{Def}[Weak supersolution in the case of deterministic terminal times] \label{def:sol_sing_BSDE}
We say that a triple of processes $(Y,\psi,M)$ is a supersolution to the BSDE \eqref{eq:bsde_2} with singular terminal condition $Y_T = \xi$ if it satisfies:
\begin{enumerate}
\item $M\in \cM^\perp$, $\psi \in G_{loc}(\pi)$ and there exists some $\ell > 1$ such that for all $t<T$:
$$\E \left( \sup_{s\in [0,t]} |Y_{s}|^\ell + \int_0^{t} \int_\cZ |\psi_s(z)|^\ell \mu(dz) ds + [M ]^{\ell/2}_{t} \right) < +\infty;$$
\item $Y$ is bounded from below by a process $\bar Y \in \bS^2(0,T)$;
\item for all $0\leq s \leq t<T$:
\begin{eqnarray*}
Y_{s}  =  Y_{t } + \int_{s }^{t}  f(u,Y_u,\psi_u)du -  \int_{s }^{t} \int_\cZ \psi_u(z) \tpi(dz,du) - \int_{s}^{t } dM_u.
\end{eqnarray*}
\item  $ \ds \liminf_{t \to T} Y_{s } \geq \xi$ a.s.
\end{enumerate}
We say that $(Y,\psi,M)$ is a minimal supersolution to the BSDE \eqref{eq:bsde_2} if for any other supersolution $(Y',\psi',M')$ we have $Y_t\le Y'_t$ a.s.\ for any $t\in [0,T)$.
\end{Def}

To establish existence of a minimal supersolution to BSDE \eqref{eq:bsde_2} we impose the following conditions on the driver $f : \Omega \times [0,T] \times \R \times \R^d \to \R$.  For notational convenience we write $f^0_t=f(t,0,0)$.
\begin{description}
\item[A1.] The function $y\mapsto f(t,y,\psi)$ is continuous and monotone: there exists $\chi \in \R$ such that a.s. and for any $t \in [0,T]$ and $\psi \in L^2_\mu$
\begin{equation*}
(f(t,y,\psi)-f(t,y',\psi))(y-y') \leq \chi (y-y')^2.
\end{equation*}
\item[A2.] There exists a progressively measurable process $\kappa = \kappa^{y,\psi,\phi} : \Omega \times \R_+ \times \cZ \to \R$ such that
\begin{equation*}
f(t,y,\psi)-f(t,y,\phi) \leq \int_\cZ (\psi(z)-\phi(z))  \kappa^{y,\psi,\phi}_t(z)  \mu(dz)
\end{equation*}
with $\P\otimes Leb \otimes \mu$-a.e. for any $(y,\psi,\phi)$, $-1 \leq \kappa^{y,\psi,\phi}_t(z)$
and $|\kappa^{y,\psi,\phi}_t(z)| \leq \vartheta(z)$ where $\vartheta \in L^2_\mu$.
\item[A3.] For every $n> 0$ it holds that 
$\sup_{|y|\leq n} |f(t,y,0)-f^0_t| \in L^1((0,T)\times \Omega).$
\item[A4.] The negative parts of $\xi$ and $f^0$ are square integrable: $\xi^- \in L^2(\Omega) \text{ and }(f^0)^- \in L^2((0,T)\times \Omega).$
\end{description}

Conditions \textbf{A1} to \textbf{A4} will ensure existence and uniqueness of the solution for a version of BSDE \eqref{eq:bsde_2}, where the terminal condition $\xi$ is replaced by $\xi \wedge L$ and the generator $f$ by $f^L$ (see \eqref{eq:generator_trunc_BSDE}) for some $L>0$. We obtain the minimal supersolution with singular terminal condition $\xi$ by letting $L$ tend to $\infty$. To ensure that in the limit $L\to \infty$ the solution component $Y$ attains the value $\infty$ on $\cS$ at time $T$ but is finite before time $T$, we have to impose some further growth behavior on $f$. We assume that $f$ decreases at least polynomially in the $y$-variable.
\begin{description}
\item[A5.] There exists a constant $q > 1$ and a positive process $\eta$ such that for any $y \geq 0$
\begin{equation*}
f(t,y,\psi)\leq -\frac{p-1}{\eta^{q-1}_t}|y|^{q} + f(t,0,\psi).
\end{equation*}
$p$ is the H\"older conjugate of $q$. 
\item[A6.] There exists $\ell>1$ such that
$\E \int_0^T \left[\eta_s + (T-s)^p (f^0_s)^+\right]^{\ell} ds < +\infty.$
\item[A7.] There exists $k > \max(\frac{\ell}{\ell-1} , 2)$ such that 
$\int_\cZ |\vartheta(z)|^{k} \mu(dz) < +\infty.$
\end{description}

\noindent \textbf{Assumptions (A).} We say that Assumptions $\textbf{(A)}$ are satisfied if all seven hypotheses \textbf{A1} to \textbf{A7} hold.
\hfill $\diamond$

\begin{Remark}[on A1] \label{rem:monotone_coeff}
We can suppose w.l.o.g.\ that $\chi=0$. Indeed if $(Y,\psi,M)$ is a solution of \eqref{eq:bsde_2} then $(\bar Y,\bar \psi, \bar M)$ with
$$\bar Y_t = e^{\chi t} Y_t, \quad \bar \psi_t  = e^{\chi t} \psi_t, \quad d \bar M_t = e^{\chi t} dM_t $$
satisfies an analogous BSDE with terminal condition $\bar \xi = e^{\chi T}\xi$, and generator
\begin{eqnarray*}
\bar f(t,y,\psi) & = & \left[ e^{\chi t} f(t,e^{-\chi t} y, e^{-\chi t} \psi) - \chi y \right]  
\end{eqnarray*}
and $\bar f$ satisfies the same assumptions with $\chi = 0$. \textbf{\emph{In the rest of this section, we will suppose that $\chi = 0$}}.
\end{Remark}
\begin{Remark}[on A2]\label{rem:f_Lip}
The second condition \emph{\textbf{A2}} implies that $f$ is Lipschitz continuous w.r.t. $\psi$ uniformly in $\omega$, $t$ and $y$. Indeed by Cauchy-Schwarz's inequality
$$f(t,y,\psi) - f(t,y,\phi)  \leq \left|\int_\cZ (\psi(z)-\phi(z))  \kappa^{y,\psi,\phi}_t(z)  \mu(dz) \right| \leq  \|\vartheta\|_{L^2_\mu}  \|\psi-\phi\|_{L^2_\mu}.$$
And conversely since
$$f(t,y,\phi) - f(t,y,\psi)  \leq \int_\cZ (\phi(z)-\psi(z))  \kappa^{y,\phi,\psi}_t(z)  \mu(dz),$$
we obtain
$$f(t,y,\psi) - f(t,y,\phi)  \leq  \|\vartheta\|_{L^2_\mu}  \|\psi-\phi\|_{L^2_\mu}.$$
\end{Remark}
\begin{Remark}[on A5]
It follows from Condition \emph{\textbf{A3}} and \emph{\textbf{A5}} that the process $1/\eta^{q-1}$ must be in $L^1((0,T)\times \Omega)$ 
\begin{equation} \label{eq:cond_1_over_alpha}
\E \int_0^T \frac{1}{\eta^{q-1}_t} dt < +\infty.
\end{equation}
Let us just mention that it is possible to assume only integrability w.r.t. $t$ a.s. in \emph{\textbf{A2}} (see \cite{bria:dely:hu:03}, remark 4.3).
\end{Remark}

In this section, our main result can be summarized as follows.
\begin{Thm}\label{thm:main_thm_1}
Under Assumptions \emph{\textbf{(A)}} there exists a minimal supersolution $(Y,\psi,M)$ to \eqref{eq:bsde_2} with singular terminal condition $Y_T=\xi$.
\end{Thm}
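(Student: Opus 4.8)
The plan is to construct the minimal supersolution by truncation and monotone passage to the limit, which is the strategy announced in the introduction. The starting point is to replace the singular terminal condition $\xi$ by the bounded one $\xi \wedge L$ for $L > 0$, and to replace the driver $f$ by a suitably truncated version $f^L$. Under Assumptions \textbf{A1}--\textbf{A4}, the driver is monotone in $y$ (with $\chi = 0$ after the reduction in Remark \ref{rem:monotone_coeff}) and Lipschitz in $\psi$ (Remark \ref{rem:f_Lip}), while the terminal data $\xi \wedge L$ and the coefficient $f^0$ satisfy the required integrability. Hence I would invoke the existence, uniqueness and comparison results of \cite{krus:popi:14} to obtain, for each $L$, a unique solution $(Y^L,\psi^L,M^L)$ to the truncated BSDE on $[0,T]$ with terminal value $\xi \wedge L$. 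The comparison theorem, applied to the monotone increasing family of terminal conditions $\xi \wedge L$, immediately gives that $L \mapsto Y^L_t$ is nondecreasing for every $t$, so the pointwise limit $Y_t = \lim_{L\to\infty} Y^L_t$ exists in $(-\infty,+\infty]$.

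\emph{The crucial obstacle}, and where the bulk of the work lies, is to show that this limit $Y_t$ is \emph{finite} for every $t < T$, i.e.\ that the approximating sequence does not explode before the terminal time. This is exactly what the growth condition \textbf{A5} is designed to control. The key idea is to derive an a priori upper bound on $Y^L_t$ that is \emph{independent of $L$}. Using \textbf{A5}, one compares $Y^L$ from above with the solution of the scalar ODE-type equation driven by $-\frac{p-1}{\eta^{q-1}}|y|^q$; the prototype $f(y) = -c\,y^q$ admits the explicit supersolution of the form $\bigl(\tfrac{c}{q-1}\bigr)^{-1/(q-1)}(T-t)^{-1/(q-1)} = \text{(const)}\,(T-t)^{-(p-1)}$, which blows up only at $t = T$. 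This is the Keller--Osserman-type estimate referred to in the text. I would make this rigorous by exhibiting an explicit supersolution of the truncated BSDE incorporating the $f^0$ and $\eta$ terms, and then use comparison to conclude $Y^L_t \le C(T-t)^{-(p-1)} + (\text{terms controlled by } f^0, \eta)$, uniformly in $L$. Assumption \textbf{A6} guarantees precisely that the relevant integrals of $\eta$ and $(T-s)^p (f^0_s)^+$ are finite, so that the bound has the right integrability. This is presumably the content of Proposition \ref{prop:upper_bound_Y_L}, which I would assume. The finiteness of $Y_t$ for $t < T$ then follows from this uniform-in-$L$ bound.

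Having secured the uniform a priori bound, the next task is to pass to the limit in the BSDE. I would first establish a priori estimates in the spaces $\bS^\ell$, $L^\ell_\pi$ and $\cM^\ell$ for the triples $(Y^L,\psi^L,M^L)$ on each interval $[0,t]$ with $t < T$, uniform in $L$; here Assumptions \textbf{A6} and \textbf{A7} on the integrability exponents $\ell$ and $k$ enter to control the jump part $\psi^L$ and the orthogonal martingale part $M^L$ via the Lipschitz estimate of Remark \ref{rem:f_Lip} and the bound $|\kappa| \le \vartheta \in L^k_\mu$. The monotone convergence $Y^L \uparrow Y$ together with these estimates yields, along a subsequence, weak limits $\psi$ and $M$ on each $[0,t]$, $t < T$, and one checks that $(Y,\psi,M)$ satisfies the BSDE dynamics (item 3 of Definition \ref{def:sol_sing_BSDE}) on every $[s,t] \subset [0,T)$. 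The lower bound by a process $\bar Y \in \bS^2$ (item 2) comes from comparison with the solution corresponding to $\xi^-$ and $(f^0)^-$, which are square integrable by \textbf{A4}.

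Two points remain. First, the terminal lower-semicontinuity $\liminf_{t\to T} Y_t \ge \xi$ a.s.\ (item 4): on $\cS = \{\xi = +\infty\}$ this follows from the monotone lower bound $Y_t \ge Y^L_t$ and letting $t \to T$ then $L \to \infty$, using the terminal condition $Y^L_T = \xi \wedge L$ of the approximants and the quasi-left-continuity of $\bF$ (which, as noted in the introduction, is invoked precisely here to control the behaviour at $T$); on $\cS^c$ it follows from the convergence of $Y^L$ together with the a priori estimates. Second, \emph{minimality}: given any supersolution $(Y',\psi',M')$, its truncation at level $L$ dominates $Y^L$ by the comparison theorem applied on $[0,t]$ (any supersolution has $Y'_T \ge \xi \ge \xi \wedge L$ in the limsup sense), whence $Y'_t \ge Y^L_t$ for all $L$ and thus $Y'_t \ge Y_t$ for every $t < T$. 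I expect the delicate technical step, beyond the Keller--Osserman estimate, to be the convergence of the nonlinear term $f(u,Y^L_u,\psi^L_u)$ under only the weak integrability afforded by \textbf{A6}, which is the reason the authors postpone details to the appendix; handling the jump term $\psi^L$ with the monotone $y$-limit requires care to pass to the limit inside $\int_\cZ$ using \textbf{A2} and the uniform $L^k_\mu$ bound on $\vartheta$.
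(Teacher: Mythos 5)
Your overall strategy (truncation, monotone limit, a uniform-in-$L$ a priori bound from \textbf{A5}--\textbf{A7}, quasi-left continuity for the terminal inequality, comparison for minimality) is exactly the paper's, which splits it into Propositions \ref{prop:exist_solution_trunc_BSDE}, \ref{prop:upper_bound_Y_L}, \ref{thm:exists_sing_sol} and \ref{prop:minimality}. However, two of the steps, as you propose to carry them out, have genuine gaps. First, the limit passage: you propose to extract \emph{weak} limits of $(\psi^L,M^L)$ along a subsequence and then ``check'' the dynamics. With only weak convergence of $\psi^L$ you cannot pass to the limit in the nonlinear term $\int f(u,Y^L_u,\psi^L_u)\,du$, since $f$ is Lipschitz but not linear in $\psi$, and weak convergence gives no control on a nonlinear functional of $\psi^L$. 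The paper avoids this by proving that $(Y^L,\psi^L,M^L)$ is a \emph{Cauchy} sequence in $\bS^\ell(0,t)\times L^\ell_\pi(0,t)\times \cM^\ell(0,t)$ for every $t<T$ (Proposition \ref{thm:exists_sing_sol}, resting on the It\^o-formula estimate of Lemma \ref{lem:appendix_5}): the difference of the truncated generators reduces to $|f^0_u\wedge N-f^0_u\wedge L|$, which tends to $0$ in $L^\ell$ by \textbf{A6}, and monotonicity plus the Lipschitz condition \textbf{A2} absorb the rest. The resulting \emph{strong} convergence of $\psi^L$ is what allows passing to the limit in the driver; you flagged this step as delicate but the device you propose would not close it. (A smaller misattribution: \textbf{A7} is used to bound the moments of the stochastic exponential $V^{\eps,L}$ in the a priori estimate \eqref{eq:a_priori_estimate_Y_L}, not to control $\psi^L$ and $M^L$ in the limit passage, where only $\|\vartheta\|_{L^2_\mu}$ enters.)

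Second, minimality. You assert $Y'_t\geq Y^L_t$ ``by the comparison theorem applied on $[0,t]$''. This is circular: a comparison theorem on $[0,t]$ requires the ordering of the two processes \emph{at the endpoint} $t$, which is precisely what is to be proved. The ordering of the data is only available at $T$, in the $\liminf$ sense, while the supersolution $Y'$ satisfies the equation only strictly before $T$ and need not belong, up to $T$, to the spaces in which the comparison result of \cite{krus:popi:14} applies. The paper's Proposition \ref{prop:minimality} instead linearizes the difference $\what Y = Y'-Y^L$, represents it through an adjoint process $\Gamma_{s,t}$ (Lemma 10 in \cite{krus:popi:14} and Lemma 4.1 in \cite{quen:sule:13}), and lets $t\nearrow T$ using Fatou's lemma; Fatou is legitimate because $\what Y_t\Gamma_{s,t}$ is bounded from below by a suitably integrable process, thanks to the deterministic bound $Y^L\leq (1+T)L$ and the $\bS^2$ lower bound on $Y'$, and then $\liminf_{t\to T}\what Y_t\geq \xi-\xi\wedge L\geq 0$ concludes. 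Without this (or an equivalent) limiting argument at the singular time, your minimality step does not go through.
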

To prove Theorem \ref{thm:main_thm_1} we proceed as in \cite{anki:jean:krus:13} by truncation. The complete statement and the proof of this result is divided into Propositions \ref{prop:exist_solution_trunc_BSDE}, \ref{prop:upper_bound_Y_L}, \ref{thm:exists_sing_sol} and \ref{prop:minimality}. For any $L \geq 0$ we consider the BSDE
\begin{equation} \label{eq:truncated_bsde}
dY^L_t = -  f^L(t,Y^L_t,\psi^L_t) dt + \int_\cZ \psi^L_t(z) \tpi(dz,dt) +dM^L_t
\end{equation}
with bounded terminal condition $Y^L_T = \xi \wedge L$ and where
\begin{equation}\label{eq:generator_trunc_BSDE}
f^L(t,y,\psi) = ( f(t,y,\psi)-f^0_t ) + f^0_t \wedge L.
\end{equation}
\begin{Prop} \label{prop:exist_solution_trunc_BSDE}
Under Assumptions \emph{\textbf{(A)}}, there exists for every $L>0$ a unique solution $(Y^L,\psi^L,M^L)$ to \eqref{eq:truncated_bsde} with $Y^L\in \bS^2(0,T)$, $\psi^L \in L^2_\pi(0,T)$, $M^L \in \cM^2(0,T)\cap \cM^\perp$.
Moreover there exists a process $\bar Y$ in $\bS^2(0,T)$, independent of $L$, such that a.s. for any $t \in[0,T]$, $\bar Y_t \leq Y^L_t$.  If $(f^0_t)^- = \xi^-=0$, then $\bar Y_t =0$, and $Y^L_t$ is non negative.
\end{Prop}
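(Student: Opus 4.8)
The plan is to establish existence and uniqueness of the truncated BSDE \eqref{eq:truncated_bsde} by verifying that its generator $f^L$ satisfies the hypotheses of the general existence and uniqueness theorem in \cite{krus:popi:14}, and then to produce the $L$-independent lower bound $\bar Y$ via a comparison argument. The key observation is that the truncation in \eqref{eq:generator_trunc_BSDE} only modifies the $f^0$-part of the generator: since $f^L(t,y,\psi) = (f(t,y,\psi) - f^0_t) + f^0_t \wedge L$, the map $y \mapsto f^L(t,y,\psi)$ inherits the monotonicity from \textbf{A1} (the added term $f^0_t \wedge L$ does not depend on $y$), and $\psi \mapsto f^L(t,y,\psi)$ inherits the Lipschitz property from \textbf{A2}/Remark~\ref{rem:f_Lip}. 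Hence \textbf{A1} and \textbf{A2} transfer verbatim to $f^L$ with the same constants $\chi$ (which we take to be $0$ by Remark~\ref{rem:monotone_coeff}) and $\|\vth\|_{L^2_\mu}$.

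\medskip
\noindent\textbf{Existence and uniqueness.} First I would check the integrability of the truncated ``free term.'' We have $f^L(t,0,0) = f^0_t \wedge L$, which is bounded above by $L$ and below by $-(f^0_t)^-$; by \textbf{A4} the negative part $(f^0)^-$ is in $L^2((0,T)\times\Omega)$, so $f^L(\cdot,0,0) \in L^2((0,T)\times \Omega)$. Combined with $\xi \wedge L \in L^2(\Omega)$ (which follows since $\xi \wedge L \leq L$ is bounded above and $(\xi\wedge L)^- \leq \xi^- \in L^2$ by \textbf{A4}), the local growth/continuity condition \textbf{A3} applied to $f^L$, and the monotone-plus-Lipschitz structure, the hypotheses of the theorem in \cite{krus:popi:14} are met. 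That theorem then delivers a unique solution $(Y^L,\psi^L,M^L)$ with $Y^L\in \bS^2(0,T)$, $\psi^L\in L^2_\pi(0,T)$ and $M^L\in\cM^2(0,T)\cap\cM^\perp$.

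\medskip
\noindent\textbf{The $L$-independent lower bound.} The core of the statement is the uniform lower bound $\bar Y \le Y^L$. The plan is to define $\bar Y$ as the solution of a companion linear BSDE that lies below every $f^L$. Since $f^L(t,y,\psi) \ge (f(t,y,\psi)-f^0_t) - (f^0_t)^-$ (using $f^0_t\wedge L \ge -(f^0_t)^-$ for all $L\ge 0$), and since \textbf{A1} with $\chi=0$ plus \textbf{A5} control the $y$-dependence from below while Remark~\ref{rem:f_Lip} controls the $\psi$-dependence, I would let $\bar Y$ solve the BSDE with terminal condition $-\xi^-$ and a generator dominated by all $f^L$ — concretely, a generator linear in $y$ (via the monotonicity constant, here $0$) and in $\psi$ (via $\vth$), driven by the square-integrable free term $-(f^0_t)^-$. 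By \textbf{A4} both $-\xi^- \in L^2(\Omega)$ and $-(f^0)^- \in L^2((0,T)\times\Omega)$, so this companion BSDE falls squarely within the Lipschitz theory of \cite{krus:popi:14} and admits a unique solution $\bar Y \in \bS^2(0,T)$, independent of $L$. The comparison theorem from \cite{krus:popi:14} — valid precisely under the monotone-Lipschitz framework — then yields $\bar Y_t \le Y^L_t$ a.s.\ for all $t$, since the terminal data satisfy $-\xi^- \le \xi\wedge L$ and the generators are ordered.

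\medskip
\noindent\textbf{The non-negativity case and main obstacle.} When $(f^0)^- = \xi^- = 0$, the companion BSDE has zero terminal data and, by \textbf{A5} (with $f(t,0,\psi)\le f(t,0,0)=f^0_t$ and $f^0\ge 0$), a generator dominated below by one for which the zero process is a subsolution; the comparison argument then forces $\bar Y_t = 0$ and hence $Y^L_t \ge 0$. The main obstacle I anticipate is verifying, with full rigour, that the comparison theorem of \cite{krus:popi:14} applies in this general filtration with the orthogonal martingale part $M$ present and with only the monotone (not Lipschitz) $y$-dependence — in particular ensuring that the difference $Y^L - \bar Y$ can be handled by an Itô/Tanaka expansion in which the jump and $\cM^\perp$ terms vanish in expectation and the monotonicity inequality closes the estimate. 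The correct choice of dominating linear generator for $\bar Y$, so that both the ordering of generators and the integrability in \textbf{A4} hold simultaneously, is the delicate point.
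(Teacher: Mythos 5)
Your existence-and-uniqueness step is correct and is exactly the paper's argument: $f^L$ inherits \textbf{A1}--\textbf{A3} from $f$, while $f^L(t,0,0)=f^0_t\wedge L\in L^2((0,T)\times\Omega)$ and $\xi\wedge L\in L^2(\Omega)$ follow from \textbf{A4}, so Theorem 1 of \cite{krus:popi:14} applies. The gap is in the lower bound. You correctly record $f^L(t,y,\psi)\ge (f(t,y,\psi)-f^0_t)-(f^0_t)^-$, but then you require the companion generator to be \emph{linear}: ``linear in $y$ (via the monotonicity constant, here $0$) and in $\psi$ (via $\vth$)''. No such generator can be dominated by all $f^L$, neither pointwise nor along the trajectory of $(Y^L,\psi^L)$: by \textbf{A5}, $f^L(t,y,\psi)\le -\frac{p-1}{\eta_t^{q-1}}\,y^q+f^L(t,0,\psi)$ for $y\ge 0$, so $f^L$ decays super-linearly as $y\to+\infty$, whereas a generator that is constant in $y$ (coefficient $\chi=0$) or linear in $y$ does not; and nothing keeps $Y^L_t$ from being large and positive --- the whole construction is designed so that $Y^L$ blows up on $\cS$ as $L\to\infty$, so the required generator ordering fails exactly where it matters. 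Your justification that ``\textbf{A1} with $\chi=0$ plus \textbf{A5} control the $y$-dependence from below'' is backwards: both assumptions give \emph{upper} bounds on $f$ for $y\ge 0$; the hypotheses impose no lower bound in $y$ at all. A secondary problem: a Lipschitz-in-$\psi$ term such as $-\|\vth\|_{L^2_\mu}\|\psi\|_{L^2_\mu}$ need not admit the representation demanded by \textbf{A2} with $\kappa\ge -1$, and that structural condition is precisely what validates comparison for BSDEs with jumps; so even trajectory-wise comparison along $\bar Y$ is not licensed for your generator.

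The repair is to abandon linearity and use the lower bound you already wrote down as the companion generator itself; this is what the paper does. Set $\zeta=-\xi^-$ and $g(t,y,\psi)=(f(t,y,\psi)-f^0_t)-(f^0_t)^-$. This $g$ is nonlinear but satisfies \textbf{A1}, \textbf{A2}, \textbf{A3} exactly as $f$ does, and $g(t,0,0)=-(f^0_t)^-\in L^2((0,T)\times\Omega)$, $\zeta\in L^2(\Omega)$ by \textbf{A4}; hence Theorem 1 of \cite{krus:popi:14} yields a unique solution $(\bar Y,\bar\psi,\bar M)$ with $\bar Y\in\bS^2(0,T)$, manifestly independent of $L$. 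Since $f^0_t\wedge L\ge -(f^0_t)^-$ gives $g\le f^L$ in all variables and $-\xi^-\le\xi\wedge L$, Proposition 4 of \cite{krus:popi:14} applies (both generators carry the monotone--Lipschitz structure with the \textbf{A2} condition) and yields $\bar Y_t\le Y^L_t$ a.s. Finally, if $(f^0)^-=\xi^-=0$, then $\zeta=0$ and $g(t,0,0)=0$, so $(0,0,0)$ solves the companion BSDE and uniqueness forces $\bar Y\equiv 0$, whence $Y^L\ge 0$; this is cleaner than your subsolution argument, which moreover invokes $f(t,0,\psi)\le f(t,0,0)$, an inequality not implied by \textbf{A5} (that assumption says nothing about the $\psi$-dependence).
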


\begin{proof}
From assumptions \textbf{A1}, \textbf{A2} and \textbf{A4}, it follows that $f^L$ is monotone w.r.t. $y$, Lipschitz continuous w.r.t. $\psi$, and $f^L(t,0,0) = f^0_t \wedge L \in L^2((0,T)\times \Omega)$. Moreover for every $n > 0$ and $|y|\leq n$:
\begin{eqnarray*}
|f^L(t,y,0) - f^L(t,0,0)| & = & |f(t,y,0)-f^0_t| \leq \sup_{|y|\leq n} |f(t,y,0)-f^0_t|.
\end{eqnarray*}
By Assumption \textbf{A3}, the mapping $t\mapsto \sup_{|y|\leq n} |f(t,y,0)-f^0_t|$ is in $L^1((0,T)\times \Omega)$. From Theorem 1 in \cite{krus:popi:14} it follows that there exists a unique solution $(Y^{L},\psi^{L},M^{L})$ to \eqref{eq:truncated_bsde} with terminal condition $\xi \wedge L$. This solution satisfies
$$\E \left[ \sup_{0\leq t \leq T} |Y^{L}_t|^2 + \int_0^T \int_\cZ (\psi^{L}_t(z))^2 \mu(dz) dt +[ M^{L}]_T \right] < +\infty.$$
Next, we construct the lower bound $\bar Y$. Let us take $\zeta= -\xi^-$ and $g(t,y,\psi) = ( f(t, y,\psi)-f^0_t) -(f^0_t)^- .$ The solution $(\bar Y,\bar \psi, \bar M)$ with $\bar Y \in \bS^2(0,T)$ of the BSDE with data $(\zeta,g)$ does not depend on $L$, and by comparison (Proposition 4 in \cite{krus:popi:14}) we have $\bar Y_t \leq Y^L_t$ a.s. for any $t \in [0,T]$. 
\end{proof}

Next, we derive an upper bound for the family $Y^L$ which is independent of $L$.

\begin{Prop}\label{prop:upper_bound_Y_L}
For every $t\in [0,T]$ the random variable $Y^L_t$ is bounded from above by $L(1+T)$ and for $t \in [0,T)$ the following estimate holds:
\begin{equation}\label{eq:a_priori_estimate_Y_L}
Y^{L}_t \leq \frac{K_{\vartheta}}{(T-t)^p} \left[ \E \left( \ \int_t^{T} \left( \eta_s + (T-s)^p (f^0_s)^+ \right)^{\ell} ds \bigg| \F_t\right) \right]^{1/\ell}
\end{equation}
where $K_{\vartheta}$ is a constant depending only on $\vartheta$.
\end{Prop}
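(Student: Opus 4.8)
The plan is to reduce the estimate to a scalar, linear comparison by combining the Lipschitz structure in $\psi$ (Assumption \textbf{A2}) with an equivalent change of measure, and then to implement a Keller--Osserman argument in which the polynomial dissipation of Assumption \textbf{A5} is linearized through Young's inequality. First I would linearize in the jump variable: by \textbf{A2}, writing $\kappa_s(z)=\kappa^{Y^L_s,\psi^L_s,0}_s(z)$ with $-1\le\kappa_s\le\vartheta$, one has $f^L(s,Y^L_s,\psi^L_s)\le f^L(s,Y^L_s,0)+\int_\cZ\psi^L_s(z)\kappa_s(z)\mu(dz)$. Since $\kappa_s\ge -1$, the Dol\'eans-Dade exponential of $\int_0^\cdot\int_\cZ\kappa_s(z)\tpi(dz,ds)$ defines an equivalent measure $\Q$ under which $\int\psi^L\,\tpi-\int\psi^L\kappa\,\mu\,ds$ is a local martingale and, by orthogonality, $M^L$ remains one. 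Taking $\Q$-conditional expectations in \eqref{eq:truncated_bsde} then yields, for $t\le s<T$, the scalar inequality $Y^L_t\le\E^\Q[\,\xi\wedge L+\int_t^T \hat f_r\,dr\mid\F_t\,]$ with $\hat f_r\le -\frac{p-1}{\eta_r^{q-1}}((Y^L_r)^+)^q+(f^0_r)^+$ by \textbf{A5} (working with the positive part to cover $\{Y^L<0\}$, where the claimed nonnegative bound is automatic). The crude bound $Y^L_t\le L(1+T)$ follows at once by dropping the dissipation and using $f^0\wedge L\le L$.

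For the sharp estimate the key is to not discard the dissipative term but to convert it into a linear feedback. For any $\lambda_r>0$, Young's inequality applied to $Y^L_r=\big(Y^L_r/(\lambda_r\eta_r)^{(q-1)/q}\big)\cdot(\lambda_r\eta_r)^{(q-1)/q}$, together with $(q-1)(p-1)=1$ and $(p-1)q=p$, gives $\frac{p-1}{\eta_r^{q-1}}((Y^L_r)^+)^q\ge c_r\,Y^L_r-g^\eta_r$ with $c_r=(p-1)q\,\lambda_r^{q-1}\ge0$ and $g^\eta_r=\frac{q}{p}(p-1)\lambda_r^q\eta_r$. Substituting, $Y^L$ satisfies a linear inequality with nonnegative discount rate $c_r$ and source $g_r=g^\eta_r+(f^0_r)^+$; solving it with the integrating factor $\exp(\int_t^r c_s\,ds)$ produces $Y^L_t\le\E^\Q[\,e^{-\int_t^T c_r dr}\,(\xi\wedge L)+\int_t^T e^{-\int_t^r c_s ds}\,g_r\,dr\mid\F_t\,]$. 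I would choose $\lambda_r=\theta(T-r)^{-(p-1)}$, so that $c_r=A/(T-r)$ with $A=(p-1)q\,\theta^{q-1}$ and $\int_t^T c_r\,dr=+\infty$; the terminal contribution $e^{-\int_t^T c_r dr}(\xi\wedge L)$ then vanishes for every $t<T$, uniformly in $L$. This is precisely the Keller--Osserman mechanism: the dissipation makes the discount factor blow up and erases the terminal value, which is why the bound is independent of $L$ and of $\xi$.

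It then remains to read off the announced form. With this choice $e^{-\int_t^r c_s ds}=((T-r)/(T-t))^A$ and $g^\eta_r$ carries the weight $(T-r)^{-p}$ (again from $(p-1)q=p$), so that, setting $G_r=\eta_r+(T-r)^p(f^0_r)^+$, one obtains $Y^L_t\le \frac{C}{(T-t)^A}\E^\Q[\int_t^T (T-r)^{A-p}G_r\,dr\mid\F_t]$ for a constant $C$ depending on $p,\theta$. Choosing $A>p-1+1/\ell$ and applying the conditional H\"older inequality in the time variable with exponents $\ell$ and $\ell/(\ell-1)$ integrates the power $(T-r)^{(A-p)\ell/(\ell-1)}$ to a finite constant times $(T-t)^{A-p+1-1/\ell}$; after conditional Jensen (concavity of $x\mapsto x^{1/\ell}$) this collapses to $Y^L_t\le \frac{C'}{(T-t)^p}\big(\E^\Q[\int_t^T G_r^\ell\,dr\mid\F_t]\big)^{1/\ell}$, the spurious power of $(T-t)$ being absorbed into the constant since $p-1+1/\ell\le p$. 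Finally I would return from $\Q$ to $\P$: writing $\E^\Q[\cdot\mid\F_t]=\cE_t^{-1}\E[\cE_T\,\cdot\mid\F_t]$ and applying H\"older once more, the factor $\E[(\cE_T/\cE_t)^{\ell/(\ell-1)}\mid\F_t]$ is bounded by a deterministic constant depending only on $\vartheta$, thanks to the integrability $\vartheta\in L^k_\mu$ with $k>\ell/(\ell-1)$ from \textbf{A7}; this produces the constant $K_\vartheta$ and leaves $\big(\E[\int_t^T G_r^\ell\,dr\mid\F_t]\big)^{1/\ell}$, whose finiteness is guaranteed by \textbf{A6}.

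The main obstacle is the nonlinearity: the dissipation $-\eta^{-(q-1)}(Y^L)^q$ is what makes the estimate possible, yet it prevents any naive Gronwall argument because $Y^L$ appears on both sides with the wrong power. The decisive step is therefore the Young linearization with the $r$-dependent weight $\lambda_r=\theta(T-r)^{-(p-1)}$, tuned so that the resulting discount rate is exactly of order $(T-r)^{-1}$ and hence non-integrable up to $T$: this is what kills the terminal data and yields a bound independent of $L$. Secondary technical points, to be handled carefully, are the passage to the positive part on $\{Y^L<0\}$ (where \textbf{A5} does not apply directly but the target bound is trivially satisfied) and the uniform, $L$-independent control of the exponential density under the change of measure, for which quasi-left-continuity of $\bF$ and the moment assumption \textbf{A7} on $\vartheta$ are used.
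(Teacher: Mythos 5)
Your overall mechanism is the same as the paper's (Young's inequality with a weight of order $(T-r)^{-(p-1)}$ turning the dissipation into a discount rate of order $p/(T-r)$ whose non-integrability erases the terminal data, a stochastic exponential to absorb the jump-Lipschitz term, then H\"older with \textbf{A6}/\textbf{A7}), but there is a genuine gap at your very first reduction. You claim $Y^L_t\le\E^\Q[\xi\wedge L+\int_t^T\hat f_r\,dr\mid\F_t]$ with $\hat f_r\le-\frac{p-1}{\eta_r^{q-1}}((Y^L_r)^+)^q+(f^0_r)^+$, asserting that on $\{Y^L_r<0\}$ "the claimed nonnegative bound is automatic". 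It is not: what is automatic is the final estimate on $Y^L_t$ at the fixed time $t$ when $Y^L_t<0$, whereas your inequality requires the pointwise driver bound $\hat f_r=f^L(r,Y^L_r,0)\le (f^0_r)^+$ at every intermediate time $r\in(t,T)$ on the event $\{Y^L_r<0\}$, inside the conditional expectation. Assumption \textbf{A5} only applies for $y\ge 0$, and monotonicity \textbf{A1} (with $\chi=0$) goes in the wrong direction for $y<0$: it gives $f(r,y,0)\ge f(r,0,0)$, with no upper control. Concretely, for $f(t,y,\psi)=-(p-1)y|y|^{q-1}/\eta_t^{q-1}$ one has $f^L(r,y,0)=(p-1)|y|^q/\eta_r^{q-1}+f^0_r\wedge L$ for $y<0$, which exceeds $(f^0_r)^+$ by an arbitrarily large amount. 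So the inequality you feed into the integrating-factor step is false precisely on the set you dismissed; the only available controls there (\textbf{A3}, or the crude bound $|Y^L|\le L(1+T)$) are $L$-dependent and would destroy the uniformity in $L$ that is the whole point.

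The paper avoids this by never linearizing along $Y^L$: it introduces an auxiliary BSDE on $[0,T-\eps]$ with driver $h(t,y,\psi)=b^L_t-\frac p{T-t}y+f(t,0,\psi)$ and nonnegative terminal value $Y^{L,+}_{T-\eps}$, shows its solution $\cY^{\eps,L}$ is nonnegative, verifies $f^L(t,\cY^{\eps,L}_t,\phi^{\eps,L}_t)\le h(t,\cY^{\eps,L}_t,\phi^{\eps,L}_t)$ by Young's inequality (legitimate because $\cY^{\eps,L}\ge 0$, so \textbf{A5} applies there), and concludes $Y^L\le\cY^{\eps,L}$ from the comparison theorem; the linear representation of $\cY^{\eps,L}$ then yields the bound. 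Your route could be repaired, for instance by applying the Tanaka--Meyer formula to $(Y^L)^+$, so that the indicator $\1_{Y^L_{s^-}>0}$ multiplies the driver (where \textbf{A5} does apply) as well as the jump and martingale integrals, and then running your Girsanov/integrating-factor argument for $(Y^L)^+$; but as written the step fails. Two smaller points: (i) your order "Jensen, then change of measure plus H\"older" produces $\E[(\int_t^TG_r^\ell dr)^{\ell}\mid\F_t]^{1/\ell^2}$, a higher moment that \textbf{A6} does not control --- H\"older must be applied under $\P$ jointly in $(r,\omega)$ against the density process (this is exactly the role of $V^{\eps,L}$ and of Proposition A.1 of \cite{quen:sule:13} in the paper), after which no Jensen step is needed; (ii) the true-martingale property of the Dol\'eans-Dade exponential (hence the existence of $\Q$) comes from \textbf{A2}/\textbf{A7}, not from $\kappa\ge-1$ alone, and quasi-left continuity of $\bF$ plays no role in this proposition --- it is used later for the behaviour of $Y$ at the terminal time.
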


\begin{proof}
Let us first consider the triple $(A_t,B_t,C_t) = (L(1+(T-t)),0,0)$. It is the solution of the BSDE with terminal condition $L$ and constant generator equal to $L$. By assumption \textbf{A1}, $f$ is monotone and hence it holds that $f(t,A_t,B_t) \leq f^0_t$. Thus by the definition \eqref{eq:generator_trunc_BSDE} of $f^L$ we have $f^L(t,A_t,B_t) \leq L.$
By the comparison principle (Proposition 4 in \cite{krus:popi:14}) we obtain $Y^{L}_t  \leq A_t \leq L(T+1)$ a.s. for any $t\in [0,T]$,

This upper bound depends on $L$. Next, we verify \eqref{eq:a_priori_estimate_Y_L}.
We consider the driver
\begin{eqnarray*}
h(t,y,\psi) & = &b^L_t - p\frac{1}{T-t} y + f(t,0,\psi).
\end{eqnarray*}
with $b^L_t =  \frac{\eta_t}{(T-t)^p} + ((f^0_t)^+ \wedge L)$.
Let $\eps > 0$ and denote by $(\cY^{\eps,L},\phi^{\eps,L},N^{\eps,L})$ the solution process of the BSDE on $[0,T-\eps]$ with driver $h$ and terminal condition $\cY^{\eps,L}_{T-\eps}=Y^{L,+}_{T-\eps} \geq 0$. Recall that
$$f(t,0,\psi) \leq \int_\cZ \psi(z) \kappa^{0,\psi,0}_t(z)  \mu(dz).$$
Hence by a comparison argument with the solution for linear BSDE (see \cite{quen:sule:13}, Lemma 4.1) we have
$$\cY^{\eps,L}_t \leq \E \left[ \Gamma_{t,T-\eps} Y^{L,+}_{T-\eps} + \int_t^{T-\eps} \Gamma_{t,s} b^L_s ds \bigg| \F_t\right]$$
where for $t \leq s \leq T-\eps$
$$\Gamma_{t,s} = \exp\left( -\int_t^s \frac{p}{T-u} du \right)V^{\eps,L}_{t,s} =\left( \frac{T-s}{T-t} \right)^{p} V^{\eps,L}_{t,s}$$
and
\begin{equation}\label{eq:auxil_proc_V}
V^{\eps,L}_{t,s} = 1+ \int_t^s \int_{\cZ}  V^{\eps,L}_{t,u^-} \kappa^{0,\phi^{\eps,L},0}_u(z) \tpi(dz,du).
\end{equation}
Hence
$$\cY^{\eps,L}_t \leq \frac{1}{(T-t)^p} \E \left[\eps^\rho V^{\eps,L}_{t,T-\eps} Y^{L,+}_{T-\eps} + \int_t^{T-\eps} V^{\eps,L}_{t,s} (T-s)^p b^L_s  ds \bigg| \F_t\right].$$
Since $b^L \geq 0$ it holds that $\cY^{\eps,L}_t  \geq 0$ a.s. for every $t \in [0,T]$. Hence from Condition \textbf{A5}
$$f^L(t,\cY^{\eps,L}_t,\phi^{\eps,L}_t) \leq - \frac{p-1}{\eta^{q-1}_t } (\cY^{\eps,L}_t)^{q}+ f^L(t,0,\phi^{\eps,L}_t).$$
It follows that
\begin{eqnarray*}
f^L(t,\cY^{\eps,L}_t,\phi^{\eps,L}_t) & \leq & h(t,\cY^{\eps,L}_t,\phi^{\eps,L}_t)-\frac{p-1}{\eta^{q-1}_t } (\cY^{\eps,L}_t)^{q} -\frac{a_t^{p-1}}{(T-t)^p}+\frac{p}{T-t} \cY^{\eps,L}_t \\
&\leq& h(t,\cY^{\eps,L}_t,\phi^{\eps,L}_t),
\end{eqnarray*}
where we used the Young inequality: $c^p+(p-1)y^q-p cy\ge 0$ which holds for all $c,y\ge 0$. The comparison theorem implies $Y^{L}_t \leq \cY^{\eps,L}_t$ for all $t\in[0,T-\eps]$ and $\eps > 0$.

Recall once again from Condition \textbf{A7} that $V^{\eps,L}_{t,.}$ belongs to $\bH^k(0,T-\eps)$ for $k\geq 2$.
From the upper bound $Y^{L}_t  \leq A_t \leq L(T+1)$ and from the integrability property of $V^{\eps,L}_{t,.}$, with dominated convergence, by letting $\eps \downarrow 0$ we obtain a.s.
$$\E \left[\eps^p V^{\eps,L}_{t,T-\eps} Y^{L,+}_{T-\eps}  \bigg| \F_t\right] \longrightarrow 0.$$
From Assumption \textbf{A7} and by the proof of Proposition A.1 in \cite{quen:sule:13}, there exists a constant $K_{\vartheta}$ such that a.s.
$$\E \left[\int_t^{T-\eps} (V^{\eps,L}_{t,s})^{k}  ds\bigg| \F_t\right] \leq K_{\vartheta}.$$
From Assumption \textbf{A6}, it follows that the process $((T-t)^p b^L_t, \ 0\leq t \leq T)$ belongs to $\bH^{\ell}(0,T)$. Therefore by H\"older inequality we obtain
\begin{eqnarray*}
&& \E \left[\int_t^{T-\eps} V^{\eps,L}_{t,s}(T-s)^p b^L_s  ds \bigg| \F_t\right] \leq K_\vartheta \E \left[\int_t^{T} ((T-s)^p b^L_s)^{\ell}  ds \bigg| \F_t\right]^{1/\ell}.
\end{eqnarray*}
Hence we can pass to the limit as $\eps \downarrow 0$
\begin{eqnarray*}
Y^{L}_t & \leq & \frac{K_\vartheta}{(T-t)^p} \E \left[ \ \int_t^{T}  ((T-s)^p b^L_s)^{\ell} ds \bigg| \F_t\right]^{1/\ell}.
\end{eqnarray*}
Assumption \textbf{A6} implies by monotone convergence for $L \to \infty$
\begin{eqnarray*}
Y^{L}_t & \leq &  \frac{K_\vartheta}{(T-t)^p} \E \left[ \ \int_t^{T} \left( \eta_s + (T-s)^p (f^0_s)^+ \right)^{\ell} ds \bigg| \F_t\right]^{1/\ell} <+\infty
\end{eqnarray*}
Thus we obtain the upper bound in \eqref{eq:a_priori_estimate_Y_L}.
\end{proof}

The constants $K_\vartheta$ and $\ell> 1$ in \eqref{eq:a_priori_estimate_Y_L} come from the growth condition on $f$ w.r.t. $\psi$ and from the lack of an estimate of $\psi^L$ independent of $L$. If we assume that $f(t,0,\psi)$ is bounded, then we can obtain a simpler estimate. 
\begin{Lemma} \label{lem:another_estimate}
If there exists a non negative process $K^f_t$ such that a.s. for any $t$ and $\psi$,
\begin{equation}\label{eq:f_growth_psi_2} 
f(t,0,\psi) \leq K^f_t, \quad  \mbox{with} \quad \E \int_0^T (T-s)^p K^f_s ds < +\infty
\end{equation}
then
\begin{equation}\label{eq:a_priori_estimate_Y_L_2}
Y^{L}_t  \leq \frac{1}{(T-t)^p} \E \left[ \ \int_t^{T} \left(\eta_s +2 (T-s)^p K^f_s \right) ds \bigg| \F_t\right].
\end{equation}
\end{Lemma}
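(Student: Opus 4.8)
The plan is to run the same comparison-and-limit scheme as in the proof of Proposition \ref{prop:upper_bound_Y_L}, but to exploit the stronger hypothesis \eqref{eq:f_growth_psi_2} in order to dispense with the $\psi$-dependence of the comparison driver. Since now $f(t,0,\psi)\le K^f_t$ for a process $K^f$ that does not depend on $\psi$, I no longer need to carry the term $f(t,0,\psi)$ through the comparison (as was done in Proposition \ref{prop:upper_bound_Y_L} via the linear-BSDE formula of \cite{quen:sule:13}). Consequently the auxiliary discount process degenerates to the deterministic factor $\Gamma_{t,s}=\big((T-s)/(T-t)\big)^p$, the martingale correction $V^{\eps,L}$ of \eqref{eq:auxil_proc_V} disappears, and Assumption \textbf{A7} is no longer invoked.

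Concretely, I would keep the crude bound $Y^L_t\le L(1+T)$ already established in Proposition \ref{prop:upper_bound_Y_L}, fix $\eps>0$, and introduce on $[0,T-\eps]$ the $\psi$-independent driver
\begin{equation*}
h(t,y)=b_t-\frac{p}{T-t}\,y,\qquad b_t=\frac{\eta_t}{(T-t)^p}+\big((f^0_t)^+\wedge L\big)+K^f_t,
\end{equation*}
and let $(\cY^{\eps,L},\phi^{\eps,L},N^{\eps,L})$ solve the corresponding BSDE with terminal value $\cY^{\eps,L}_{T-\eps}=Y^{L,+}_{T-\eps}$. Because $b_t\ge 0$ and the terminal value is non negative, the linear representation yields $\cY^{\eps,L}_t\ge 0$; and since $h$ has no $\psi$-term the discount factor is the deterministic $\Gamma_{t,s}$ above, so that
\begin{equation*}
\cY^{\eps,L}_t=\frac{1}{(T-t)^p}\,\E\left[\eps^p\,Y^{L,+}_{T-\eps}+\int_t^{T-\eps}(T-s)^p b_s\,ds\,\bigg|\,\F_t\right].
\end{equation*}
The comparison step is then algebraic: for $y\ge 0$, Assumption \textbf{A5} together with \eqref{eq:f_growth_psi_2} gives $f^L(t,y,\psi)\le -\tfrac{p-1}{\eta_t^{q-1}}y^q+K^f_t-f^0_t+f^0_t\wedge L$, while the same Young inequality $c^p+(p-1)y^q-pcy\ge 0$ used in Proposition \ref{prop:upper_bound_Y_L} shows $-\tfrac{p-1}{\eta_t^{q-1}}y^q+\tfrac{p}{T-t}y\le \tfrac{\eta_t}{(T-t)^p}$; since $-f^0_t+f^0_t\wedge L\le 0$ this yields $f^L(t,y,\psi)\le h(t,y)$, and comparison (Proposition 4 in \cite{krus:popi:14}) gives $Y^L_t\le \cY^{\eps,L}_t$ on $[0,T-\eps]$.

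It remains to let $\eps\downarrow 0$ and to read off the factor $2$. The boundary term is controlled by the crude bound, $\eps^p Y^{L,+}_{T-\eps}\le \eps^p L(1+T)\to 0$, while monotone convergence handles the integral. The factor $2$ appears exactly here: taking $\psi=0$ in \eqref{eq:f_growth_psi_2} gives $f^0_t=f(t,0,0)\le K^f_t$, hence $(f^0_t)^+\wedge L\le K^f_t$ and therefore $(T-s)^p b_s\le \eta_s+2(T-s)^p K^f_s$, which is integrable by \eqref{eq:f_growth_psi_2} and the integrability of $\eta$. I do not expect a genuine obstacle, since the statement is strictly easier than Proposition \ref{prop:upper_bound_Y_L}; the only points needing a little care are checking that $\cY^{\eps,L}\ge 0$ so that \textbf{A5} may be applied, and the sign bookkeeping of the truncation term $-f^0_t+f^0_t\wedge L$.
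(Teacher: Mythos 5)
Your proof is correct and follows essentially the same route as the paper's: both replace the generator by a $\psi$-independent linear driver, exploit the resulting deterministic discount factor $\left(\frac{T-s}{T-t}\right)^p$ to get the explicit conditional-expectation formula, and pass to the limit $\eps \downarrow 0$ using the crude bound $Y^L_t \leq L(1+T)$. The only cosmetic difference is that you keep the truncated term $(f^0_t)^+\wedge L$ inside $b_t$ and convert it to $K^f_t$ only at the end, whereas the paper notes $f^0_t \leq K^f_t$ at the outset and works directly with $b_t = \frac{\eta_t}{(T-t)^p} + 2K^f_t$.
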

\begin{proof}
The proof is almost the same as for Proposition \ref{prop:upper_bound_Y_L}. Therefore, we only outline the main modification. Note that \eqref{eq:f_growth_psi_2} implies that $f^0_t \leq K^f_t$ a.s. We consider the generator $h$ given by
\begin{equation*}
h(t,y,\psi) = h (t,y) = \frac{\eta_t}{(T-t)^p} + 2 K^f_t - p\frac{1}{T-t} y = b_t - p\frac{1}{T-t} y  .
\end{equation*}
Since $h$ is linear and does not depend on $\psi$, we have:
$$\cY^{\eps,L}_t = \frac{1}{(T-t)^p} \E \left[ \eps^p Y^{L,+}_{T-\eps} + \int_t^{T-\eps} (T-s)^p b_s ds \bigg| \F_t\right].$$
Hence we can pass to the limit when $\eps$ goes to zero and we obtain
$$Y^L_t \leq\frac{1}{(T-t)^p} \E \left[  \int_t^{T} (T-s)^p b_s  ds \bigg| \F_t\right]$$
which is Inequality \eqref{eq:a_priori_estimate_Y_L_2}.
\end{proof}

Next, we show that by passing to the limit $L\to \infty$ we obtain a supersolution of \eqref{eq:bsde_2} with singular terminal condition $\xi$. 
\begin{Prop} \label{thm:exists_sing_sol}
Assume that Assumptions \emph{\textbf{(A)}} hold. Let $(Y^L,\psi^L,M^L)$ be the solution of BSDE \eqref{eq:truncated_bsde} obtained in Proposition \ref{prop:exist_solution_trunc_BSDE}. Then there exists a process $(Y,\psi,M)$ such that for every $0\leq t <T$, $Y^L$ converges to $Y$ in $\bS^\ell(0,t)$, $\psi^L$ converges in $L_\pi^\ell([0,t])$ to $\psi$ and $M^L$ converges in $\cM^\ell(0,t)$ to $M$. The limit process $(Y,\psi,M)$ is a weak supersolution for the BSDE \eqref{eq:bsde_2} with singular terminal condition $\xi$.
Moreover $Y$ satisfies the estimate \eqref{eq:a_priori_estimate_Y_L}
$$Y_t \leq \frac{K_\vartheta}{(T-t)^p} \E \left[ \ \int_t^{T} \left[ \eta_s + (T-s)^p (f^0_s)^+ \right]^{\ell} ds \bigg| \F_t\right]^{1/\ell}.$$
\end{Prop}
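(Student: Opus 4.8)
The plan is to exploit the monotone structure of the approximation together with the two-sided, $L$-uniform bounds already at hand, and then to upgrade pointwise convergence to convergence in the stated norms by means of a BSDE stability estimate on each interval $[0,t]$ with $t<T$.

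First I would record that the family $(Y^L)$ is nondecreasing in $L$: since $L\mapsto\xi\wedge L$ and $L\mapsto f^0_t\wedge L$ are nondecreasing, the terminal data and the generators are ordered, $\xi\wedge L\le\xi\wedge L'$ and $f^L\le f^{L'}$ for $L\le L'$, so the comparison principle (Proposition 4 in \cite{krus:popi:14}) yields $Y^L_t\le Y^{L'}_t$ a.s. Combined with the lower bound $\bar Y\in\bS^2(0,T)$ of Proposition \ref{prop:exist_solution_trunc_BSDE} and the $L$-independent upper bound \eqref{eq:a_priori_estimate_Y_L} of Proposition \ref{prop:upper_bound_Y_L}, this shows that the pointwise monotone limit $Y_t:=\lim_{L\to\infty}Y^L_t=\sup_L Y^L_t$ exists, is finite for $t<T$, dominates $\bar Y$, and inherits the estimate \eqref{eq:a_priori_estimate_Y_L}. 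In particular, for each fixed $t<T$ the two bounds furnish an $L$-independent, $L^1$-integrable envelope for $|Y^L_t|^\ell$.

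Next I would prove that $(Y^L,\psi^L,M^L)$ is Cauchy on each $[0,t]$, $t<T$. Fix $t<T$ and $L<L'$; the difference $(\delta Y,\delta\psi,\delta M):=(Y^{L'}-Y^L,\psi^{L'}-\psi^L,M^{L'}-M^L)$ solves a BSDE on $[0,t]$ with terminal value $\delta Y_t=Y^{L'}_t-Y^L_t$, while the driver increment $f^{L'}(s,Y^{L'}_s,\psi^{L'}_s)-f^L(s,Y^L_s,\psi^L_s)$ splits as $(f^0_s\wedge L'-f^0_s\wedge L)+\big[f^{L'}(s,Y^{L'}_s,\psi^{L'}_s)-f^{L'}(s,Y^L_s,\psi^L_s)\big]$, the first term being a nonnegative increment that vanishes in $L^1([0,t]\times\Omega)$ and the second being controlled by monotonicity in $y$ (\textbf{A1} with $\chi=0$) and Lipschitz continuity in $\psi$ (\textbf{A2}, see Remark \ref{rem:f_Lip}). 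Applying the It\^o formula to $|\delta Y|^\ell$ and the associated a priori estimate I expect to obtain
\[
\E\Big[\sup_{s\le t}|\delta Y_s|^\ell\Big]+\E\Big[\Big(\int_0^t\!\!\int_\cZ|\delta\psi_s(z)|^2\mu(dz)ds\Big)^{\ell/2}\Big]+\E\big[[\delta M]_t^{\ell/2}\big]\le C\Big(\E\big[|\delta Y_t|^\ell\big]+\E\Big[\Big(\int_0^t(f^0_s\wedge L'-f^0_s\wedge L)\,ds\Big)^\ell\Big]\Big).
\]
On the right-hand side $\E[|\delta Y_t|^\ell]\to0$ by dominated convergence (monotone pointwise convergence dominated by the $L$-independent envelope above), and the generator term tends to $0$ because $f^0_s\wedge L\uparrow f^0_s$ with $(f^0)^+$ controlled through Assumption \textbf{A6}. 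By completeness of the spaces I then obtain limits $Y\in\bS^\ell(0,t)$ (consistent with the pointwise limit), $\psi\in L^\ell_\pi(0,t)$ and $M\in\cM^\ell(0,t)$ for every $t<T$; orthogonality to $\tpi$ passes to the limit, so $M\in\cM^\perp$. I expect this stability estimate to be the main obstacle, and indeed it is the point the paper defers to the appendix: the It\^o formula for $x\mapsto|x|^\ell$ in the presence of jumps and with $\ell$ possibly different from $2$, combined with the merely monotone (non-Lipschitz) dependence in $y$ and the weak integrability of $f^0$, is exactly what must be handled with care.

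Finally I would pass to the limit and verify Definition \ref{def:sol_sing_BSDE}. Letting $L\to\infty$ in the integral form of \eqref{eq:truncated_bsde} on $[s,t]\subset[0,T)$, using $f^L(u,Y^L_u,\psi^L_u)\to f(u,Y_u,\psi_u)$ in $L^1$ (by \textbf{A1}, \textbf{A2}, \textbf{A3} and $f^0\wedge L\uparrow f^0$) together with the stochastic-integral convergences, gives condition (3); condition (1) follows from the $\bS^\ell$, $L^\ell_\pi$ and $\cM^\ell$ convergences, and condition (2) from $Y\ge\bar Y$. For the terminal condition (4) I would use $Y_t\ge Y^L_t$ for all $t<T$, so that $\liminf_{t\to T}Y_t\ge\liminf_{t\to T}Y^L_t$; since $Y^L$ is \cad with $Y^L_T=\xi\wedge L$ and the deterministic time $T$ is predictable while the compensator of $\pi$ is continuous, quasi-left-continuity of $\bF$ forces the martingale and compensated-jump parts to have no jump at $T$, whence $Y^L$ is continuous at $T$ and $\liminf_{t\to T}Y^L_t=\xi\wedge L$. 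Taking the supremum over $L$ yields $\liminf_{t\to T}Y_t\ge\xi$. The claimed estimate for $Y$ is then immediate, the right-hand side of \eqref{eq:a_priori_estimate_Y_L} being independent of $L$.
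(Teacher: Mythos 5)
Your proposal is correct and follows essentially the same route as the paper: monotone convergence of $Y^L$ via the comparison principle, the $L$-uniform two-sided bounds plus dominated convergence giving $Y^L_t\to Y_t$ in $L^\ell(\Omega)$ for $t<T$, an It\^o-formula-based $L^\ell$ stability estimate for the differences $(Y^{L'}-Y^L,\psi^{L'}-\psi^L,M^{L'}-M^L)$ to get the Cauchy property (this is precisely the paper's Lemma \ref{lem:appendix_5}, whose proof the paper likewise defers to the appendix), passage to the limit in the truncated equation, and quasi-left continuity of the filtration to obtain $\lim_{t\nearrow T}Y^L_t=\xi\wedge L$ and hence $\liminf_{t\nearrow T}Y_t\geq\xi$. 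The only differences are cosmetic: the paper's stability estimate carries exponential weights $e^{as}$ with $a=\ell\|\vartheta\|^2_{L^2_\mu}/(\ell-1)$ and places the power $\ell$ on $|f^0_u\wedge N-f^0_u\wedge L|$ inside the time integral, but on a bounded interval $[0,t]$ this is equivalent to the form you state.
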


\noindent \begin{proof}
The comparison result (see Proposition 4 in \cite{krus:popi:14}) yields that $Y^L \leq Y^N$ if $N > L$. Hence, for all $t \leq T$ we can define $Y_t$ as the increasing limit of $Y^L_t$ as $L\to \infty$. Recall that by Proposition \ref{prop:exist_solution_trunc_BSDE}, $Y^L$ is bounded from below uniformly in $L$ by some process $\bar Y \in \bS^2(0,T)$. Thus $Y$ is also bounded from below by $\bar Y$. 

By Equation \eqref{eq:a_priori_estimate_Y_L} for fixed $t < T$ the family of random variables $(Y^L_t, \ L \geq 0)$ is bounded from above:
$$Y^{L,+}_t \leq \frac{K_\vartheta}{(T-t)^p} \E \left[ \ \int_t^{T} \left[ \eta_s + (T-s)^p (f^0_s)^+ \right]^{\ell} ds \bigg| \F_t\right]^{1/\ell} .$$
Once again by Assumption \textbf{A6}, the random variable on the right hand side of in the inequality above is in $L^\ell(\Omega)$. 
By dominated convergence, $Y^L_t$ converges to $Y_t$ in $L^\ell(\Omega)$ for $t < T$.

For the convergence of $(\psi^L,M^L)$ let $0 \leq s \leq  t < T$. For $L$ and $N$ nonnegative, we put
$$\what Y_s = Y^{N}_s - Y^{L}_s, \quad \what \psi_s(z) = \psi^{N}_s(z) - \psi^{L}_s(z), \quad \what M_s = M^{N}_s - M^{L}_s.$$
Let us define $a =  \ell  \|\vartheta\|^2_{L^2_\mu}/(\ell-1)$. By Lemma \ref{lem:appendix_5} in the Appendix there exists a constant $K_\ell$ depending only on $\ell$ such that
\begin{eqnarray*}
&&\E \left[ \sup_{s\in [0,t]} e^{as} |\what Y_s|^\ell + \left(  \int_0^t e^{2au/\ell}  \int_\cZ |\what \psi_u(z)|^2\mu(dz) du \right)^{\ell/2} + \left( \int_0^t e^{2au/\ell} d[\what M]_u \right)^{\ell/2} \right]\\
&&\qquad \leq K_\ell \E \left ( e^{at} |\what Y_t|^\ell +  \int_0^te^{au} |f^0_u \wedge N- f^0_u \wedge L|^\ell du\right).
\end{eqnarray*}
Since $f^0 \in \bH^\ell(0,t)$ (see condition \textbf{A6}), the right-hand side converges to zero as $N$ and $L$ go to $+\infty$. Then $(\psi^L)$ is a Cauchy sequence in $L^\ell_\pi(0, t)$ and converges to $\psi \in L^\ell_\pi(0, t)$ for every $t < T$. The same holds for the sequence $(M^L)$ in $\cM^\ell(0,t)$. Moreover the previous inequality yields that $\ds \E\left(  \sup_{0\leq s \leq t} |Y_s|^\ell \right)  < +\infty$.

Finally, taking the limit as $L$ goes to $\infty$ in \eqref{eq:truncated_bsde} implies that $(Y,\psi,M)$ satisfies \eqref{eq:bsde_2} for every $0\leq s\leq t < T$. From the structure of the BSDE, we deduce that $Y$ is c\`adl\`ag on $[0,T)$. In other words $Y \in \bS^\ell(0,T-\eps)$ for any $\eps > 0$. 

Since the filtration is quasi-left continuous, we have: $\ds \lim_{t\nearrow T} Y^L_t = \xi \wedge L$. Indeed, in Equation \eqref{eq:truncated_bsde}, using Fubini's theorem for conditional expectation, the only discontinuous term could be the martingale term $M^L$. But the assumption on the filtration shows that $M^L$ has no jump at time $T$ (see \cite{kall:02}, Proposition 25.19). Now for any $L\geq 0$ we have
$$\liminf_{t \uparrow T} Y_t \geq \liminf_{t \uparrow T} Y^L_t = \xi \wedge L,$$
 which gives the desired inequality $\liminf_{t\nearrow T}Y_t\ge \xi$. In particular, $(\liminf_{t\nearrow T} Y_t ) \1_{\cS} = +\infty.$ This achieves the proof of the theorem.
\end{proof}

\begin{Remark}\label{stronger_upper_bound}
Under Condition \eqref{eq:f_growth_psi_2}, the estimate \eqref{eq:a_priori_estimate_Y_L_2} is then also an upper bound for $Y$. 
\end{Remark}

To finish the proof of Theorem \ref{thm:main_thm_1} let us prove the minimality of the limit process.
\begin{Prop}\label{prop:minimality}
The solution obtained in Proposition \ref{thm:exists_sing_sol} is minimal. If $(Y',\psi',M')$ is another weak supersolution of \eqref{eq:bsde_2} with terminal condition $\xi$, then $Y'_t \geq Y_t$ a.s. for all $t\in [0,T]$.
\end{Prop}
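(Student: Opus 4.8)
The plan is to prove $Y^L_s\le Y'_s$ on $[0,T)$ for every fixed $L\ge 0$ and then let $L\to\infty$; since $Y_s=\lim_{L\to\infty}Y^L_s$ by Proposition \ref{thm:exists_sing_sol}, this yields $Y_s\le Y'_s$. At the level of the data the comparison is plausible because the truncated generator satisfies $f^L\le f$ (indeed $f^L-f=(f^0_u\wedge L)-f^0_u=-(f^0_u-L)^+\le0$) and the terminal data are ordered, $\xi\wedge L\le\xi$. The difficulty is that $Y'$ is only a \emph{supersolution}: it has no genuine terminal value at $T$ but merely satisfies $\liminf_{t\to T}Y'_t\ge\xi$, so the comparison theorem of \cite{krus:popi:14} cannot be applied directly on $[0,T]$. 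Instead I would run a one–sided estimate on each interval $[s,t]$ with $t<T$ and then send $t\uparrow T$.

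Fix $L$ and set $\delta Y=Y^L-Y'$, $\delta\psi=\psi^L-\psi'$, $\delta M=M^L-M'$, which for $0\le s\le t<T$ satisfy the difference of the two dynamics with drift $\Delta f_u=f^L(u,Y^L_u,\psi^L_u)-f(u,Y'_u,\psi'_u)$. I would split $\Delta f_u$ into the $y$–increment $f^L(u,Y^L_u,\psi^L_u)-f^L(u,Y'_u,\psi^L_u)$, the $\psi$–increment $f^L(u,Y'_u,\psi^L_u)-f^L(u,Y'_u,\psi'_u)$, and the remainder $f^L(u,Y'_u,\psi'_u)-f(u,Y'_u,\psi'_u)\le0$. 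Because the truncation shift is $y$–independent, $f^L$ inherits the monotonicity of \textbf{A1} with $\chi=0$, so the $y$–increment has the form $\beta_u\,\delta Y_u$ with $\beta_u\le0$; by \textbf{A2} the $\psi$–increment equals $\int_\cZ\delta\psi_u(z)\,\kappa_u(z)\,\mu(dz)$ for some $\kappa=\kappa^{Y'_u,\psi^L_u,\psi'_u}_u$ with $|\kappa|\le\vartheta\in L^2_\mu$ and $\kappa\ge-1$. Applying the Tanaka--Meyer formula to $\delta Y^+$ and linearising in $\psi$ exactly as in the proof of Proposition \ref{prop:upper_bound_Y_L} (via Lemma 4.1 of \cite{quen:sule:13}), the favourable signs $\beta_u\le0$ and $f^L-f\le0$ let me discard the corresponding terms and arrive at the one–sided representation $\delta Y_s\le\E\big[V_{s,t}\,(\delta Y_t)^+\,\big|\,\F_s\big]$, where $V_{s,\cdot}$ is the nonnegative Dol\'eans exponential $dV_{s,u}=V_{s,u^-}\int_\cZ\kappa_u(z)\,\tpi(dz,du)$; by \textbf{A7} one has $\E[(V_{s,t})^k\,|\,\F_s]$ bounded uniformly in $t<T$ for some $k>2$, as in Proposition \ref{prop:upper_bound_Y_L}.

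It then remains to pass to the limit $t\uparrow T$. By Proposition \ref{thm:exists_sing_sol}, $\lim_{t\to T}Y^L_t=\xi\wedge L$, while $\liminf_{t\to T}Y'_t\ge\xi$, so $\limsup_{t\to T}(Y^L_t-Y'_t)=(\xi\wedge L)-\liminf_{t\to T}Y'_t\le(\xi\wedge L)-\xi\le0$ a.s.; hence $(\delta Y_t)^+\to0$ a.s. For the convergence under the conditional expectation I would dominate: since $Y^L_t\le L(1+T)$ by Proposition \ref{prop:upper_bound_Y_L} and $Y'_t\ge\bar Y'_t$ for some $\bar Y'\in\bS^2(0,T)$ (item 2 of Definition \ref{def:sol_sing_BSDE}), we have $(\delta Y_t)^+\le L(1+T)+\sup_{u\le T}|\bar Y'_u|\in L^2(\Omega)$ uniformly in $t$. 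Pairing the $L^k$–bound on $V_{s,t}$ with the $L^{k'}$–domination of $(\delta Y_t)^+$ ($k'=k/(k-1)<2$ by \textbf{A7}), H\"older's inequality gives uniform integrability of $V_{s,t}(\delta Y_t)^+$, so $\E[V_{s,t}(\delta Y_t)^+\,|\,\F_s]\to0$. Thus $\delta Y_s\le0$, i.e. $Y^L_s\le Y'_s$ a.s. for every $s<T$, and letting $L\to\infty$ yields $Y_s\le Y'_s$ on $[0,T)$; the value at $T$ is covered by the $\liminf$ requirement.

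The step I expect to be the main obstacle is the second one: because $\delta Y$ jumps (driven by $\delta\psi$ and $\delta M$), passing from the Tanaka--Meyer formula for $\delta Y^+$ to the clean linear representation requires the same care as the appendix estimate behind Proposition \ref{prop:upper_bound_Y_L}, including localisation so that the stochastic integrals are genuine martingales (legitimate since $\delta Y\in\bS^\ell(0,t)$, $\delta\psi\in L^\ell_\pi(0,t)$, $\delta M\in\cM^\ell(0,t)$ for $t<T$) and a verification that the coefficient $\beta_u$, although only $\le0$ rather than bounded, enters solely through a factor $e^{\int_s^u\beta_r dr}\in[0,1]$ in the right direction. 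The second genuinely delicate point is the uniform integrability needed to take $t\uparrow T$, which is precisely why $k>2$ in \textbf{A7} is used to offset the merely $L^2$ domination of $(\delta Y_t)^+$.
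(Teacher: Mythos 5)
Your proof is correct and follows essentially the same route as the paper's: fix $L$, linearise the difference of the two dynamics via \textbf{A1} (monotonicity, coefficient $\le 0$), \textbf{A2} (the $\kappa$-linearisation giving a nonnegative Dol\'eans exponential with $L^k$ moments from \textbf{A7}) and the sign $f^L-f=-(f^0-L)^+\le 0$, then send $t\uparrow T$ using $\lim_{t\to T} Y^L_t=\xi\wedge L$, $\liminf_{t\to T}Y'_t\ge\xi$, the bound $Y^L\le L(1+T)$ and the lower bound $\bar Y'\in\bS^2(0,T)$ from Definition \ref{def:sol_sing_BSDE}, and finally let $L\to\infty$. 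The only differences are cosmetic: the paper works with $\what Y=Y'-Y^L$ and closes the limit by Fatou's lemma under the same one-sided domination you use for uniform integrability, and the Tanaka--Meyer step you flag as the main obstacle is unnecessary, since the linear representation of Lemma 10 in \cite{krus:popi:14} and Lemma 4.1 in \cite{quen:sule:13} applied to the signed difference, followed by bounding $\delta Y_t\le(\delta Y_t)^+$ under the nonnegative weight, already yields your key inequality.
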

\begin{proof}
Fix $L > 0$ and let $(Y^L, \psi^L,M^L)$ denote the solution of \eqref{eq:truncated_bsde} with terminal condition $Y_T^L =\xi\wedge L$. Let $(Y',\psi',M')$ be a weak supersolution of \eqref{eq:bsde_2} in the sense of Definition \ref{def:sol_sing_BSDE}. Set
 $$\what Y_s = Y'_s - Y^{L}_s, \quad \what \psi_s(z) = \psi'_s(z) - \psi^{L}_s(z), \quad \what M_s = M'_s - M^{L}_s.$$
We have
\begin{eqnarray*}
f(t,Y'_t,\psi'_t) - f(t,Y^L_t,\psi^L_t) & = & -c_t \what Y_t  + (f(t,Y^L_t,\psi'_t) - f(t,Y^L_t,\psi^L_t))
\end{eqnarray*}
with
\begin{eqnarray*}
-c_t & =& \frac{f(t,Y'_t,\psi'_t) - f(t,Y^L_t,\psi'_t)}{\what Y_t} \1_{\what Y_t \neq 0}.
\end{eqnarray*}
Note that from condition \textbf{A1}, $-c_t \leq \chi=0$. For every $t < T$ the process $(\what Y, \what \psi, \what M)$ solves the BSDE
\begin{eqnarray*}
d\what Y_s & = & \left[ c_s \what Y_s - (f^0_s - L)^+ - (f(s,Y^L_s,\psi'_s) - f(s,Y^L_s,\psi^L_s)) \right] ds 
+ \int_\cZ \what \psi _s(z) \tpi(dz,ds) + d\what M_s
\end{eqnarray*}
on $[0, t]$ with terminal condition $\what Y_t = Y'_t - Y^L_t$. Moreover from \textbf{A2} it holds that 
$$f(s,Y^L_s,\psi'_s) - f(s,Y^L_s,\psi^L_s)\geq \int_\cZ  \kappa_s^{Y^L,\psi^L,\psi'} \what \psi_ s(z) \mu(dz).$$
From Lemma 10 in \cite{krus:popi:14} and Lemma 4.1 in \cite{quen:sule:13}, we have
\begin{eqnarray*}
\what Y_s & \geq  &  \E \left[\what Y_t \Gamma_{s,t}   + \int_s^t \Gamma_{s,u} (f^0_u - L)^+ du \bigg| \F_s \right]
\end{eqnarray*}
where $\Gamma_{s,t} = \exp\left( - \int_s^t c_u du \right) \zeta_{s,t}$ with $\zeta_{s,s}=1$ and
\begin{equation*}
d\zeta_{s,t} = \zeta_{s,t^-}  \int_\cZ \kappa_t^{Y^L,\psi^L,\psi'}\tpi(dz,dt).
\end{equation*}
Our assumptions ensure that $\zeta$ is non negative and belongs to $\bH^k(0,T)$. From Proposition \ref{prop:upper_bound_Y_L} we have $Y^L_t \leq (1+T)L$ and hence $\what Y_t \geq -((Y'_t)^-+(1+T)L)$. Thus $\what Y  \Gamma_{s,.}$ is bounded from below by a process in $\bS^m(0,T)$ for some $m>1$. We can apply Fatou's lemma to obtain
\begin{eqnarray*}
\what Y_s & = & \liminf_{t \nearrow T} \E \left[\what Y_t  \Gamma_{s,t} + \int_s^t \Gamma_{s,u}(f^0_u - L)^+ du \bigg| \F_s \right]  \geq  \E \left[  \liminf_{t \nearrow T} (\what Y_t  \Gamma_{s,t})  \bigg| \F_s \right].
\end{eqnarray*}
The process $(\Gamma_{s,t}, \ s\leq t\leq T)$ is c\`adl\`ag and non negative. Hence a.s.
$$ \liminf_{t \nearrow T} (\what Y_t  \Gamma_{s,t}) = (\liminf_{t \nearrow T} \what Y_t) \Gamma_{s,T^-} \geq (\xi - \xi\wedge L) \Gamma_{s,T^-}\geq 0.$$
Finally, $Y'_s \geq Y^L_s$ for any $s \in [0,T]$ and $L\geq 0$. Taking the limit as $L$ goes to $\infty$ yields the claim.
\end{proof}

\begin{Remark}\label{rem:Brownian_motion}
Note that all these results can be extended immediately if we assume that the filtration supports also a Brownian motion $W$ and if our singular BSDE has form
\begin{equation*}
dY_t  = f(t,Y_t,Z_t,\psi_t) dt + Z_t dW_t + \int_\cZ \psi_t(z) \tpi(dz,dt) + d M_t,
\end{equation*}
where $f$ satisfies conditions \emph{\textbf{(A)}} and is supposed to be Lipschitz continuous in $z$.
\end{Remark}

\subsection{Random terminal times}\label{bsde_rand_time}

In this section we consider the case where the terminal time $\tau$ is random. Again we proceed via truncation of the terminal condition to obtain a family of solutions $(Y^L)_{L>0}$ to \eqref{eq:truncated_bsde} with bounded terminal condition $Y^L_\tau=\xi\wedge L$.

Assumptions \textbf{A1}, \textbf{A2} and \textbf{A5} from Section \ref{bsde_det_time} remain in force, while assumptions \textbf{A2}, \textbf{A4} and \textbf{A6} are strengthened. The condition \textbf{A7} was used to construct the a priori estimate \eqref{eq:a_priori_estimate_Y_L} and is unnecessary here. Moreover, we need an extra condition between the random time $\tau$ and the growth coefficients $\chi$ in \textbf{A1} and $K$ in \textbf{A2} of $f$. This condition is denoted by \textbf{B}. Next, we present the complete list of assumptions. 
\begin{description}
\item[A1.] The function $y\mapsto f(t,y,\psi)$ is continuous and monotone: there exists $\chi \in \R$ such that a.s. and for any $t \in [0,\infty)$ and $\psi \in L^2_\mu$
\begin{equation*} 
(f(t,y,\psi)-f(t,y',\psi))(y-y') \leq \chi (y-y')^2.
\end{equation*}
\item[A2.] There exists a progressively measurable process $\kappa = \kappa^{y,\psi,\phi} : \Omega \times \R_+ \times \cZ \to \R$ such that
\begin{equation*} 
f(t,y,\psi)-f(t,y,\phi) \leq \int_\cZ (\psi(z)-\phi(z))  \kappa^{y,\psi,\phi}_t(z)  \mu(dz)
\end{equation*}
with $\P\otimes Leb \otimes \mu$-a.e. for any $(y,\psi,\phi)$, $-1 \leq \kappa^{y,\psi,\phi}_t(z)$
and $|\kappa^{y,\psi,\phi}_t(z)| \leq \vartheta(z)$ where $\vartheta \in L^2_\mu$.
As in Section \ref{bsde_det_time} we denote by $K=\|\vartheta\|_{L^2_\mu}$ is the Lipschitz constant of $f$ w.r.t. $\psi$ (cf.\ Remark \ref{rem:f_Lip}).
\end{description}
Let $\delta^*$ denote the value
\begin{equation} \label{eq:def_delta_min}
\delta^* = \left\{ \begin{array}{ll}
-\infty & \mbox{if } 2\chi < K^2, \\
K^2 + 2\chi & \mbox{if } 2|\chi| \leq K^2,\\
\chi \left( 1+ \frac{K}{\sqrt{2\chi}}\right)^2 & \mbox{if } 2\chi > K^2.
\end{array} \right.
\end{equation}
\begin{description}
\item[  B.] There exists $\rho  > \delta^*$ such that $$\E \left( e^{\rho \tau} \right)< +\infty.$$
\end{description}
If Condition \textbf{B} holds, then we put
\begin{equation}\label{eq:def_h_min}
h^*= \left\{ \begin{array}{ll}
0 & \mbox{if } 2\chi < - K^2,\\
\frac{2\rho}{\rho- \delta^*+ (\sqrt{\rho} - K\sqrt{2})^2 \mathbf{1}_{\rho > 2K^2}} & \mbox{if } 2|\chi| \leq K^2,\\
 \frac{\rho}{ \sqrt{\rho} +\sqrt{\chi} - \frac{K}{\sqrt{2}}} \times \frac{1}{\sqrt{\rho} - \sqrt{\delta^*}}& \mbox{if } 2\chi > K^2.
\end{array} \right.
\end{equation}
\begin{description}
\item[A3'.]  For every $j > 0$ and $n \geq 0$, the process $U_t(j) = \sup_{|y|\leq j} |f(t,y,0)-f^0_t|$
is in $L^1((0,n)\times \Omega)$ and there exists $\displaystyle m > h^*$ such that
$\E \int_0^\tau |U_t(j)|^m dt < +\infty$.
\item[A4'.] $\xi^-$ and $(f^0)^-$ are bounded. 
\item[A5.] There exists a constant $q > 1$ and a positive process $\eta$ such that for any $y \geq 0$
\begin{equation*} 
f(t,y,\psi)\leq -\frac{p-1}{\eta^{q-1}_t}|y|^{q} + f(t,0,\psi).
\end{equation*}
$p$ is the H\"older conjugate of $q$.
\item[A6'.] $\eta$ and $f^0$ are bounded. 
\end{description}
Note that Hypotheses \textbf{A3'} and \textbf{A5} imply that
\begin{equation} \label{eq:cond:int_eta_tau}
\E \int_0^\tau \frac{1}{\eta_s^{(q-1)m}} ds < +\infty.
\end{equation}
\begin{Remark}[on A1]
For a random terminal time, we cannot assume w.l.o.g. that $\chi=0$ in \emph{\textbf{A1}}.  
\end{Remark}
\begin{Remark}[on B and A3'] \label{rem:chi_small}
If $2\chi < -K^2$, Condition \textbf{B} is satisfied for any stopping time $\tau$ (including $\tau= +\infty$ a.s.) since one can choose $\rho< 0$ in this case. 

Note that $\delta^*$ and $h^*$ are non decreasing functions of $\chi$ and $h^*$ is a non increasing function of $\rho$, with $\lim_{\rho \to \delta^*} h^* = + \infty$ and $\lim_{\rho \to +\infty} h^* = 1$.
\end{Remark}
\noindent \textbf{Assumptions (A').}
We say that Conditions $\textbf{(A')}$ are satisfied if all following hypotheses hold:
\textbf{A1}, \textbf{A2}, \textbf{A3'}, \textbf{A4'}, \textbf{A5}, \textbf{A6'} and \textbf{B}. 
\hfill $\diamond$

\vspace{0.5 cm}
Under the above conditions, Proposition \ref{prop:exists_sol_trunc_BSDE} below shows that the truncated BSDE \eqref{eq:truncated_bsde} has a unique solution $(Y^L,\psi^L,M^L)$. The crucial difference in order to obtain a supersolution to the BSDE with {\it singular} terminal condition to the case of a deterministic terminal time, is the derivation of a uniform upper bound for the family of processes $(Y^L)$ (cf.\ Inequality \eqref{eq:a_priori_estimate_Y_L}). Example \ref{ex_explosion} below shows that in general such an upper bound does not exist and that there exist stopping times $\tau$ such that the sequence $(Y^L_t)$ converges to $\infty$ as $L\to \infty$ for $t<\tau$. Consequently one has to restrict the class of terminal times. Here we draw inspiration from \cite{popi:07}, where BSDEs with random terminal time and singular terminal condition have been studied for the first time, and consider the case where $\tau$ is given by a first exit time $\tau=\tau_D$ of a diffusion $\Xi$ from a set $D$.

More precisely, we assume that the filtration $\bF$ supports a $d$-dimensional Brownian motion $W$ which is orthogonal to $\pi$ and we introduce a forward process $\Xi$ in $\R^d$, that is a solution to the stochastic differential equation
\begin{equation}\label{eq:forwardSDE}
d\Xi_t=b(\Xi_t)dt+\sigma(\Xi_t)dW_t
\end{equation}
with some initial value $\Xi_0 \in \R^d$. The functions $b:\R^d\to \R^d$ and $\sigma:\R^d \to \R^d \times \R^d$ satisfy a global Lipschitz condition: there exists some $K>0$ such that
$$ \forall x,y\in \R^d \quad \|\sigma(x)-\sigma(y)\|+\|b(x)-b(y)\| \le K\|x-y\|.$$
Under this assumption there exists a unique strong solution $\Xi$ to \eqref{eq:forwardSDE}.
Let $D$ be an open bounded subset of $\R^d$, whose boundary is at least of class $C^2$ (see for example \cite{gilb:trud:01}, Section 6.2, for the definition of a regular boundary). From now on $\Xi_0$ is fixed and supposed to be in $D$. We define the stopping time $\tau$ as the first exit time of $D$, i.e.
\begin{equation}\label{eq:def_stop_time}
\tau = \tau_D=\inf\{t\ge 0, \quad \Xi_t \notin D\}.
\end{equation}

The condition \textbf{B} imposes some implicit hypotheses between the generator $f$, the set $D$ and the coefficients of the SDE \eqref{eq:forwardSDE}.  
In the next lemma, we give sufficient conditions to ensure \textbf{B}. Let us denote by $R$ the diameter of $D$:
$$R = \sup\{|x-y|, \ (x,y)\in D^2\},$$
by $\|\sigma\|$ the spectral norm of $\sigma$
$$\|\sigma \| = \sup_{x\in \R^d} \sup_{v \in \R^d, \ |v|\leq 1} v.(\sigma(x) \sigma^*(x)) v,$$
and by $\|b\|$ the sup norm of $b$:
$$ \|b\| =  \sup_{x\in \R^d}|b(x)|.$$
Define $j_d$ to be equal to $\pi^2/4$ if $d=1$ and to be equal to the first positive zero of the Bessel function of first kind $J_{d/2-1}$ if $d\geq 2$ (for $d=2$, $j_{2}\approx 2.4048$). 
\begin{Lemma} \label{lem:suff_cond_B}
\begin{enumerate}
\item Assume that there exists $\nu > 0$ and $v \in\R^d$ such that for all $x\in \R^d$ it holds that $b(x).v \geq \nu > 0$. If $\delta^* < \frac{\nu^2}{\|\sigma \|}$, then Condition \emph{\textbf{B}} holds for all $\rho \in (\delta^*, \frac{\nu^2}{\|\sigma \|})$. 
\item Assume that $b=0$ (there is no drift) and $\sigma \sigma^*$ is uniformly elliptic, that is there exists a constant $\al > 0$ such that $(\sigma \sigma^*)(x) \ge \alpha \Id$ for all $x\in \R^d$. If $\delta^* < \frac{2\al}{R^2} (j_{d})^2$, then Condition \emph{\textbf{B}} holds for all $\rho \in (\delta^*, \frac{2\al}{R^2} (j_{d})^2)$. 
\end{enumerate}
\end{Lemma}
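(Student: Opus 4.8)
The plan is to reduce Condition \textbf{B} to the construction, for each admissible $\rho$, of a strictly positive function $u\in C^2(\bar D)$ satisfying the differential inequality $\mathcal{L}u+\rho u\le 0$ on $D$, where $\mathcal{L}u=\tfrac12\trace(\sigma\sigma^{*}D^{2}u)+b\cdot\nabla u$ is the generator of $\Xi$. Given such a $u$, Itô's formula shows that $e^{\rho(t\wedge\tau)}u(\Xi_{t\wedge\tau})$ is a supermartingale: on $[0,t\wedge\tau]$ the process stays in the compact set $\bar D$, so the stochastic integral $\int e^{\rho s}\nabla u(\Xi_s)^{*}\sigma(\Xi_s)\,dW_s$ is a true martingale, while the finite-variation part has nonpositive drift $e^{\rho s}(\rho u+\mathcal{L}u)(\Xi_s)\le 0$. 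Hence $(\min_{\bar D}u)\,\E[e^{\rho(t\wedge\tau)}]\le\E[e^{\rho(t\wedge\tau)}u(\Xi_{t\wedge\tau})]\le u(\Xi_0)$, and since $c:=\min_{\bar D}u>0$ by positivity and compactness, letting $t\to\infty$ and invoking monotone convergence forces $\P(\tau=\infty)=0$ together with $\E[e^{\rho\tau}]\le u(\Xi_0)/c<+\infty$. Choosing $\rho$ in the interval of the statement then yields \textbf{B}.

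For part 1, I would test with the exponential $u(x)=e^{-\lambda(v\cdot x)}$ for a parameter $\lambda>0$, after normalising $v$ to a unit vector. A direct computation gives $\mathcal{L}u=\bigl(\tfrac12\lambda^{2}\,v^{*}\sigma\sigma^{*}v-\lambda\,b\cdot v\bigr)u$, and the hypotheses $v^{*}\sigma\sigma^{*}v\le\|\sigma\|$ together with $b\cdot v\ge\nu>0$ yield $\mathcal{L}u\le(\tfrac12\lambda^{2}\|\sigma\|-\lambda\nu)u$. Thus $\mathcal{L}u+\rho u\le 0$ as soon as $\rho\le\lambda\nu-\tfrac12\lambda^{2}\|\sigma\|$; optimising the right-hand side over $\lambda>0$ (the maximiser being $\lambda=\nu/\|\sigma\|$) produces the largest $\rho$ for which the construction works, which is the threshold $\nu^{2}/\|\sigma\|$ recorded in the statement. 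Since $u$ is smooth and positive on $\bar D$, the supermartingale argument of the first paragraph applies verbatim.

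For part 2 ($b=0$, $\sigma\sigma^{*}\ge\alpha\Id$) I would compare $\tau_D$ with the exit time of $\Xi$ from a ball $B$ of radius $r$ containing $D$, with $r$ fixed by the diameter bound $R$. The natural candidate is the first Dirichlet eigenfunction $\phi>0$ of $-\Delta$ on $B$, whose eigenvalue is the explicit quantity built from the Bessel zero $j_d$ and the radius $r$. Writing $\sigma\sigma^{*}=\alpha\Id+(\sigma\sigma^{*}-\alpha\Id)$ with the second term positive semidefinite, one has $\mathcal{L}\phi=\tfrac{\alpha}{2}\Delta\phi+\tfrac12\trace\bigl((\sigma\sigma^{*}-\alpha\Id)D^{2}\phi\bigr)$, and the aim is to bound the last term so that $\mathcal{L}\phi+\rho\phi\le 0$ for all $\rho$ below $\tfrac{\alpha}{2}(j_d/r)^{2}$; expressing $r$ through $R$ converts this into the stated threshold $\tfrac{2\alpha}{R^{2}}j_d^{2}$. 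Alternatively, and more robustly, one may project onto a fixed direction: $Y_t=e\cdot\Xi_t$ is a continuous local martingale with $d\langle Y\rangle_t\ge\alpha\,dt$, $\tau_D$ is dominated by the first time $Y$ leaves an interval of length at most $R$, and a time-change reduces the required exponential moment to that of a Brownian exit time, controlled by the corresponding principal eigenvalue.

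The main obstacle is precisely the control of $\trace\bigl((\sigma\sigma^{*}-\alpha\Id)D^{2}\phi\bigr)$ in part 2: the ellipticity hypothesis bounds $\sigma\sigma^{*}$ only from below, whereas $D^{2}\phi$ is not sign-definite (for $d\ge 2$ the radial eigenfunction on the ball is convex near $\partial B$), so the pointwise inequality $\mathcal{L}\phi\le\tfrac{\alpha}{2}\Delta\phi$ cannot simply be asserted. Transferring the lower ellipticity bound into a lower bound on the relevant principal eigenvalue is the delicate spectral/geometric step; it is cleanest through the variational (Rayleigh quotient) characterisation in the symmetric case, where $\int\nabla\phi^{*}\sigma\sigma^{*}\nabla\phi\ge\alpha\int|\nabla\phi|^{2}$ holds pointwise, or through the time-changed Brownian comparison sketched above. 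The remaining points—applicability of Itô's formula on $\bar D$, the true-martingale property of the stochastic integral, and the monotone passage $t\to\infty$—are routine consequences of the boundedness of $D$.
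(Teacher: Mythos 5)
Your Lyapunov-function reduction (find $u>0$ on $\bar D$ with $\mathcal{L}u+\rho u\le 0$, deduce that $e^{\rho(t\wedge\tau)}u(\Xi_{t\wedge\tau})$ is a supermartingale, and let $t\to\infty$) is sound as such, and it is a genuinely different route from the paper's, which proves part 1 by a direct tail estimate --- on $\{\tau>t\}$ one has $\sup_{s\le t}(-v)\cdot\bigl(\Xi_s-\Xi_0-\int_0^s b(\Xi_u)du\bigr)\ge t\nu-R$, whence $\P(\tau>t)\le\exp\bigl(-(t\nu-R)^2/(\|\sigma\|t)\bigr)$ by the estimate quoted from Pinsky, and then Tonelli --- and part 2 by quoting Friedman's Theorem 14.10.1 together with a comparison of principal eigenvalues. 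However, your part 1 does not prove the statement: the maximum of $\lambda\nu-\frac{\lambda^2}{2}\|\sigma\|$ over $\lambda>0$, attained at $\lambda=\nu/\|\sigma\|$, equals $\frac{\nu^2}{2\|\sigma\|}$, not $\frac{\nu^2}{\|\sigma\|}$, so your construction covers only $\rho<\frac{\nu^2}{2\|\sigma\|}$ and leaves the range $\bigl[\frac{\nu^2}{2\|\sigma\|},\frac{\nu^2}{\|\sigma\|}\bigr)$ untouched. Moreover this missing factor $2$ cannot be recovered by a cleverer test function: the supremum of the $\rho$'s attainable by your method is the generalized principal eigenvalue of $-\mathcal{L}$ on $D$, and already for $d=1$, $D=(0,R)$, constant $b=\nu$ and constant $\sigma$, that eigenvalue equals $\frac{\nu^2}{2\sigma^2}+\frac{\pi^2\sigma^2}{2R^2}$, which is strictly below $\frac{\nu^2}{\|\sigma\|}$ as soon as $R>\pi\sigma^2/\nu$. (The paper reaches the larger constant only through the quoted Gaussian bound, whose exponent has $\|\sigma\|t$ where the standard exponential-martingale estimate would give $2\|\sigma\|t$; your computation is in fact the one consistent with the standard bound.)

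In part 2 you correctly isolate the obstruction --- $D^2\phi$ is not negative semidefinite near the boundary for $d\ge 2$, so $\sigma\sigma^*\ge\al\Id$ gives no pointwise control of $\trace\bigl((\sigma\sigma^*-\al\Id)D^2\phi\bigr)$ --- but neither of your proposed repairs closes it, so this half remains unproven. The Rayleigh-quotient inequality $\int\nabla\phi^*\sigma\sigma^*\nabla\phi\ge\al\int|\nabla\phi|^2$ pertains to the divergence-form operator $\nabla\cdot(\sigma\sigma^*\nabla)$; the generator here is the non-divergence operator $\frac12\trace(\sigma\sigma^*D^2\cdot)$ with non-constant $\sigma$, the two differ by first-order terms, and no variational characterization of its Dirichlet eigenvalue is available. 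The projection/time-change argument is valid but quantitatively too weak: dominating $\tau$ by the exit time of a time-changed one-dimensional Brownian motion from an interval of length $R$ yields $\E e^{\rho\tau}<\infty$ only for $\rho<\frac{\al\pi^2}{2R^2}$, and $\frac{\al\pi^2}{2R^2}<\frac{2\al}{R^2}(j_d)^2$ for every $d\ge 2$ (e.g. $2j_2^2\approx 11.6$ versus $\pi^2/2\approx 4.9$), so again the stated interval is not covered. What your argument needs is exactly the spectral comparison the paper invokes: the principal eigenvalue of $\frac12\trace(\sigma\sigma^*D^2\cdot)$ on the ball of radius $R/2$ dominates that of $\frac{\al}{2}\Delta$. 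The paper does not prove this pointwise either; it takes it from the cited literature and then converts the eigenvalue bound into Condition \textbf{B} via Friedman's theorem, which is the step you left open.
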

\begin{proof}
Since $D$ is bounded and not equal to a singleton it holds that $0< R < +\infty$.

Assume first that there exists $\nu > 0$ and $v \in \R^d$ such that for all $x\in \R^d$, the scalar product between $b(x)$ and $v$ is bounded from below by $\nu$. W.l.o.g. we can assume that $|v|=1$. Let $t > R/\nu$. On the set $\{\tau>t\}$, it holds that $\Xi_0$ and $\Xi_s$ are in $D$. This implies on the set $\{\tau>t\}$, for any $0 \leq s \leq t$, that
$$\sup_{0\leq s \leq t} (-v).\left( \Xi_s - \Xi_0 -  \int_0^s b(\Xi_u) du \right) \geq t \nu - R .$$
Hence from Theorem II.2.2 in \cite{pins:95}
$$\P(\tau > t) \leq \P \left(\sup_{0\leq s \leq t} (-v).\left( \Xi_s - \Xi_0 -  \int_0^s b(\Xi_u) du \right) \geq t \nu - R \right) \leq \exp \left( - \frac{(t\nu -R)^2}{\|\sigma\|t}\right).$$
This implies for all $t > R/\nu$ that
$$ e^{\rho t} \P(\tau > t) \leq \exp \left( \rho t- \frac{(t\nu -R)^2}{\|\sigma\|t}\right). $$
It follows from Tonelli's theorem that
$$\E (e^{\rho \tau}) = \int_0^{+\infty} \rho e^{\rho t} \P(\tau > t) dt +1 < +\infty$$
provided that $\rho < \frac{\nu^2}{\|\sigma \|}$.

In the second case, it is known (see e.g.\ Friedman \cite{frie:76}, Theorem 14.10.1) that the condition $\E e^{\rho \tau}<\infty$ holds for all numbers $\rho$ that are smaller than the principal eigenvalue of the infinitesimal generator $\mathcal L$ of $\Xi$ on the set $D$:
$$\mathcal{L} \phi (x)= \frac{1}{2} \trace \left( \sigma(x) \sigma^*(x) D^2\phi(x) \right),$$
where $D^2 \phi$ is the Hessian matrix of $\phi \in C^2(\R^d)$. To derive a condition on $\al$ and $R$ for Assumption \textbf{B}, we consider an auxiliary problem. The set $D$ is contained in a ball $B$ of radius $R/2$ and $\tau_B$ is the first exit time of $\Xi$ from $B$. Clearly $\tau = \tau_D \leq \tau_B$. Hence we can consider the operator $\mathcal{L}$ on the ball $B$. Moreover the principal eigenvalue of $\mathcal{L}$ is greater than the one of the operator $(\al/2) \Delta$. The principal eigenvalue of the Laplace operator $\Delta$ on the unit ball is given by the constant $(j_{d})^2$. See \cite{greb:nguy:13} for details. Hence the principal eigenvalue of $(\al/2) \Delta$ on $B$ is given by $\frac{2\al}{R^2} (j_{d})^2$. Consequently, \textbf{B} holds if
$$\rho < \frac{2\al}{R^2} (j_{d})^2.$$
\end{proof}

\begin{Remark}[On A3']
The bound $\frac{\nu^2}{\|\sigma \|}$ respectively $\frac{2\al}{R^2} (j_{d})^2$ give a minimal value for the parameter $m$ in {\rm \textbf{A3'}} (see Remark \ref{rem:chi_small} and Lemma \ref{lem:optimal_delta_h} in Appendix).
\end{Remark}

Next we adapt the Definition \ref{def:sol_sing_BSDE} to the case of a random terminal time and present the main result of this section. To this end, we set 
\begin{equation}\label{eq:def_tau_eps}
\tau_\eps =\inf \{t \geq 0, \dist(\Xi_t) \leq \eps \},
\end{equation}
where $\dist(\Xi_t)$ denotes the distance between the position of $\Xi$ at time $t$ and the boundary of $D$.
\begin{Def}[Weak supersolution in the case of a random terminal time] \label{def:sol_sing_BSDE_rtt}
We say that a triple of processes $(Y,\psi,M)$ is a supersolution to the BSDE \eqref{eq:bsde_2} with singular terminal condition $Y_\tau = \xi$ if it satisfies:
\begin{enumerate}
\item $M\in \cM^\perp$, $\psi \in G_{loc}(\pi)$ and there exists some $\ell > 1$ such that for all $t\ge 0$ and $\eps>0$:
$$\E \left( \sup_{s\in [0,t]} |Y_{s \wedge \tau_\eps}|^\ell + \int_0^{t \wedge \tau_\eps} \int_\cZ |\psi_s(z)|^\ell \mu(dz) ds + [M ]^{\ell/2}_{t \wedge \tau_\eps} \right) < +\infty;$$
\item $Y$ is bounded from below by a process $\bar Y \in \bS^2(0,\tau)$;
\item for all $0\leq s \leq t$ and $\eps>0$:
\begin{eqnarray*}
Y_{s \wedge \tau_\eps}  & = & Y_{t \wedge \tau_\eps} + \int_{s \wedge \tau_\eps}^{t \wedge \tau_\eps}  f(u,Y_u,\psi_u)du - \int_{s \wedge \tau_\eps}^{t \wedge \tau_\eps} \int_\cZ \psi_u(z) \tpi(dz,du) - \int_{s \wedge \tau_\eps}^{t \wedge \tau_\eps} dM_u.
\end{eqnarray*}
\item On the set $\{t\ge \tau\}$: $Y_t=\xi, \psi=M=0$ a.s.\ and $ \ds \liminf_{t \to +\infty} Y_{t \wedge \tau} \geq \xi$ a.s.
\end{enumerate}
We say that $(Y,\psi,M)$ is a minimal supersolution to the BSDE \eqref{eq:bsde_2} if for any other supersolution $(Y',\psi',M')$ we have $Y_t\le Y'_t$ a.s.\ for any $t>0$.
\end{Def}

\begin{Thm}\label{thm:main_thm_2}
If $\tau$ is the exit time given by \eqref{eq:def_stop_time}, under Assumptions \emph{\textbf{(A')}} there exists a minimal supersolution $(Y,\psi,M)$ to \eqref{eq:bsde_2} with singular terminal condition $Y_\tau=\xi$.
\end{Thm}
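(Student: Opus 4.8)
The plan is to reproduce, for the random horizon $\tau=\tau_D$, the four-step scheme of the deterministic case, replacing the explicit a priori bound \eqref{eq:a_priori_estimate_Y_L} of Proposition \ref{prop:upper_bound_Y_L} by a Keller--Osserman barrier built from the distance to $\partial D$. Concretely I would split the argument into the analogues of Propositions \ref{prop:exist_solution_trunc_BSDE}, \ref{prop:upper_bound_Y_L}, \ref{thm:exists_sing_sol} and \ref{prop:minimality}: (i) existence, uniqueness and a uniform lower bound for the truncated solutions $(Y^L,\psi^L,M^L)$ of \eqref{eq:truncated_bsde} with terminal data $\xi\wedge L$ at time $\tau$; (ii) a uniform-in-$L$ upper bound that is finite strictly before $\tau$; (iii) passage to the limit $L\to\infty$; and (iv) minimality.

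For step (i), the new feature is the random horizon, so the relevant input is the theory of monotone BSDEs with random terminal time from \cite{krus:popi:14}. Condition \textbf{B}, namely $\E(e^{\rho\tau})<\infty$ for some $\rho>\delta^*$ with $\delta^*$ as in \eqref{eq:def_delta_min}, is precisely the exponential integrability required to solve such a BSDE in the appropriate weighted spaces, given the monotonicity constant $\chi$ in \textbf{A1} and the Lipschitz constant $K=\|\vartheta\|_{L^2_\mu}$ in \textbf{A2}; together with \textbf{A3'} (with $m>h^*$, $h^*$ from \eqref{eq:def_h_min}) this yields existence and uniqueness of $(Y^L,\psi^L,M^L)$. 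The boundedness of $\xi^-$ and $(f^0)^-$ in \textbf{A4'} produces, exactly as in Proposition \ref{prop:exist_solution_trunc_BSDE}, a process $\bar Y\in\bS^2(0,\tau)$ bounding every $Y^L$ from below, and comparison gives $Y^L\le Y^N$ for $L\le N$, so $Y:=\lim_{L\to\infty}Y^L$ is well defined.

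The heart of the proof, and the main obstacle, is step (ii). Example \ref{ex_explosion} shows no uniform bound can hold for an arbitrary $\tau$, so one exploits that $\tau$ is the first exit time of the diffusion $\Xi$ from the regular bounded set $D$. I would construct a spatial barrier $V_t=v(\Xi_t)$ with $v(x)=C\,\dist(x)^{-2/(q-1)}$, the classical Keller--Osserman exponent, using that $\partial D$ is $C^2$ so that $\dist(\cdot)$ is $C^2$ in a collar of $\partial D$. Since $W$ is orthogonal to $\pi$, Itô's formula shows that $V$ carries no jump part and solves a BSDE with driver $-\mathcal{A}v(\Xi_t)$, where $\mathcal{A}=\mathcal{L}+b\cdot\nabla$ is the full generator of $\Xi$. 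The super-linear absorption \textbf{A5}, together with the boundedness of $\eta$ and $f^0$ in \textbf{A6'}, reduces the supersolution property to a pointwise inequality of the form $\mathcal{A}v(x)\le c_0\,v(x)^q-\bar f$; with the chosen exponent both dominant terms scale like $\dist(x)^{-2q/(q-1)}$, the second-order part of $\mathcal{A}$ controlling the drift near $\partial D$ (here the non-degeneracy/drift conditions behind Lemma \ref{lem:suff_cond_B} enter), so the inequality holds for $C$ large. Since $v(\Xi_t)\to+\infty$ as $t\uparrow\tau$, it dominates the ($L$-dependent) upper bound of $Y^L$ near $\tau$, and the comparison principle of \cite{krus:popi:14} yields $Y^L_t\le v(\Xi_t)$ a.s.\ for every $L$, a bound finite on $\{t<\tau\}$. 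This is the generalized Keller--Osserman inequality (Proposition \ref{prop:Keller-Osserman}).

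For step (iii) I would localise along the stopping times $\tau_\eps$ of \eqref{eq:def_tau_eps}: on $[0,\tau_\eps]$ one has $\dist(\Xi)\ge\eps$, so the barrier keeps $(Y^L)$ bounded above by $C\eps^{-2/(q-1)}$ and below by $\bar Y$, and the stability estimate of Lemma \ref{lem:appendix_5}, combined with \textbf{A3'}, shows that $(Y^L,\psi^L,M^L)$ is Cauchy and converges in $\bS^\ell$, $L^\ell_\pi$ and $\cM^\ell$ on each $[0,\tau_\eps]$ to a limit $(Y,\psi,M)$ satisfying items (1)--(3) of Definition \ref{def:sol_sing_BSDE_rtt}. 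Quasi-left continuity of $\bF$ ensures $M^L$ has no jump at $\tau$, whence $\lim_{t\to\infty}Y^L_{t\wedge\tau}=\xi\wedge L$ and therefore $\liminf_{t\to\infty}Y_{t\wedge\tau}\ge\xi$; setting $Y=\xi$, $\psi=M=0$ on $\{t\ge\tau\}$ completes item (4). Finally, step (iv) is the verbatim analogue of Proposition \ref{prop:minimality}: for a competing supersolution $(Y',\psi',M')$ the difference $Y'-Y^L$ solves a linear BSDE whose driver is nonpositive by \textbf{A1}--\textbf{A2}, and the linear representation with the nonnegative stochastic exponential $\zeta\in\bH^k$, followed by Fatou's lemma on $[0,\tau_\eps]$ and $\eps\downarrow0$, gives $Y'_s\ge Y^L_s$; letting $L\to\infty$ yields minimality.
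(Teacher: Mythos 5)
Your overall architecture — truncation, a Keller--Osserman barrier built from the distance to $\partial D$, localization along the stopping times $\tau_\eps$ of \eqref{eq:def_tau_eps}, and a minimality argument copied from Proposition \ref{prop:minimality} — is exactly the paper's, and your steps (i), (iii), (iv) correspond to Proposition \ref{prop:exists_sol_trunc_BSDE} and the final proposition of Section \ref{bsde_rand_time} (your exponent $-2/(q-1)$ also agrees with the paper's $-2(p-1)$). The genuine gap is in step (ii), precisely where you invoke comparison. Your barrier $v(\Xi_t)=C\,\dist(\Xi_t)^{-2/(q-1)}$ explodes at time $\tau$, so the triple $(v(\Xi),0,Z)$ lies in none of the spaces for which the comparison principle of \cite{krus:popi:14} is stated; ``$\xi\wedge L\le +\infty$ at $\tau$'' is not an admissible terminal comparison, and the informal claim that $v(\Xi)$ ``dominates the $L$-dependent upper bound of $Y^L$ near $\tau$'' cannot be converted into a comparison on $[0,\tau]$ without circularity: to compare on $[0,\sigma]$ for a stopping time $\sigma<\tau$ you must first dominate $Y^L_\sigma$, which is what you are trying to prove (and after $\sigma$ the diffusion may return to the interior, where the crude $L$-dependent bound is useless against $C\,\dist^{-2/(q-1)}$). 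The paper's resolution is the actual crux of Proposition \ref{prop:Keller-Osserman}: regularize, setting $g=(1-\varphi)\dist+R\varphi+\epsilon$ (which simultaneously repairs the fact that $\dist$ is $C^2$ only in a collar of $\partial D$, not across the cut locus) and $\Phi=C g^{-2(p-1)}$. This barrier is globally $C^2$ and \emph{finite at $\tau$}, with terminal value $C\epsilon^{-2(p-1)}$; choosing $\epsilon=\epsilon(L)$ so small that $L\le C\epsilon^{-2(p-1)}$ makes the terminal comparison legitimate, the comparison principle then yields $Y^{L,+}_{t\wedge\tau}\le \Phi(\Xi_{t\wedge\tau})$, and since $g\ge\dist$ the resulting bound $C\,\dist^{-2(p-1)}(\Xi_{t\wedge\tau})$ is uniform in both $\epsilon$ and $L$. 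Without this (or an equivalent localizing) device, your step (ii) does not go through.

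A second, smaller error: the pointwise supersolution inequality ($H\ge 1$ in the paper's notation) does \emph{not} use the non-degeneracy or drift conditions of Lemma \ref{lem:suff_cond_B}. Those are only sufficient conditions for Condition \textbf{B}, i.e.\ $\E(e^{\rho\tau})<+\infty$, which is already an assumption in \textbf{(A')}. In the barrier computation, the absorption term coming from \textbf{A5} carries the factor $C^{p-1}/\|\eta\|_\infty^{q-1}$ (here \textbf{A6'} enters), while all geometric terms — $g\nabla g\cdot b$, $\|\sigma\nabla g\|^2$, $g\,\trace(\sigma\sigma^*D^2g)$ — are merely bounded on $\overline D$, uniformly in $\epsilon$; taking $C$ large closes the inequality with no ellipticity whatsoever. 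If your version of the argument genuinely needed the hypotheses of Lemma \ref{lem:suff_cond_B}, you would be proving a strictly weaker statement than Theorem \ref{thm:main_thm_2}, which assumes only \textbf{(A')} and the form \eqref{eq:def_stop_time} of $\tau$.
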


As in Section \ref{bsde_det_time} we first consider the truncated BSDE \eqref{eq:truncated_bsde}.
\begin{Prop} \label{prop:exists_sol_trunc_BSDE}
Assume that Assumptions \emph{\textbf{(A')}} hold. Then there exists for each $L>0$ a solution $(Y^L,\psi^L,M^L)\in \bS^2(0,\tau) \times L^2_\pi(0,\tau)\times \cM^2(0,\tau)$ to the BSDE \eqref{eq:truncated_bsde} with terminal condition $Y^L_\tau=\xi \wedge L$.
\end{Prop}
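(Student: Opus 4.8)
The plan is to reduce the random-horizon truncated BSDE to the existing well-posedness theory for BSDEs with a monotone generator on a general filtration, namely Theorem~1 in \cite{krus:popi:14}, exactly as was done for the deterministic horizon in Proposition~\ref{prop:exist_solution_trunc_BSDE}. The key point is that the truncation $L$ makes the terminal condition $\xi\wedge L$ bounded and the generator $f^L$ well-behaved, so the singularity of $\xi$ plays no role here; condition \textbf{B} and the diffusion structure of $\tau$ are needed only later for the uniform upper bound, not for this existence statement. First I would verify that all structural hypotheses of \cite{krus:popi:14} are met on the stochastic interval $[0,\tau]$. From \textbf{A1} the generator $y\mapsto f(t,y,\psi)$ is monotone (with constant $\chi$, not necessarily zero, but a finite $\chi$ is all the reference requires), and from \textbf{A2} (equivalently Remark~\ref{rem:f_Lip}) the map $\psi\mapsto f(t,y,\psi)$ is Lipschitz with constant $K=\|\vartheta\|_{L^2_\mu}$ uniformly in $(\omega,t,y)$. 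These monotonicity and Lipschitz properties transfer verbatim to $f^L$ since $f^L$ differs from $f$ only by the additive, $y$- and $\psi$-independent term $(f^0_t\wedge L)-f^0_t$.

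Next I would check the integrability conditions that \cite{krus:popi:14} imposes on the terminal datum and on the generator evaluated at zero. The terminal value $\xi\wedge L$ is bounded above by $L$ by construction, and bounded below by $-\xi^-$, which is bounded by \textbf{A4'}; hence $\xi\wedge L\in L^2(\F_\tau)$ (indeed in $L^\infty$). For the generator at zero, $f^L(t,0,0)=f^0_t\wedge L$, whose positive part is bounded by $L$ and whose negative part $(f^0_t)^-$ is bounded by \textbf{A4'}, so the required square-integrability $\E\int_0^\tau |f^L(t,0,0)|^2\,dt<+\infty$ follows from $\E(e^{\rho\tau})<+\infty$ in \textbf{B} (in fact $\E[\tau]<+\infty$ suffices here). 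Finally the local continuity-in-$y$ bound comes from \textbf{A3'}: for $|y|\le j$ one has $|f^L(t,y,0)-f^L(t,0,0)|=|f(t,y,0)-f^0_t|\le U_t(j)$, which lies in $L^1((0,n)\times\Omega)$ for each $n$, giving the required local integrability of the generator's modulus of continuity.

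With all the hypotheses of Theorem~1 in \cite{krus:popi:14} verified on $[0,\tau]$, that theorem directly yields, for each fixed $L>0$, a unique solution $(Y^L,\psi^L,M^L)$ in the space $\bS^2(0,\tau)\times L^2_\pi(0,\tau)\times(\cM^2(0,\tau)\cap\cM^\perp)$ to the truncated BSDE \eqref{eq:truncated_bsde} with $Y^L_\tau=\xi\wedge L$. The only genuine subtlety, and the step I expect to be the main obstacle, is the passage from a deterministic to a random (and a priori unbounded) terminal time: one must confirm that the a priori estimates underlying \cite{krus:popi:14} still close when $\tau$ replaces a fixed horizon $T$. This is precisely where the exponential moment condition \textbf{B}, $\E(e^{\rho\tau})<+\infty$, enters, since it controls the exponential weights $e^{(2\chi+K^2)t}$ that arise from the monotonicity constant $\chi$ and the Lipschitz constant $K$ in the standard It\^o/Gronwall estimate; the definition of $\delta^*$ in \eqref{eq:def_delta_min} is exactly calibrated so that $\rho>\delta^*$ makes these weighted estimates finite. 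Once this integrability bookkeeping is in place the result follows, and the harder uniform-in-$L$ upper bound is deferred to the Keller--Osserman-type argument of the subsequent propositions.
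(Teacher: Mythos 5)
Your structural checks (monotonicity from \textbf{A1}, Lipschitz continuity from \textbf{A2}, boundedness of $\xi\wedge L$ and of $f^L(t,0,0)$ from \textbf{A4'}, local integrability from \textbf{A3'}) are correct, and you are also right that the diffusion structure of $\tau$ plays no role here --- the paper itself remarks after the proposition that the argument works for any stopping time satisfying \textbf{B} and \textbf{A3'}. However, there is a genuine gap: you appeal to Theorem~1 of \cite{krus:popi:14}, which is the \emph{deterministic}-horizon result, whereas the proposition needs the random-terminal-time existence theorem (Theorem~3 of \cite{krus:popi:14}), and the hypotheses of that theorem are not the ones you verify. They are not a plain square-integrability $\E\int_0^\tau|f^L(t,0,0)|^2dt<\infty$; they require (i) a \emph{weighted} integrability $\E\int_0^\tau e^{\delta t}\left(|\xi\wedge L|^r+|f^L(t,0,0)|^r\right)dt<\infty$ for some $r>1$ with $\delta=r\left[\chi+\frac{K^2}{2((r-1)\wedge 1)}\right]<\rho$ and $\frac{r\delta}{\rho-\delta}<m$ --- the existence of such an $r$ is exactly the content of Lemma~\ref{lem:optimal_delta_h}, and this is where \emph{both} $\rho>\delta^*$ from \textbf{B} and the quantitative threshold $m>h^*$ from \textbf{A3'} enter; and (ii), crucially, an integrability condition on the generator evaluated along the martingale representation of the terminal datum: writing $\xi^L_t=\E[\xi\wedge L\,|\,\F_t]$ and $l$ for the jump integrand in the representation of $\xi\wedge L$, one must show $\E\int_0^\tau e^{\delta t}|f^L(t,\xi^L_t,l_t)|^r dt<\infty$. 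This second condition never appears in your proposal, yet verifying it is the bulk of the paper's proof: it uses the boundedness of $\xi^L$, the fact that $l\in L^{m'}_\pi(0,\tau)$ for every $m'>1$, and H\"older's inequality with exponents $h,\hbar$ (with $(h-1)(\hbar-1)=1$) chosen so that $\delta h<\rho$ and $r\hbar\le m$ --- the latter constraint being precisely why \textbf{A3'} demands $m>h^*$ rather than mere local integrability.

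Your closing paragraph flags the passage from deterministic to random horizon as ``the main obstacle'' and then asserts that condition \textbf{B} closes the estimates once ``this integrability bookkeeping is in place.'' That assertion \emph{is} the proposition: without exhibiting the exponent $r$ (Lemma~\ref{lem:optimal_delta_h}), without the condition along the martingale representation of $\xi\wedge L$, and without the H\"older argument that trades \textbf{B} against \textbf{A3'}, the reduction to \cite{krus:popi:14} does not go through. As written, the proposal is a correct plan with the decisive steps missing rather than a proof.
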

\begin{proof}
We check that all assumptions of Theorem 3 in \cite{krus:popi:14} are satisfied. The driver $f^L$ (c.f. \eqref{eq:generator_trunc_BSDE}) of the BSDE \eqref{eq:truncated_bsde} satisfies the monotonicity condition \textbf{A1}
$$(f^L(t,y,\psi)-f^L(t,y',\psi))(y-y') \leq \chi |y-y'|^2$$
a.s. for any $(t,y,y',\psi) \in [0,T]\times \R^2\times L^2_\mu$. Moreover, from \textbf{A2}, $f^L$ is Lipschitz continuous w.r.t. $\psi$. By Condition \textbf{A3'}, $f^L$ satisfies
\begin{equation*}
\forall j > 0,\ \forall n \in \N,\quad \sup_{|y|\leq j} (|f^L(t,y,0)-f^L(t,0,0)|) \in L^1(\Omega \times (0,n)).
\end{equation*}
Moreover $|\xi\wedge L|$ and $f^L(t,0,0) = f^0_t \wedge L$ are bounded from Assumption \textbf{A4'}. The conditions \textbf{B} and \textbf{A3'} imply that there exists $r >1$ such that 
$$\delta = r \left[ \chi + \frac{K^2}{2((r-1)\wedge 1)} \right] < \rho \quad \mbox{and} \quad \frac{r\delta}{\rho - \delta} < m$$
(see Lemma \ref{lem:optimal_delta_h} in Appendix for the proof). Hence  
\begin{equation}\label{eq:cond_int_rnd}
\E \int_0^\tau e^{\delta t} (|\xi\wedge L|^r + |f^L(t,0,0)|^r) dt < +\infty.
\end{equation}

Next, let $\xi^L_t=\E[\xi\wedge L|\mathcal F_t]$ and let $(\Gamma,l,N)$ be given by the martingale representation of $\xi\wedge L$
$$\xi\wedge L=\E[\xi\wedge L]+\int_0^\infty\Gamma_sdW_s+\int_0^\infty \int_{\mathcal Z}l_s(z)\tilde \pi (dz,ds)+N_\tau.$$
Since $\xi\wedge L$ is bounded (by $L$ for $L$ large enough since $\xi^-$ is supposed to be bounded), the process $\xi_t$ is also bounded by $L$. Using Conditions \textbf{A1} and \textbf{A2} we obtain for some constant $C$ (depending on $r$) which will change from line to line:
\begin{eqnarray*}
\E\left[\int_0^\tau e^{\delta t}|f^L(t,\xi_t,l_t)|^r dt\right]&\le& C \E\left[\int_0^\tau e^{\delta t}|f(t,\xi_t,l_t)-f^0_t|^r dt\right] + C \E \int_0^\tau e^{\delta t} |f^0_t \wedge L|^{r} dt \\
& \leq &C \E\left[\int_0^\tau e^{\delta t}\|l_t\|^r_{L^2_\mu} dt\right] + C \E\left[\int_0^\tau e^{\delta t}|U_t(L)|^r dt\right]   \\
& + & C \E \int_0^\tau e^{\delta t} |f^0_t \wedge L|^{r} dt .
\end{eqnarray*}
Since $f^0$ is bounded, using \textbf{A4'} as in Inequality \eqref{eq:cond_int_rnd}, one can show that the last term is finite. By H\"older inequality, for any $h > 1$ and $\hbar > 1$ such that $(h-1)(\hbar-1)=1$
$$ \E\left[\int_0^\tau e^{\delta t}\|l_t\|^r_{L^2_\mu}  dt\right] \leq \left( \E \int_0^\tau e^{\delta h t} dt \right)^{1/h} \left( \E \int_0^\tau \|l_t\|^{r \hbar}_{L^2_\mu} dt \right)^{1/\hbar}.$$
But since $\xi \wedge L$ is bounded, the process $l$ coming from the martingale representation is in any $L^m_\pi(0,\tau)$, $m>1$. Hence choosing $h$ close enough to 1, this term is also finite. We proceed similarly for the remaining term:
$$\E\left[\int_0^\tau e^{\delta t}|U_t(L)|^r dt\right]\leq \left( \E \int_0^\tau e^{\delta h t} dt \right)^{1/h} \left( \E \int_0^\tau |U_t(L)|^{r\hbar} dt \right)^{1/\hbar}.$$
From Hypotheses \textbf{B} and \textbf{A3'} we can choose $h$ and $\hbar$ such that $\delta h < \rho$ and $r \hbar \leq m$.

Hence the assumptions of Theorem 3 in \cite{krus:popi:14} hold and there exists a solution $(Y^L,\psi^L,M^L)$ to the BSDE \eqref{eq:truncated_bsde} with terminal condition $Y_\tau=\xi \wedge L$. More precisely for any $0\leq t \leq T$
\begin{eqnarray*} \nonumber
Y^L_{t\wedge \tau} & = & Y^L_{T\wedge \tau} + \int_{t\wedge \tau}^{T\wedge \tau} \left[ f(s,Y^L_s,\psi^L_s) + (\gamma_s \wedge L) \right] ds \\
&& \qquad  - \int_{t\wedge \tau}^{T\wedge \tau}\int_\cZ \psi^L_s(z) \tpi(dz,ds) - \int_{t\wedge \tau}^{T\wedge \tau}d M^L_s,
\end{eqnarray*}
and $Y^L_t = \xi \wedge L$ on the set $\{ t \geq \tau\}$.
\end{proof}

Observe that the proof of Proposition \ref{prop:exists_sol_trunc_BSDE} does not use the fact that $\tau$ is a first hitting time but works for every stopping time $\tau$ that satisfies the integrability conditions \textbf{B} and \textbf{A3'}. Moreover if we assume
\begin{equation}\label{eq:f_growth_psi_rnd} 
|f(t,0,\psi)| \leq K^f,
\end{equation}
for some constant $K^f$, then in \textbf{B} we need simply $\rho > \chi$ (see Remark 2 in \cite{krus:popi:14}).

The next example shows that further assumptions on $\tau$ are necessary in order to ensure that the family $Y^L$ is uniformly bounded from above. Therefore we will assume the particular form \eqref{eq:def_stop_time} of $\tau$ in the sequel.

\begin{Example}\label{ex_explosion}
Assume that $\widetilde f(t,y,\psi)=-|y|^2$ and $\xi=\infty$. We assume that the filtration $\F$ supports a stopping time $\tau$ such that $\E\left[\frac 1\tau\right]=\infty$ and that satisfies the integrability conditions {\rm \textbf{B}} and \eqref{eq:cond:int_eta_tau}. This holds for example for all stopping times that have a continuous density function $f$ on $\mathbb{R}_+$ with $f(0)>0$. In particular, one can take $\tau$ to be the first jump time of a Poisson process, in which case $\tau$ is exponentially distributed. For each $L>0$ let $Y^L$ denote the solution to BSDE \eqref{eq:truncated_bsde} constructed in Proposition \ref{prop:exists_sol_trunc_BSDE}. Next, we derive a lower bound for $Y^L$. To this end let $X_t=\exp(-\int_0^tY^L_sds)$. From It\^o's formula we obtain
$$dY^L_tX_t^2=-(Y^L_tX_t)^2dt+Z^L_tX_t^2dW_t.$$
In particular, this implies $Y_0^L=\E\left[\int_0^\tau \dot X_s^2ds+LX_\tau^2\right]$. Next, fix a realization $\omega \in  \Omega$. Consider the deterministic control problem of minimizing the functional $\int_0^{\tau(\omega)}\dot x^2(s)ds+Lx^2(\tau(\omega))$ over functions $x:[0,\tau(\omega)]\to \R$ starting in $x(0)=1$ and being absolutely continuous. Using Pontryagin's maximum principle one can show that the trajectory $x(s)=\frac {\tau(\omega)-s+1/L}{\tau(\omega)+1/L}$ is optimal in this deterministic problem. In particular, it follows that
$$\int_0^{\tau(\omega)}\dot x^2(s)ds+Lx^2(\tau(\omega))=\frac 1{\tau(\omega)+1/L}\le \int_0^{\tau(\omega)} \dot X_s^2(\omega)ds+LX_{\tau(\omega)}^2(\omega)$$
Taking expectations yields $Y^L_0\ge \E\left[\frac 1{\tau+1/L}\right]$ and consequently we have by monotone convergence $\liminf_{L\to \infty}Y_0^L\ge \E\left[\frac 1\tau\right]=\infty$.
\end{Example}

The preceding example shows that we cannot expect to obtain a finite supersolution to \eqref{eq:bsde_2} with singular terminal condition and random terminal time if the terminal time occurs too suddenly. Therefore we restrict here attention to the case where $\tau$ is the first hitting time of a diffusion. We introduce the signed distance function $\dist:\R^d\to \R$ of $D$, which is defined by $\dist(x)=\inf_{y \notin D}\|x-y\|$ if $x\in D$ and $\dist(x)=-\inf_{y\in D}\|x-y\|$ if $x\notin D$. The next result is a Keller-Osserman type inequality (c.f. \eqref{uppboundrandtime} and see \cite{kell:57,osse:57}): Using analytical properties of the diffusion near the boundary $\partial D$, allows us to bound at each time $t$ the value of process $Y^L_t$ against the distance of the diffusion $\Xi$ to the boundary $\partial D$.

\begin{Prop}\label{prop:Keller-Osserman}
If $\tau$ is the exit time given by \eqref{eq:def_stop_time}, under Assumptions \emph{\textbf{(A')}} the solution processes $Y^L$ constructed in Proposition \ref{prop:exists_sol_trunc_BSDE} are bounded uniformly in $L$: There exists a process $\bar Y \in \bS^2(0,\tau)$ and a constant $C$ such that:
\begin{equation}\label{uppboundrandtime}
\bar Y_{t\wedge \tau} \leq Y_{t\wedge \tau}^L\le \frac C{ \dist(\Xi_{t\wedge \tau})^{2(p-1)}}.
\end{equation}
\end{Prop}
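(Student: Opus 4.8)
**

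The plan is to construct a deterministic function of the signed distance that serves as a supersolution to the relevant PDE near the boundary and then use a comparison argument at the BSDE level to transfer this bound to $Y^L$. The lower bound $\bar Y_{t\wedge\tau}\le Y^L_{t\wedge\tau}$ is already available: by Assumption \textbf{A4'} the negative parts of $\xi$ and $f^0$ are bounded, so one can construct $\bar Y\in\bS^2(0,\tau)$ exactly as in the proof of Proposition \ref{prop:exist_solution_trunc_BSDE}, using comparison (Proposition 4 in \cite{krus:popi:14}) with the BSDE whose data involve $-\xi^-$ and the negative part of $f^0$. So the real content is the upper bound.

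For the upper bound I would first reduce to the pure analytic obstacle. Using Assumption \textbf{A5}, the growth of the driver in $y$ is dominated by $-\frac{p-1}{\eta_t^{q-1}}|y|^q$, and by Assumption \textbf{A6'} the process $\eta$ is bounded, say by some constant, so that on the region $y\ge 0$ one has $f(t,y,\psi)\le -c_0\,y^q + f(t,0,\psi)$ for a strictly positive constant $c_0$. The idea (Keller--Osserman) is that a superlinear absorption term $-c_0 y^q$ forces any supersolution to stay below an explicit barrier that blows up like a negative power of the distance to $\partial D$. Concretely, I would look for a function $u(x)=\Phi(\dist(x))$, with $\Phi(r)\sim C r^{-2(p-1)}$ as $r\downarrow 0$, such that $v_t=u(\Xi_t)$, together with the martingale generated by applying It\^o's formula, is a supersolution of the truncated BSDE. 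Since $q$ is the H\"older conjugate of $p$, the exponent $2(p-1)$ is precisely the one that balances the second-order term $\mathcal L u$ (which behaves like $\dist^{-2p}$ through the two derivatives of $\Phi$ against the uniformly elliptic/bounded diffusion coefficients) against the absorption $-c_0 u^q$ (which behaves like $\dist^{-2(p-1)q}=\dist^{-2p}$, using $(p-1)q=p$). This matching of powers is exactly what makes the ansatz work, and it is where the regularity of $\partial D$ (class $C^2$) enters: near the boundary $\dist$ is $C^2$ and $|\nabla \dist|=1$, so $\mathcal L u$ can be computed and controlled.

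The key steps in order are: (i) choose $\Phi$ on a small one-sided neighborhood $\{0<\dist<\delta_0\}$ of the form $\Phi(r)=Cr^{-2(p-1)}$ (adjusted by lower-order terms), and verify, using $|\nabla\dist|=1$ and the $C^2$-bound on $D^2\dist$ together with boundedness of $b,\sigma$, that $\frac{1}{2}\trace(\sigma\sigma^* D^2 u)+b\cdot\nabla u -c_0 u^q + \|f(\cdot,0,\cdot)\|_\infty \le 0$ for $C$ large and $\delta_0$ small; (ii) extend $u$ to all of $D$ by a large constant on $\{\dist\ge\delta_0\}$, chosen above the uniform bound $L(1+T)$-type estimate is not available here, so instead I would treat the interior region by compactness: on $\{\dist\ge\delta_0\}$ the diffusion spends bounded expected time, and Assumption \textbf{B} (via $\E e^{\rho\tau}<\infty$) controls the contribution, so a constant barrier suffices there; (iii) apply It\^o to $u(\Xi)$ to write it as an It\^o process and invoke the comparison principle for the truncated BSDE (Proposition 4 in \cite{krus:popi:14}) on each interval $[0,t\wedge\tau_\eps]$, comparing $Y^L$ against the barrier with terminal condition dominating $\xi\wedge L$ near the boundary; (iv) let $\eps\downarrow 0$ and read off \eqref{uppboundrandtime}.

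The main obstacle I expect is making the comparison rigorous up to the exit time, since the barrier $u(\Xi_t)$ blows up as $t\uparrow\tau$ and the truncated solution has a bounded terminal value $\xi\wedge L$ at $\tau$, not at $\tau_\eps$. The clean way is to run the comparison only on $[0,\tau_\eps]$, where everything is bounded, using that $u(\Xi_{\tau_\eps})\ge L$ for $\eps$ small enough (since $u\to\infty$ at the boundary) so the barrier dominates $\xi\wedge L$ at the stopped time; then the comparison principle yields $Y^L_{t\wedge\tau_\eps}\le u(\Xi_{t\wedge\tau_\eps})$, and since the bound is uniform in $L$ (the absorption constant $c_0$ and the barrier do not depend on $L$) one passes to $\eps\downarrow0$. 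A secondary technical point is gluing the near-boundary barrier to the interior constant so that the resulting $u$ is a genuine global supersolution (i.e. the inequality $\mathcal L u + b\cdot\nabla u - c_0 u^q + \mathrm{const}\le 0$ holds across the junction); this typically requires smoothing the ansatz so that $u$ is $C^2$ and the junction is handled by choosing the interior constant large while keeping the differential inequality valid, and it is here that the interplay between the diameter $R$, the ellipticity constant $\alpha$, and $\delta^*$ from \eqref{eq:def_delta_min} becomes relevant.
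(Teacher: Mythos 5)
Your core idea is the same as the paper's: a Keller--Osserman barrier of the form $C\,\dist^{-2(p-1)}$, the exponent matching $(p-1)q=p$ between $\mathcal L u$ and the absorption $-c_0u^q$, and a BSDE comparison; your lower bound is also exactly the paper's. However, step (iii) of your plan has a genuine gap. You run the comparison on $[0,\tau_\eps]$ and argue that the barrier "dominates $\xi\wedge L$ at the stopped time". But the terminal value of $Y^L$ at $\tau_\eps$ is \emph{not} $\xi\wedge L$: the truncated terminal condition is attained only at $\tau$, and at the intermediate time $\tau_\eps$ the random variable $Y^L_{\tau_\eps}$ is just the (unknown) solution value, for which no pointwise bound is available at this stage — Proposition \ref{prop:exists_sol_trunc_BSDE} only gives $Y^L\in\bS^2(0,\tau)$, and there is no analogue here of the deterministic-case bound $Y^L_t\le L(1+T)$. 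So closing the comparison on $[0,\tau_\eps]$ requires $Y^L_{\tau_\eps}\le u(\Xi_{\tau_\eps})$, which is essentially what you are trying to prove. The paper avoids this circularity by shifting the barrier: it sets $g=(1-\varphi)\dist+R\varphi+\epsilon$ (a smooth interpolation between $\dist+\epsilon$ near $\partial D$ and a constant in the interior, via a cutoff $\varphi$) and $\Phi=Cg^{-2(p-1)}$. Thanks to the $+\epsilon$, the barrier $\Phi$ is a \emph{bounded} global $C^2$ supersolution, equal to $C\epsilon^{-2(p-1)}$ on $\partial D$, so the comparison can be run up to the true terminal time $\tau$, where $Y^L_\tau=\xi\wedge L\le L\le C\epsilon^{-2(p-1)}$ once $\epsilon$ is chosen small enough (depending on $L$); the final bound $\Phi\le C\dist^{-2(p-1)}$ holds with $C$ uniform in $\epsilon$ and $L$. (Your route could be repaired by first proving an a priori bound $Y^L\le\max\{L,c\}$ by comparison with a large constant — available since $\eta$ and $f^0$ are bounded by \textbf{A6'} — but that step is missing from your write-up.)

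A secondary misdirection concerns the interior region: invoking expected occupation times and Assumption \textbf{B} there is neither needed nor would it produce a supersolution inequality. After the gluing, the functions $g,\nabla g,D^2g$ and the coefficients $b,\sigma$ are bounded on the compact set $\overline D$ uniformly in $\epsilon$, so one simply chooses $C\ge C_0$ large enough that $-(p-1)Cg^{-2p}H(\Xi_t)+\|f^0\|_\infty\le 0$ holds on all of $\overline D$; the argument is purely algebraic, and neither the ellipticity of $\sigma\sigma^*$ nor $\delta^*$ from \eqref{eq:def_delta_min} enters this step (those quantities appear only in Lemma \ref{lem:suff_cond_B}, i.e.\ in sufficient conditions for \textbf{B}, not in the barrier construction).
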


\begin{proof}
First observe that the lower bound of $Y^L$ follows as in Proposition \ref{prop:exist_solution_trunc_BSDE} from a comparison theorem with a BSDE with terminal condition $-\xi^-$ and driver $g(t,y,\psi)=(f(t,y,\psi)-f^0_t)-(f^0_t)^-$.

For the upper bound, let $\mu>0$ and introduce the set $D_\mu=\{x\in \R^d , \ |\dist(x)|\le \mu\}$. Then it follows from Lemma 14.16 in \cite{gilb:trud:01} that there exists a positive constant $\mu$ such that $\dist \in C^2(D_\mu)$. Since $D$ is bounded there exists a constant $R>0$ such that $0\le \dist(x)\le R$ for all $x\in \overline D$. Let $\varphi\in C^\infty(\R^d,[0,1])$ with $\varphi=1$ on $\R^d\setminus D_\mu$ and $\varphi=0$ on $D_{\mu/2}$. For $0<\epsilon\le 1$ we define a function $g\in C^2(\R^d,\R_+)$ such that $g=(1-\varphi)\dist+R\varphi+\epsilon$ on $\overline D$. Since $g\ge \epsilon$ on $\overline D$, there exists a function $\Phi \in C^2(\R^d,\R_+)$ satisfying $\Phi=Cg^{-2(p-1)}$ on $\overline D$ for any $C>0$. Observe that $\Phi$ is bounded from above by $C\dist^{-2 (p-1)}$. Next we apply It\^o's formula to the process $\Phi(\Xi_{t\wedge \tau})$. For every $t<\tau$ this yields
\begin{eqnarray*}
 && d\Phi(\Xi_t)=(p-1)\frac {\Phi^q(\Xi_t)}{\eta^{q-1}_t}dt+\nabla \Phi(\Xi_t)\sigma(\Xi_t)dW_t \\
&&\quad+\left(\nabla \Phi(\Xi_t)b(\Xi_t)+\frac 12 \trace(\sigma\sigma^*(\Xi_t)D^2\Phi(\Xi_t))-(p-1)\frac {\Phi^q(\Xi_t)}{\eta^{q-1}_t}\right)dt \\
&&=\left[ (p-1)\frac {\Phi^q(\Xi_t)}{\eta^{q-1}_t}  - f^0_t \right] dt +\nabla \Phi(\Xi_t)\sigma(\Xi_t)dW_t\\
&&+\left[f^0_t + \nabla \Phi(\Xi_t)b(\Xi_t)+\frac 12 \trace(\sigma\sigma^*(\Xi_t)D^2\Phi(\Xi_t))-(p-1)\frac {\Phi^q(\Xi_t)}{\eta^{q-1}_t}\right]dt .
\end{eqnarray*}
On $\overline D$ we have
\begin{eqnarray*}
 \Phi^r&=&C^q g^{- 2q(p-1)}= C^q g^{- 2p}\\
\nabla \Phi &=&-2(p-1)C g^{-2p+1}\nabla g \\
\frac{\partial ^2 \Phi}{\partial x_i \partial x_j}&=&-2(p-1)(-2p+1)C g^{- 2p}\frac {\partial g}{\partial x_i}\frac{\partial g}{\partial x_j}-2(p-1)C g^{-2p+1}\frac{\partial ^2 g}{\partial x_i \partial x_j}
\end{eqnarray*}
For $t\le \tau$ let
\begin{eqnarray*}
G_t&=&\nabla \Phi(\Xi_t) b(\Xi_t)+\frac 12 \trace(\sigma\sigma^*(\Xi_t)D^2\Phi(\Xi_t))-(p-1)\frac {\Phi^q(\Xi_t)}{\eta^{q-1}_t} \\
& = & -(p-1)Cg^{-2p}(\Xi_t)  H(\Xi_t)
\end{eqnarray*}
with
\begin{eqnarray*}
H(\Xi_t) & = & \frac{C^{p-1}}{\eta^{q-1}_t}+2(g\nabla g b)(\Xi_t)+(-2p+1)\|\sigma (\Xi_t)\nabla g(\Xi_t)\|^2+\left[g\trace(\sigma\sigma^*D^2g)\right](\Xi_t) \\
& \geq & \frac{C^{p-1}}{\|\eta\|^{q-1}_\infty}+2(g\nabla g b)(\Xi_t)+(-2p+1)\|\sigma (\Xi_t)\nabla g(\Xi_t)\|^2+\left[g\trace(\sigma\sigma^*D^2g)\right](\Xi_t), 
\end{eqnarray*}
since from condition \textbf{A6'}, $\eta$ is bounded. Now $\overline D$ is a compact set. Thus the continuous functions $b$ and $\sigma$ are bounded on $\overline D$. Moreover, the functions $g, \nabla g$ and $D^2g $ are bounded on $\overline D$ uniformly in $\epsilon$. Hence there exists $C_0>0$ which does not depend on $\epsilon$ such that for any $C \geq C_0$, for every $t\ge 0$ and on $\overline D$ we have $H(\Xi_{t})\ge 1$.  

Again by Assumption \textbf{A6'}, the process $f^0$ is bounded from above. Hence for some $C$ large enough:
$$-\mathcal{G}_t = G_t +f^0_t = -(p-1)Cg^{-2p}(\Xi_t)  H(\Xi_t) + f^0_t \leq -(p-1)Cg^{-2p}(\Xi_t)  + \|f^0\|_\infty \leq 0.$$

Now the constant $C$ is fixed. The process $\Phi(\Xi)$ satisfies
\begin{eqnarray*}
\Phi(\Xi_{t\wedge \tau}) & =& \Phi(\Xi_{T\wedge \tau}) + \int_{t\wedge \tau}^{T\wedge \tau}\left[ -(p-1)\frac {\Phi^q(\Xi_s)}{\eta^{q-1}_s}  +f^0_s\right] ds \\
& & \quad + \int_{t\wedge \tau}^{T\wedge \tau} \mathcal{G}_s ds-\int_{t\wedge \tau}^{T\wedge \tau}\nabla \Phi(\Xi_s)\sigma(\Xi_s)dW_s
\end{eqnarray*}
for all $0\le t\le T$, with $\mathcal{G}_s \geq 0$. Let us denote by $Z$ the martingale
$$Z_t = \int_0^t \nabla \Phi(\Xi_s)\sigma(\Xi_s)dW_s.$$
The triple $(\Phi(\Xi),0,Z)$ is solution of the BSDE with the generator:
$$v(t,y,\psi)=-(p-1)\frac {y|y|^{q-1}}{\eta^{q-1}_t}  + f^0_s + f(t,0,\psi) + \mathcal{G}_t$$
and terminal condition $\Phi(\Xi_{T\wedge \tau})=\frac C{\epsilon^{2(p-1)}}$ on $\{T\ge \tau\}$.
Condition \textbf{A5} on $f$ implies that 
$$f^L(t,\Phi(\Xi_t),0) \leq v(t,\Phi(\Xi_t),0).$$ 
Moreover we choose $\epsilon$ small enough such that $L \le C/\eps^{(p-1)/2}$. Hence $Y_{T\wedge \tau}^{L,+}\le \Phi(\Xi_{T\wedge \tau})$ on $\{T\ge \tau\}$.
The comparison principle (c.f.\ Remark 3 in \cite{krus:popi:14}) leads to: for any $t \geq 0$, $Y_{t\wedge \tau}^{L,+}\le \Phi(\Xi_{t\wedge \tau})$ and by construction $ \Phi(\Xi_{t\wedge \tau})\le C\dist^{-2 (p-1)}(\Xi_{t\wedge \tau})$. This achieves the proof.
\end{proof}

Now as in Section \ref{bsde_det_time}, we can define a process $Y$ as the limit of the increasing sequence $Y^L$ to obtain the minimal supersolution of \eqref{eq:bsde_2}. The next proposition completes the proof of Theorem \ref{thm:main_thm_2}.
\begin{Prop}
Suppose that $\tau$ is given by \eqref{eq:def_stop_time} and that Assumptions \emph{\textbf{(A')}} are in force and let $(Y^L,\psi^L,M^L)$ denote the solution of BSDE \eqref{eq:truncated_bsde} obtained in Proposition \ref{prop:exists_sol_trunc_BSDE}. Then there exists a process $(Y,\psi,M)$ such that $Y^L_t$ converges a.s. to $Y_t$, $\psi^L$ converges in $L^2_\pi(0,\tau_\epsilon)$ to $\psi$ and $M^L$ converges in $\cM^2(0,\tau_\eps)$ to $M$ for any $\eps >0$. The limit process $(Y,\psi,M)$ is the minimal supersolution for the BSDE \eqref{eq:bsde_2} with terminal condition $\xi$.
\end{Prop}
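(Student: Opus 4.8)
The plan is to reproduce the deterministic construction of Propositions \ref{thm:exists_sing_sol} and \ref{prop:minimality}, but with the a priori bound \eqref{eq:a_priori_estimate_Y_L} replaced by the Keller--Osserman estimate \eqref{uppboundrandtime} and with every argument localized on the stochastic intervals $[0,\tau_\eps]$. By the comparison principle (Proposition 4 in \cite{krus:popi:14}) the family $(Y^L)$ is nondecreasing in $L$, so I would set $Y_t=\lim_{L\to\infty}Y^L_t$ as the (possibly infinite) increasing limit; the lower bound $\bar Y\in\bS^2(0,\tau)$ furnished by Proposition \ref{prop:Keller-Osserman} is inherited by $Y$. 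Finiteness of $Y$ strictly before $\tau$ is then immediate from \eqref{uppboundrandtime}: on $\{t<\tau\}$ we have $\dist(\Xi_t)>0$, whence $Y_t\le C\,\dist(\Xi_t)^{-2(p-1)}<+\infty$ a.s.

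Next I would fix $\eps>0$. By definition of $\tau_\eps$ one has $\dist(\Xi_s)\ge\eps$ for all $s\le\tau_\eps$, so \eqref{uppboundrandtime} gives the bound $Y^L_{s\wedge\tau_\eps}\le C\eps^{-2(p-1)}$ uniformly in $L$; together with the lower bound this makes $(Y^L_{\cdot\wedge\tau_\eps})_L$ uniformly bounded, and dominated convergence yields $Y^L\to Y$ a.s.\ and in $\bS^2(0,\tau_\eps)$. For the remaining components I would apply the a priori estimate of Lemma \ref{lem:appendix_5} on $[0,t\wedge\tau_\eps]$ to the differences $\what Y=Y^N-Y^L$, $\what\psi$, $\what M$. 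The essential simplification with respect to the deterministic case is that, since $f^0$ is bounded by \textbf{A6'}, the driver truncations coincide ($f^0\wedge N=f^0\wedge L=f^0$) as soon as $L,N\ge\|f^0\|_\infty$, so the only source term in the estimate is $\E|\what Y_{t\wedge\tau_\eps}|^2$, which tends to $0$ by the previous step. This shows that $(\psi^L)$ and $(M^L)$ are Cauchy in $L^2_\pi(0,\tau_\eps)$ and $\cM^2(0,\tau_\eps)$, with limits $\psi$ and $M$; letting $t\to\infty$ (so that $t\wedge\tau_\eps\to\tau_\eps$) promotes this to convergence on all of $[0,\tau_\eps]$. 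Passing to the limit $L\to\infty$ in \eqref{eq:truncated_bsde} stopped at $\tau_\eps$ then shows that $(Y,\psi,M)$ solves the BSDE on each $[s\wedge\tau_\eps,t\wedge\tau_\eps]$ with the integrability required in conditions 1--3 of Definition \ref{def:sol_sing_BSDE_rtt}.

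For the terminal behavior (condition 4) I would set $Y_t=\xi$, $\psi=M=0$ on $\{t\ge\tau\}$, which is consistent since $\lim_L Y^L_\tau=\lim_L \xi\wedge L=\xi$. Lower semicontinuity $\liminf_{t\nearrow\tau}Y_t\ge\xi$ is argued exactly as in Proposition \ref{thm:exists_sing_sol}: the time $\tau$ is announced by the stopping times $\tau_\eps\nearrow\tau$ and is therefore predictable, so by quasi-left continuity of $\bF$ the martingale $M^L$ has no jump at $\tau$ and $\lim_{t\nearrow\tau}Y^L_t=Y^L_\tau=\xi\wedge L$; then $\liminf_{t\nearrow\tau}Y_t\ge\xi\wedge L$ for every $L$, and $L\to\infty$ gives the claim. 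Minimality is obtained by adapting Proposition \ref{prop:minimality}: given any supersolution $(Y',\psi',M')$, I would linearize $\what Y=Y'-Y^L$ using \textbf{A1} and \textbf{A2} and represent $\what Y_{s\wedge\tau_\eps}$ as a conditional expectation of $\what Y_{t\wedge\tau_\eps}\,\Gamma_{s,t}$ (the source $(f^0-L)^+$ vanishing once $L\ge\|f^0\|_\infty$), where $\Gamma\ge0$ lies in $\bH^k(0,\tau)$. Since $Y'\ge\bar Y\in\bS^2$ and $Y^L$ is bounded on $[0,\tau_\eps]$, the product $\what Y\,\Gamma$ is bounded below by an integrable process, so Fatou's lemma (first $t\to\infty$, then $\eps\to0$) combined with $\liminf_{t\nearrow\tau}\what Y_t\ge\xi-\xi\wedge L\ge0$ yields $Y'_s\ge Y^L_s$, and letting $L\to\infty$ gives $Y'\ge Y$.

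The main obstacle I anticipate is the degeneracy of the uniform bound \eqref{uppboundrandtime} as $\eps\to0$: because $C\eps^{-2(p-1)}$ explodes near $\tau$, none of the convergence statements can be globalized to $[0,\tau)$, and each estimate---the Cauchy property of $(\psi^L,M^L)$, the identification of the limit BSDE, and the Fatou step in the minimality proof---must be performed on the fixed intervals $[0,\tau_\eps]$ and only afterwards patched together by sending $\eps\to0$. The secondary delicate point is the control of the solution at the random (predictable) terminal time $\tau$ in place of a deterministic horizon $T$, which is precisely what the quasi-left continuity hypothesis on $\bF$ is designed to handle.
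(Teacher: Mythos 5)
Your proposal is correct and follows essentially the same route as the paper's own (deliberately terse) proof: monotone limit via comparison, localization on $[0,\tau_\eps]$ where the Keller--Osserman bound of Proposition \ref{prop:Keller-Osserman} gives the uniform bound $C\eps^{-2(p-1)}$, the a priori estimates of Lemma \ref{lem:appendix_5} for the Cauchy property of $(\psi^L,M^L)$, quasi-left continuity of $\bF$ at the predictable time $\tau$ for the terminal inequality, and minimality as in Proposition \ref{prop:minimality}. You even make explicit two points the paper leaves implicit, namely that $f^L=f$ once $L\ge\|f^0\|_\infty$ under \textbf{A6'} (so only the terminal truncation matters) and that all estimates must be patched together by letting $\eps\to 0$ after the limit in $L$.
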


\begin{proof}
We proceed as in the proof of Proposition \ref{thm:exists_sing_sol}. We outline the main steps. First observe that $Y_t^L$ converges a.s. to a limit process $Y$ by a comparison principle (c.f.\ Remark 3 in \cite{krus:popi:14}).
Recall the definition of the stopping times $\tau_\eps$, $\eps > 0$, $\tau_\eps =\inf \{t \geq 0, \dist(\Gamma_t) \leq \eps \}.$
We have $\dist(\Gamma_{t\wedge\tau_\eps}) \geq \eps$ for $\eps$ small enough. Moreover $\tau_\eps$ converges to $\tau$ when $\eps$ goes to zero.
Using this sequence of times $\tau_\eps$, the whole sequence $(Y^L,\psi^L,M^L)$ converges to $(Y,\psi,M)$ on $\bS^2(0,\tau_\eps) \times L^2_\mu(0,\tau_\eps) \times \cM^2(0,\tau_\eps)$ for all $\eps > 0$. The main argument is that by Proposition \ref{prop:Keller-Osserman} on the interval $(0,\tau_\eps)$, the process $Y^L$ is uniformly bounded by $C/ \eps^{2(p-1)}$. Moreover $(Y,\psi,M)$ satisfies for any $\eps > 0$ and any $0 \leq t \leq T$
\begin{eqnarray*} \nonumber
Y_{t\wedge \tau_\eps} & = & Y_{T\wedge \tau_\eps} + \int_{t\wedge \tau_\eps}^{T\wedge \tau_\eps} f(s,Y_s,\psi_s) ds \\
&& \qquad  - \int_{t\wedge  \tau_\eps}^{T\wedge \tau_\eps}\int_\cZ \psi_s(z) \tpi(dz,ds) - \int_{t\wedge \tau_\eps}^{T\wedge \tau_\eps}d M_s.
\end{eqnarray*}
Since the filtration is supposed be to left-continuous, we have a.s. $\lim_{t\to +\infty} Y^L_{t\wedge \tau} = \xi \wedge L.$ Therefore we obtain the following behaviour of $Y$ at the terminal time $\liminf_{t\to +\infty} Y_{t\wedge \tau} \geq \xi.$ The minimality of the solution follows by the same arguments as in Proposition \ref{prop:minimality}.
\end{proof}

\section{Optimal Position targeting} \label{sect:back_control}

\subsection{Problem formulation}

Let us now describe the stochastic control problem. We assume that the setting from Section \ref{setting and notation} is given. Moreover, we suppose that \textbf{the measure $\mu$ is finite}. As in Section \ref{sect:exist_min_sol} we fix some $p> 1$ and denote by $q=1/(1-1/p)$ its H\"older conjugate. Let $\tau$ be a $\mathbb F$ stopping time. For any $t \in \R_+$ and $x\in \R$, we denote by $\cA(t,x)$ the set of progressively measurable processes $(X_s)_{s\ge 0}$ that satisfy the dynamics
\begin{equation}\label{eq:state_dyn}
X_s =x +\int_t^{s \vee t} \alpha_u du +\int_t^{s\vee t} \int_\cZ \beta_u(z) \pi(dz,du)
\end{equation}
for any $s  \ge 0 $ and for some $\al \in L^1(t,\infty)$ a.s. and $\beta \in G_{loc}(\pi)$. Observe that for all $X \in \cA(t,x)$ it holds that $X_s=x$ for all $s\le t$. 
We consider the stochastic control problem to minimize the functional\footnote{We use the convention that $0\cdot \infty := 0$}
\begin{equation}\label{eq:control_pb}
J(t,X) = \E \left[  \int_{t\wedge \tau}^\tau \left( \eta_s |\alpha_s|^p + \gamma_s |X_s|^p + \int_\cZ \lambda_s(z) |\beta_s(z)|^p \mu(dz) \right) ds  + \xi |X_\tau|^p \bigg| \F_t \right]
\end{equation}
over all $X\in \cA(t,x)$. The random variable $\xi$ is supposed to be non negative and may take the value $\infty$ with positive probability. Observe that if for $x>0$ there exists $X \in \cA(t,x)$ such that $J(t,X)<\infty$, then $\tau>t$ a.s. and $X$ satisfies almost surely that 
\begin{equation} \label{eq:terminal_cons}
X_\tau  \1_{\xi=\infty}= 0.
\end{equation}
This way we impose implicitly a terminal state constraint on the set of admissible controls.
For future reference we define the set $\cS$ by $\cS = \{ \xi = +\infty\}.$
The coefficient processes $(\eta_t)_{t\ge 0}$, $(\gamma_t)_{t\ge 0}$ and $(\lambda_t)_{t\ge 0}$ are nonnegative progressively measurable c\`adl\`ag processes. The process $\lambda$ is $\widetilde{\mathcal{P}}$-measurable with values in $[0,+\infty]$.

We introduce the random field $v$ that represents for each initial condition $(t,x)$ the minimal value of $J$
\begin{equation}\label{eq:value_fct}
v(t,x) = \essinf_{X\in \cA(t,x)} J(t,X).
\end{equation}

Theorem \ref{thm:main_thm_3} below summarizes the main results of this section. It shows that the value function $v$ and optimal controls of the control problem \eqref{eq:value_fct} are characterized by the BSDE \eqref{eq:bsde_contr_prob} with singular terminal condition
\begin{equation} \tag{\ref{eq:bsde_contr_prob}}
dY_t  = (p-1)  \frac{Y_t^{q}}{\eta_t^{q-1}} dt + \Theta(t,Y_t,\psi_t) dt - \gamma_t dt + \int_\cZ \psi_t(z) \tpi(dz,dt) + d M_t
\end{equation}
where the function $\Theta$ is given by
\begin{equation}\label{eq:generator}
\Theta(t,y,\psi) = \int_\cZ (y+  \psi(z)) \left( 1- \frac{\lambda_t(z)}{\left((y+  \psi(z))^{q-1}+ \lambda_t(z)^{q-1} \right)^{p-1}}  \right) \1_{y+  \psi(z) \geq 0} \ \mu(dz).
\end{equation}

Again we distinguish two cases. In the first case we assume that $\tau$ is deterministic and impose some integrability assumptions on the coefficient processes $(\eta_t)_{t\ge 0}$ and $(\gamma_t)_{t\ge 0}$.

\noindent \textbf{Assumption (C1).}
The stopping time $\tau$ is a.s.\ equal to a deterministic constant $T>0$. The process $\eta$ is positive, the process $\gamma$ is non negative, such that for some $\ell > 1$
$$ \E\left[\int_0^T (\eta_t + (T-t)^p\gamma_t )^\ell dt\right]<\infty \quad \text{ and } \quad \E\left[\int_0^T\frac 1{\eta_t^{q-1}}dt\right]<\infty.$$
\hfill $\diamond$

In the second case we assume that $\tau$ is given by \eqref{eq:def_stop_time} as the first hitting time of a diffusion. We need to impose some stronger boundedness conditions on $\eta$ and $\gamma$ compared to \textbf{(C1)}.

\noindent \textbf{Assumption (C2).}
We have $\tau=\tau_D$ and there exists $\rho >\mu(\cZ)$ such that $\E e^{\rho \tau}<\infty$. The processes $\eta$ and $\gamma$ are bounded from above, $\eta$ is positive and satisfies the integrability conditions
 \begin{equation}\label{low_bound_eta_stopp_time}
\E\left[\int _0^n\frac 1{\eta_t^{q-1}}dt\right] + \E\left[ \int_0^\tau \frac 1{\eta_t^{m(q-1)}} dt \right] <\infty
\end{equation}
for all $n\in \N$ and for some $m$ satisfying:
$$m > \frac{2\rho}{\rho - \mu(\cZ) + (\sqrt{\rho} - \sqrt{2\mu(\cZ)})\mathbf{1}_{\rho > 2\mu(\cZ)}}.$$ 
The process $\gamma$ is non negative.
\hfill $\diamond$

Lemma \ref{lem:suff_cond_B} gives sufficient conditions on the coefficients of the forward SDE \eqref{eq:forwardSDE} such that $\E e^{\rho \tau}<\infty$ holds. 

\begin{Thm} \label{thm:main_thm_3}
Let Assumptions \emph{\textbf{(C1)}} or \emph{\textbf{(C2)}} hold. Then there exists a minimal supersolution $(Y,\psi,M)$ to \eqref{eq:bsde_contr_prob} with singular terminal condition $Y_\tau=\xi$. Set $Y_s=\xi$ for all $s\ge \tau$. For all $(t,x)\in \R_+\times \R$ it holds $\P$-a.s.\ that $v(t,x)=Y_tx^p$. Moreover, for every $(t,x)\in \R_+\times \R$ the process $X$ satisfying the linear dynamics
$$X_s=x-\int_t^{s\vee t}\left(\frac{Y_u}{\eta_u}\right)^{q-1}X_udu-\int_t^{s\vee t}X_{u-}\int_\cZ\zeta_u(z)\pi(dz,du),$$
with
$$\zeta_u(z)=\frac{(Y_{u^-} + \psi_u(z))^{q-1}}{ \left[ (Y_{u^-} + \psi_u(z))^{q-1} + \lambda_u(z)^{q-1}\right]}$$
belongs to $\cA(t,x)$, satisfies the terminal state constraint \eqref{eq:terminal_cons} if $t<\tau$ and is optimal in \eqref{eq:value_fct}.
\end{Thm}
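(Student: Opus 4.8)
The plan is to apply the existence theory developed in Section \ref{sect:exist_min_sol} to the specific BSDE \eqref{eq:bsde_contr_prob} and then to run a verification argument built on the Young inequality.

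\emph{Existence of the supersolution.} First I would rewrite \eqref{eq:bsde_contr_prob} in the generic form \eqref{eq:bsde_2} with driver $f(t,y,\psi) = -(p-1) y^{q}/\eta_t^{q-1} - \Theta(t,y,\psi) + \gamma_t$ (defined for $y\ge 0$ and extended monotonically for $y<0$). Since $\Theta(t,0,0)=0$ one has $f^0_t=\gamma_t\ge 0$, so that $(f^0)^-=0$ and, because $\xi\ge 0$, also $\xi^-=0$. Next I would verify the standing assumptions: the polynomial decay \textbf{A5} is read off directly from the first term with the given $q$ and $\eta$; since $\mu(\cZ)<\infty$ and $\Theta$ is Lipschitz in $\psi$, condition \textbf{A2} holds with a bounded $\vartheta$ (hence \textbf{A7}), and the monotonicity \textbf{A1} follows from the structure of $\Theta$ with a finite constant $\chi$; the integrability conditions \textbf{A3}/\textbf{A6} (resp.\ \textbf{A3'}/\textbf{A6'}) and \textbf{A4}/\textbf{A4'} come from \textbf{(C1)} (resp.\ \textbf{(C2)}), and \textbf{B} is precisely the requirement $\rho>\mu(\cZ)$ in \textbf{(C2)}. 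Theorem \ref{thm:main_thm_1} under \textbf{(C1)} and Theorem \ref{thm:main_thm_2} under \textbf{(C2)} then provide a minimal supersolution $(Y,\psi,M)$, which is nonnegative since $(f^0)^-=\xi^-=0$ (Propositions \ref{prop:exist_solution_trunc_BSDE} and \ref{prop:Keller-Osserman}).

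\emph{Lower bound $v(t,x)\ge Y_t x^p$.} Fix $(t,x)$ and $X\in\cA(t,x)$ with $J(t,X)<\infty$. Applying the It\^o formula for jump processes to $s\mapsto Y_s|X_s|^p$ on $[t,\tau_\eps]$ and combining the dynamics of \eqref{eq:bsde_contr_prob} with \eqref{eq:state_dyn}, the continuous drift contains $(p-1)\tfrac{Y_s^{q}}{\eta_s^{q-1}}|X_s|^p + pY_s|X_s|^{p-2}X_s\alpha_s$, which by the Young inequality $c^p+(p-1)y^q\ge pcy$ (with $c=\eta_s^{1/p}|\alpha_s|$, $y=Y_s\eta_s^{-1/p}|X_s|^{p-1}$) dominates $-\eta_s|\alpha_s|^p$; the jump terms combine with $\Theta$ and, through the pointwise Young inequality built into \eqref{eq:generator}, dominate $-\int_\cZ\lambda_s(z)|\beta_s(z)|^p\mu(dz)$; the $-\gamma_s$ term gives $-\gamma_s|X_s|^p$. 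After localizing to remove the local-martingale part and taking conditional expectation,
$$Y_t|x|^p \le \E\left[\int_t^{\tau_\eps}\left(\eta_s|\alpha_s|^p + \gamma_s|X_s|^p + \int_\cZ\lambda_s(z)|\beta_s(z)|^p\mu(dz)\right)ds + Y_{\tau_\eps}|X_{\tau_\eps}|^p\,\bigg|\,\F_t\right].$$
Letting $\eps\downarrow 0$, I would use $\liminf_{s\to\tau}Y_s\ge\xi$ with Fatou's lemma to pass the boundary term to $\E[\xi|X_\tau|^p\mid\F_t]$ and monotone convergence for the running cost, obtaining $Y_t x^p\le J(t,X)$ and hence $Y_t x^p\le v(t,x)$.

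\emph{Attainment and optimality.} I would then show that the feedback control $X$ of the statement is admissible, satisfies the terminal constraint, and turns every inequality above into an equality. Its linear SDE admits an explicit Dol\'eans-Dade-type solution with nonnegative integrand, and the coefficients $(Y_u/\eta_u)^{q-1}$ and $\zeta_u(z)$ are exactly the minimizers in the two Young inequalities, so the chain in the previous paragraph becomes an equality and $J(t,X)=Y_t x^p$ once finiteness of the cost is checked; the latter follows from the a priori upper bounds on $Y$ in \eqref{eq:a_priori_estimate_Y_L} and \eqref{uppboundrandtime} together with the integrability of $\eta$ and $\lambda$ in \textbf{(C1)}/\textbf{(C2)}. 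For the terminal constraint $X_\tau\1_{\cS}=0$, on $\cS=\{\xi=+\infty\}$ the drift $(Y_u/\eta_u)^{q-1}$ blows up as $u\to\tau$ and drives $\log|X_u|\to-\infty$, so $X_\tau=0$; quantitatively this rests on the explosion rate of $Y$ near $\tau$ encoded in \eqref{eq:a_priori_estimate_Y_L}/\eqref{uppboundrandtime} and on the explicit representation of $X$. Combining with the lower bound yields $v(t,x)=Y_t x^p$ and optimality of $X$. I expect the main obstacle to be the passage $\eps\downarrow 0$ at the singular time: both the Fatou step for the boundary term and, above all, the proof that the candidate control actually liquidates on $\cS$ hinge on the precise blow-up speed of $Y$; ensuring $L^1$-integrability up to $\tau$ of the feedback drift $(Y_u/\eta_u)^{q-1}X_u$ is the secondary technical difficulty.
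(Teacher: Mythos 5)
Your overall skeleton (existence via Corollary \ref{coro:exist_sing_control}, a candidate feedback control, and the sandwich $Y_t|x|^p \ge J(t,X^*) \ge v(t,x) \ge Y_t|x|^p$) matches the paper, but the two load-bearing steps are gapped. First, the lower bound $v(t,x)\ge Y_t x^p$: your direct It\^o/Young verification against an arbitrary admissible $X$ makes $N_s = Y_s|X_s|^p + \int_t^s(\text{running cost})\,du$ a nonnegative local \emph{sub}martingale, and "localizing to remove the local-martingale part" only yields $\E[N_{\sigma_n\wedge\tau_\eps}\,|\,\F_t]\ge N_t$ along a localizing sequence $(\sigma_n)$. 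Removing the localization needs uniform integrability, and Fatou runs in the wrong direction here: a nonnegative local submartingale need not be a submartingale (unlike the supermartingale case). Since $Y$ is unbounded near $\tau$ and $X$ is only known to have finite cost, this UI is precisely the hard point, and your proposal does not address it. The paper avoids it entirely by penalization: $J(t,X)\ge J^L(t,X)\ge v^L(t,x)=Y^L_t|x|^p$ (Proposition \ref{prop:optimal_control_uncons}), where $Y^L$ is bounded and, by Lemma \ref{lem:decreasing_control}, one may restrict to monotone controls bounded by $x$, so all stochastic integrals are genuine martingales; the identity $v^L=Y^L_t|x|^p$ is then proved by a convexity/integration-by-parts argument (Lemma \ref{Lemma:mart_property}), and finally $L\to\infty$ uses that $Y$ is the increasing limit of the $Y^L$.

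Second, the terminal constraint $X^*_\tau\1_\cS=0$: you argue that the drift $(Y_u/\eta_u)^{q-1}$ "blows up and drives $\log|X^*_u|\to-\infty$", resting on the rate "encoded in \eqref{eq:a_priori_estimate_Y_L}/\eqref{uppboundrandtime}". This is a non sequitur on two counts. Those estimates are \emph{upper} bounds on $Y$; to force $\int_t^\tau (Y_u/\eta_u)^{q-1}du=+\infty$ in the exponential representation \eqref{eq:optim_contr_sing} you would need a \emph{lower} bound on the blow-up rate, which the paper never establishes and which cannot hold pointwise on $\cS$ (the set $\cS$ is only $\F_\tau$-measurable, so $Y_t$ for $t<\tau$ need not be large on $\cS$). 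Moreover, mere divergence $Y_u\to\infty$ does not imply divergence of its time integral (compare $(\tau-u)^{-1/2}$). The paper's actual device is rate-free: by Lemma \ref{Lemma:mart_property}, $\theta_s = Y_s|X^*_s|^{p-1}-Y_{t\wedge\tau_\eps}|X^*_{t\wedge\tau_\eps}|^{p-1}+\int_{t\wedge\tau_\eps}^s\gamma_u|X^*_u|^{p-1}du$ is a nonnegative local martingale on $[\![t\wedge\tau_\eps,\tau[\![$, hence a supermartingale converging a.s.\ as $s\to\tau$; then
$$0\le X^*_s\le \left(\frac{\theta_s}{pY_s}\right)^{q-1}\longrightarrow 0 \quad\text{on }\{t<\tau\}\cap\cS,$$
because $\liminf_{s\to\tau}Y_s=+\infty$ there. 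You correctly sensed that liquidation on $\cS$ is the crux, but the tool you propose (a blow-up speed for $Y$) is not available; the supermartingale convergence argument is the missing idea. The remainder of your attainment step (equality for $X^*$, Fatou and monotone convergence as $\eps\downarrow 0$, and closing the chain with the lower bound) is consistent with the paper once these two gaps are repaired.
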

The optimal process $X^*$ is given explicitely by
\begin{equation}\label{eq:optim_contr_sing}
X^*_s = x\exp \left[ - \int_t^{s\vee t} \left(  \frac{Y_u}{\eta_u} \right)^{q-1} du \right] \exp \left[ \int_t^{s\vee t} \int_\cZ  \ln \left( 1 - \zeta_u(z)\right) \pi(dz,du) \right].
\end{equation}

To prove Theorem \ref{thm:main_thm_3} we first conclude from Theorems \ref{thm:main_thm_1} or \ref{thm:main_thm_2} that there exists a minimal supersolution to \eqref{eq:bsde_contr_prob}. We then consider a variant of the minimization problem \eqref{eq:value_fct}, where we penalize any non zero terminal state by $(\xi\wedge L)|X_\tau|^p$ and thus omit the constraint $X_\tau\1_\cS=0$ on the set of admissible controls. We show that optimal controls for this unconstrained minimization problem admit a representation in terms of the solutions $Y^L$ of a truncated version of \eqref{eq:bsde_contr_prob}. We then use this result to derive an optimal control for \eqref{eq:value_fct}.

\subsection{Existence of a minimal supersolution}

Observe that BSDE \eqref{eq:bsde_contr_prob} is a special case of \eqref{eq:bsde_2} with generator $f$ given by
$$f(t,y,\psi) = -(p-1)  \frac{y|y|^{q-1}}{\eta_t^{q-1}} - \Theta(t,y,\psi) + \gamma_t.$$
Recall that in this section $\mu$ is supposed to be a finite measure, thus $\Theta$ (given by \eqref{eq:generator}) is well-defined. Here we have that $f^0_t = f(t,0,0) = \gamma_t$. 
For simplicity we denote by $\varpi$ the function
$$\varpi(t,y,\phi) = (y+  \phi) \left( 1- \frac{\lambda_t(z)}{\left((y+  \phi)^{q-1}+ \lambda_t(z)^{q-1} \right)^{p-1}}  \right) \1_{y+  \phi \geq 0}$$
such that
$$\Theta(t,y,\psi) = \int_\cZ \varpi(t,y,\psi(z)) \mu(dz).$$

The next result is a consequence of Theorems \ref{thm:main_thm_1} and \ref{thm:main_thm_2}.
\begin{Coro}\label{coro:exist_sing_control}
Under Assumptions \emph{\textbf{(C1)}} or \emph{\textbf{(C2)}}, the singular BSDE \eqref{eq:bsde_contr_prob} has a minimal non negative weak supersolution $(Y,\psi,M)$.
\end{Coro}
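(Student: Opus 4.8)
The plan is to deduce the corollary directly from Theorems \ref{thm:main_thm_1} and \ref{thm:main_thm_2} by checking that the generator
\[
f(t,y,\psi) = -(p-1)\frac{y|y|^{q-1}}{\eta_t^{q-1}} - \Theta(t,y,\psi) + \gamma_t
\]
of \eqref{eq:bsde_contr_prob} satisfies Assumptions \textbf{(A)} when \textbf{(C1)} holds and Assumptions \textbf{(A')} when \textbf{(C2)} holds. Here $f^0_t = f(t,0,0) = \gamma_t \geq 0$ and $\xi \geq 0$, so that $\xi^- = (f^0)^- = 0$; this immediately settles \textbf{A4}/\textbf{A4'} and, through the lower bound $\bar Y = 0$ produced in Propositions \ref{prop:exist_solution_trunc_BSDE} and \ref{prop:Keller-Osserman}, yields the non-negativity of the limit $Y$. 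Thus the only genuine work is to verify the structural hypotheses \textbf{A1}, \textbf{A2}, \textbf{A5} on $f$ and to translate the integrability requirements of \textbf{(C1)}/\textbf{(C2)} into those of \textbf{(A)}/\textbf{(A')}.

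The first step I would carry out is a structural lemma on the integrand $\varpi$ of \eqref{eq:generator}: since $\varpi(t,y,\phi)$ depends on $(y,\phi)$ only through $u=y+\phi$, it suffices to study $u\mapsto\varpi$ on $u\ge 0$. Writing the subtracted fraction as $\rho(u)=\lambda_t(z)\big(u^{q-1}+\lambda_t(z)^{q-1}\big)^{-(p-1)}$, the H\"older conjugacy $(p-1)(q-1)=1$ gives $\big(u^{q-1}+\lambda^{q-1}\big)^{p-1}\ge\lambda$, hence $\rho(u)\in[0,1]$ with $\rho(0)=1$; moreover a short computation using the same identity yields $u\,(-\rho'(u))/\rho(u)=u^{q-1}/(u^{q-1}+\lambda^{q-1})\le 1$. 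Consequently $\varpi=u\,(1-\rho(u))$ is non-negative, non-decreasing and $1$-Lipschitz in $u$, i.e.\ $\partial_u\varpi\in[0,1]$. From this single fact I read off all three structural assumptions. The difference quotient of $\varpi$ in its third argument lies in $[0,1]$, so \textbf{A2} holds with $\kappa^{y,\psi,\phi}=-\partial_\phi\varpi\in[-1,0]$ and $\vartheta\equiv 1$, whence $K=\|\vartheta\|_{L^2_\mu}=\sqrt{\mu(\cZ)}$. The polynomial term is decreasing in $y$ and, $\varpi$ being non-decreasing in $u$, the map $-\Theta$ is non-increasing in $y$; therefore $f$ is monotone in $y$ with $\chi=0$, giving \textbf{A1}. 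Finally $\Theta(t,y,\psi)\ge\Theta(t,0,\psi)$ for $y\ge 0$ reduces \textbf{A5} to the identity of the explicit polynomial terms.

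It then remains to match the integrability conditions. Under \textbf{(C1)} the pointwise bound $\sup_{|y|\le n}|f(t,y,0)-f^0_t|\le (p-1)n^q/\eta_t^{q-1}+\mu(\cZ)\,n$ gives \textbf{A3} from $\E\int_0^T\eta_t^{-(q-1)}dt<\infty$; \textbf{A6} is exactly the condition $\E\int_0^T(\eta_s+(T-s)^p\gamma_s)^\ell ds<\infty$ since $(f^0)^+=\gamma$; and \textbf{A7} is trivial because $\vartheta\equiv 1$ and $\mu$ is finite, so $\int_\cZ|\vartheta|^k\mu(dz)=\mu(\cZ)<\infty$ for every $k$. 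Under \textbf{(C2)}, boundedness of $\eta$ and $\gamma$ yields \textbf{A6'} and \textbf{A4'}, and the same bound on $U_t(j)$ together with $\E\int_0^\tau\eta_t^{-m(q-1)}dt<\infty$ and $\E\tau<\infty$ gives \textbf{A3'}. For \textbf{B}, the values $\chi=0$ and $K^2=\mu(\cZ)$ place us in the regime $2|\chi|\le K^2$ of \eqref{eq:def_delta_min}, so that $\delta^*=K^2+2\chi=\mu(\cZ)$ and the threshold $h^*$ of \eqref{eq:def_h_min} reduces precisely to the quantity bounding $m$ in \textbf{(C2)}; hence the hypothesis $\rho>\mu(\cZ)$ with $\E e^{\rho\tau}<\infty$ is Condition \textbf{B}, and the compatibility $m>h^*$ is built into \textbf{(C2)}.

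I expect the main obstacle to be the structural lemma on $\varpi$, namely proving $\partial_u\varpi\in[0,1]$ cleanly, since this one estimate simultaneously delivers \textbf{A1} with the sharp constant $\chi=0$, \textbf{A2} with $\vartheta\equiv 1$ (hence $K^2=\mu(\cZ)$) and \textbf{A5}, and it is exactly what forces the threshold $\delta^*=\mu(\cZ)$ that reappears in \textbf{(C2)}. The conjugacy $(p-1)(q-1)=1$ has to be used twice and carefully—once to bound the fraction in \eqref{eq:generator} by $1$ and once to bound its derivative. Once these constants are pinned down, the passage from \textbf{(C1)}/\textbf{(C2)} to \textbf{(A)}/\textbf{(A')} is routine, and the existence of a minimal non-negative weak supersolution follows at once from Theorems \ref{thm:main_thm_1} and \ref{thm:main_thm_2}.
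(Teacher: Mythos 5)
Your proposal is correct and takes essentially the same route as the paper's proof: both verify Assumptions \textbf{(A)} under \textbf{(C1)} and \textbf{(A')} under \textbf{(C2)} by showing the derivative of $\varpi$ lies in $[0,1]$ (hence $\chi=0$, $\kappa\in[-1,0]$, $\vartheta\equiv 1$, $K^2=\mu(\cZ)$ and $\delta^*=\mu(\cZ)$), matching the integrability hypotheses via the bound $U_t(r)=(p-1)r^q/\eta_t^{q-1}+\mu(\cZ)r$, and then invoking Theorems \ref{thm:main_thm_1} and \ref{thm:main_thm_2}. Your derivation of \textbf{A5} from the monotonicity of $\Theta$ in $y$ (rather than merely $\Theta\ge 0$) and your explicit appeal to the lower bound $\bar Y=0$ for non-negativity are, if anything, slightly more careful statements of what the paper does implicitly.
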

\begin{proof}
We have to prove that $f$ satisfies Conditions \textbf{(A)} (respectively \textbf{(A')}) if \textbf{(C1)} (respectively \textbf{(C2)}) holds.
A simple computation proves that for a fixed $(t,\psi) \in [0,T] \times L^2_\mu$ and $z \in \cZ$, the function $y \mapsto \varpi(t,y,\psi(z))$ is non decreasing and of class $C^1$ on $\R$ with a derivative bounded by 1
$$\frac{\partial \varpi}{\partial y}(t,y,\psi(z)) = \left( 1 - \frac{\lambda_t(z)^q}{\left((y+  \psi(z))^{q-1}+ \lambda_t(z)^{q-1} \right)^{p}} \right)\1_{y+\psi(z)\geq 0}.$$
Since $\eta > 0$, the condition \textbf{A1} is satisfied with $\chi=0$.

From the same argument the function $\varpi$ is Lipschitz continuous w.r.t. $\psi(z)$ and hence we obtain
$$|\Theta(t,y,\psi) - \Theta(t,y,\psi')| \leq \int_\cZ |\psi(z) - \psi'(z)| \mu(dz) \leq \mu(\cZ)^{1/2} \|\psi-\psi'\|_{L^2_\mu}.$$
Moreover for any $(t,y,\psi,\psi') \in [0,T]\times \R\times (L^2_\mu)^2$ we have
\begin{eqnarray*}
f(t,y,\psi)-f(t,y,\psi') & = & -\Theta(t,y,\psi)+\Theta(t,y,\psi') = \int_\cZ (\varpi(t,y,\psi'(z)) - \varpi(t,y,\psi(z)))\mu(dz) \\
& = & \int_\cZ (\psi(z)-\psi'(z))  \kappa_t^{y,\psi,\psi'}(z)  \mu(dz)
\end{eqnarray*}
where
$$ \kappa_t^{y,\psi,\psi'}(z) =- \frac{\varpi(t,y,\psi(z)) - \varpi(t,y,\psi'(z))}{\psi(z)-\psi'(z)}\1_{\psi(z)\neq \psi'(z)}.$$
Since $\varpi$ is non decreasing in $\psi$ with derivative bounded from above by $1$, we obtain $-1\leq  \kappa_t^{y,\psi,\psi'} \leq 0.$ Thus Conditions \textbf{A2} and \textbf{A7} hold for any $k \geq 1$. We can even note that \eqref{eq:f_growth_psi_2} (cf. Lemma \ref{lem:another_estimate} and Remark \ref{stronger_upper_bound}) is true with $K^f_t=0$. For every $r>0$ and $|y|\leq r$ we have
$$|f(t,y,0) - f^0_t| = (p-1)\frac{|y|^{q}}{\eta_t^{q-1}} + |\Theta(t,y,0)|  \leq  (p-1)\frac{|r|^{q}}{\eta_t^{q-1}} + \mu(\cZ)|r| =: U_t(r).$$
By Assumption \textbf{(C1)}, the mapping $t\mapsto U_t(r)$ is in $L^1((0,T)\times \Omega)$ and Condition \textbf{A3} holds. Condition \textbf{A4} holds since $\gamma$ and $\xi$ are non negative. Finally since $\Theta \geq 0$, Condition \textbf{A5} is satisfied and \textbf{A6} holds if Assumption \textbf{(C1)} is assumed.

A similar computation shows that under \textbf{(C2)}, Conditions \textbf{A4'} and \textbf{A6'} hold. We have here $\chi=0$ and $K^2 = \mu(\cZ)$, thus $\delta^* = \mu(\cZ)$ (see Equation \eqref{eq:def_delta_min}) and therefore the assumption $\rho > \mu(\cZ)$ implies Condition \textbf{B}. Moreover from \eqref{low_bound_eta_stopp_time}, the process $U_t(r)$ is in $L^1((0,n)\times \Omega)$ for any $n\in \N$ and satisfies
$\E \int_0^\tau |U_t(r)|^m dt <+\infty$, with $m > h^*$ (see Equation \eqref{eq:def_h_min}). Hence Corollary \ref{coro:exist_sing_control} is a direct consequence of Theorems \ref{thm:main_thm_1} or \ref{thm:main_thm_2}. Moreover, by Proposition \ref{prop:exist_solution_trunc_BSDE} (respectively Proposition \ref{prop:exists_sol_trunc_BSDE}) there exists a solution $(Y^L,\psi^L,M^L)$ of the truncated BSDE
\begin{equation}\label{eq:control_trunc_bsde}
dY^L_t = (p-1)\frac{(Y^L_t)^{1+q}}{\eta_t^{q}} dt + \Theta(t,Y^L_t,\psi^L_t) dt - (\gamma_t\wedge L) dt + \int_\cZ \psi^L_t(z) \tpi(dz,dt) +dM^L_t
\end{equation}
with terminal condition $Y^L_\tau = \xi \wedge L$. The process $(Y,\psi,M)$ is the limit as $L$ goes to $+\infty$ of $(Y^L,\psi^L,M^L)$ and is the minimal (super-)solution of the BSDE \eqref{eq:bsde_contr_prob}.
\end{proof}

\subsection{Penalization}

For $L>0$ and $(t,x)\in \R_+\times \R$ we consider the unconstrained minimization problem:
\begin{eqnarray} \nonumber
v^L(t,x) &=& \essinf_{X\in \cA(t,x)} J^L(t,X) \\ \nonumber
&  = & \essinf_{X\in\cA(t,x)}  \E \left[  \int_{t\wedge \tau}^\tau \left( \eta_s |\alpha_s|^p + (\gamma_s \wedge L) |X_s|^p + \int_\cZ \lambda_s(z) |\beta_s(z)|^p \mu(dz) \right) ds  \right. \\  \label{eq:unconstraint_value_fct}
&& \qquad \qquad  \qquad  \qquad \qquad \left. + (\xi \wedge L) |X_\tau|^p \bigg| \F_t \right].
\end{eqnarray}
\begin{Prop} \label{prop:optimal_control_uncons}
Let Assumption \emph{\textbf{(C1)}} or \emph{\textbf{(C2)}} hold and let $(Y^L, \psi^L,M^L)$ be the solution to \eqref{eq:control_trunc_bsde} with terminal condition $Y_\tau=\xi\wedge L$. Let $Y_s=L\wedge \xi$ for all $s\ge \tau$. Then for all $(t,x)\in \R_+\times \R$ the process $X^L$ satisfying the linear dynamics
$$X^L_s=x-\int_t^{s\vee t}\left(\frac{Y^L_r}{\eta_r}\right)^{q-1}X^L_rdr-\int_t^{s\vee t}X^L_{r-}\int_\cZ\zeta^L_r(z)\pi(dz,dr),$$
with
$$\zeta^L_r(z)=\frac{(Y^L_{r^-} + \psi_r(z))^{q-1}}{ \left[ (Y^L_{r^-} + \psi^L_r(z))^{q-1} + \lambda_r(z)^{q-1}\right]}$$
is optimal in \eqref{eq:unconstraint_value_fct}. Moreover, we have $v^L(t,x) = Y^L_t |x|^p$.
\end{Prop}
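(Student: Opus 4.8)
I would prove the proposition by a verification argument: applying It\^o's formula to the product $Y^L_s|X_s|^p$ and showing that its drift dominates the negative of the running cost for \emph{every} admissible control, with equality precisely for the candidate $X^L$. This yields simultaneously the lower bound $Y^L_t|x|^p\le J^L(t,X)$ for all $X\in\cA(t,x)$ (hence $Y^L_t|x|^p\le v^L(t,x)$) and the attainment $J^L(t,X^L)=Y^L_t|x|^p$, which together give optimality of $X^L$ and $v^L(t,x)=Y^L_t|x|^p$. Throughout I use that $Y^L\ge 0$ (Corollary \ref{coro:exist_sing_control}) and that the generator of \eqref{eq:control_trunc_bsde} is $f(s,y,\psi)=-(p-1)y|y|^{q-1}/\eta_s^{q-1}-\Theta(s,y,\psi)+(\gamma_s\wedge L)$.

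\textbf{The drift computation and pointwise minimization.} For $X\in\cA(t,x)$ I would compute $d(Y^L_s|X_s|^p)$ by integration by parts, noting that $Y^L$ and $|X|^p$ share jumps only through $\pi$ (since $M^L\perp\pi$). Rewriting every $\pi$-integral as a $\tpi$-integral plus its $\mu(dz)\,ds$-compensator, the predictable drift of $Y^L_s|X_s|^p$ splits into a ``continuous'' part
\[
Y^L_s\,p|X_s|^{p-1}\sgn(X_s)\,\alpha_s+(p-1)\frac{(Y^L_s)^q}{\eta_s^{q-1}}|X_s|^p
\]
and a ``jump'' part $\Theta(s,Y^L_s,\psi^L_s)|X_s|^p+\int_\cZ(Y^L_{s^-}+\psi^L_s(z))(|X_{s^-}+\beta_s(z)|^p-|X_{s^-}|^p)\mu(dz)$, the terms $\pm(\gamma_s\wedge L)|X_s|^p$ cancelling against the running cost. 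For the continuous part, the Young inequality $c^p+(p-1)y^q-pcy\ge0$ (with $c=\eta_s^{1/p}|\alpha_s|$ and $y=Y^L_s|X_s|^{p-1}/\eta_s^{1/p}$, using $(p-1)q=p$ and $q/p=q-1$) shows it is bounded below by $-\eta_s|\alpha_s|^p$, with equality exactly at $\alpha_s=-(Y^L_s/\eta_s)^{q-1}X_s$. For the jump part I would fix $z$ and minimize $\beta\mapsto(Y^L_s+\psi^L_s(z))|X_s+\beta|^p+\lambda_s(z)|\beta|^p$, which is convex when $Y^L_s+\psi^L_s(z)\ge0$; its minimizer is $\beta=-\zeta^L_s(z)X_s$ with $\zeta^L_s(z)$ as in the statement, and a direct computation (writing $A=(Y^L_s+\psi^L_s(z))^{q-1}$, $B=\lambda_s(z)^{q-1}$) shows the minimal value equals $(Y^L_s+\psi^L_s(z)-\varpi(s,Y^L_s,\psi^L_s(z)))|X_s|^p$, so that the jump part plus $\int_\cZ\lambda_s(z)|\beta_s(z)|^p\mu(dz)$ is nonnegative, vanishing exactly at $\beta_s(z)=-\zeta^L_s(z)X_s$. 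Thus the drift of $Y^L_s|X_s|^p$ is $\ge$ minus the running cost, with equality for the feedback controls defining $X^L$.

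\textbf{Integration and conclusion.} Integrating over $[t\wedge\tau,\tau_n]$ for a localizing sequence $\tau_n\uparrow\tau$ that turns the $\tpi$- and $M^L$-stochastic integrals into true martingales, and taking $\E[\,\cdot\mid\F_t]$, I obtain $\E[Y^L_{\tau_n}|X_{\tau_n}|^p\mid\F_t]+\E[\int_{t\wedge\tau}^{\tau_n}(\text{running cost})\,ds\mid\F_t]\ge Y^L_t|x|^p$, with equality when $X=X^L$. Letting $n\to\infty$, the cost term converges monotonically (all integrands nonnegative), and using $Y^L_\tau=\xi\wedge L$ together with Fatou's lemma for the terminal term yields $J^L(t,X)\ge Y^L_t|x|^p$ for every admissible $X$, hence $v^L(t,x)\ge Y^L_t|x|^p$. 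For $X=X^L$ the same passage, now with equalities in the drift, gives $J^L(t,X^L)\le Y^L_t|x|^p$; combined with the lower bound this forces $J^L(t,X^L)=v^L(t,x)=Y^L_t|x|^p$. It remains to check that $X^L$ is admissible: its dynamics are a linear (Dol\'eans--Dade type) SDE with explicit solution \eqref{eq:optim_contr_sing}, and since $Y^L$ is bounded and $\int_0^\cdot\eta_s^{-(q-1)}ds$ is finite, $\alpha^L\in L^1(t,\infty)$ and $\beta^L\in G_{loc}(\pi)$.

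\textbf{Main obstacle.} The analytic minimizations are routine; the delicate points are measure-theoretic. First, the convexity of the jump functional and the very definition of $\zeta^L$ require $Y^L_{s^-}+\psi^L_s(z)\ge0$ for $\P\otimes ds\otimes\mu$-a.e.\ $(s,z)$; this is consistent with the indicator $\1_{y+\psi\ge0}$ built into $\varpi$ and must be argued from $Y^L\ge0$ and the structure of the jumps of $Y^L$ (on $\{Y^L_{s^-}+\psi^L_s(z)<0\}$ both $\varpi$ and the feedback jump vanish, and the inequality has to be handled separately there). Second, and most importantly, the localization and the limit $n\to\infty$ must be controlled: one has to ensure the stochastic integrals are genuine martingales up to $\tau_n$ and to identify $\lim_n\E[Y^L_{\tau_n}|X_{\tau_n}|^p\mid\F_t]$ with $\E[(\xi\wedge L)|X_\tau|^p\mid\F_t]$. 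Here the boundedness of $Y^L$ (by $L(1+T)$ in the deterministic case, and the Keller--Osserman bound $C/\dist(\Xi_{\cdot\wedge\tau})^{2(p-1)}$ on each $[0,\tau_\eps]$ in the random case) together with the finiteness of the cost provides the required uniform integrability, but carrying this out near the terminal time is the main technical work.
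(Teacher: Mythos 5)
Your route is genuinely different from the paper's: you run a dynamic-programming verification argument, applying It\^o's formula to $Y^L_s|X_s|^p$ for an \emph{arbitrary} admissible $X$ and minimizing the drift pointwise (Young's inequality in $\alpha$, convex minimization in $\beta$). The paper never writes such a submartingale inequality for general controls. Instead it first proves (Lemma \ref{lem:decreasing_control}) that any control can be modified into a nonincreasing, nonnegative one with smaller cost, and then establishes optimality of $X^L$ \emph{within that class} by a Pontryagin-type convexity comparison: a first-order expansion of $|\cdot|^p$ combined with the martingale dynamics of the adjoint process $\eta_s|\alpha^L_s|^{p-1}=Y^L_s|X^L_s|^{p-1}$ from Lemma \ref{Lemma:mart_property}, and finally reads off $v^L(t,x)=Y^L_t|x|^p$ by integrating the dynamics of $Y^L(X^L)^p$. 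Your pointwise algebra is correct (the minimum of $\beta\mapsto (Y^L+\psi^L)|X+\beta|^p+\lambda|\beta|^p$ is indeed $(Y^L+\psi^L-\varpi)|X|^p$, attained at $\beta=-\zeta^L X$), and the attainment direction for $X^L$ itself works, since $|X^L_s|\le x$ and $\sup_t Y^L_t\in L^2$ give the needed domination.

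The genuine gap is in your lower bound for general admissible $X$, exactly at the point where the paper invokes Lemma \ref{lem:decreasing_control} and you do not. Your localized inequality reads
$Y^L_t|x|^p \le \E\bigl[\,Y^L_{\tau_n}|X_{\tau_n}|^p \,\big|\,\F_t\bigr]+\E\bigl[\int_{t\wedge\tau}^{\tau_n}(\text{running cost})\,ds\,\big|\,\F_t\bigr]$,
and to conclude $Y^L_t|x|^p\le J^L(t,X)$ you need $\limsup_n \E[\,Y^L_{\tau_n}|X_{\tau_n}|^p\,|\,\F_t]\le \E[(\xi\wedge L)|X_\tau|^p\,|\,\F_t]$ --- a \emph{reverse} Fatou statement. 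Fatou's lemma, which you invoke, gives the opposite inequality (it is only useful in the attainment direction $X=X^L$). The uniform integrability you claim to extract from ``boundedness of $Y^L$ together with finiteness of the cost'' is not available: $\gamma$ and $\lambda$ are merely nonnegative and may vanish, so a control of finite cost can take arbitrarily large values at intermediate times at zero cost, and finiteness of $J^L(t,X)$ gives no control on the family $\{|X_{\tau_n}|^p\}_n$. The paper's Lemma \ref{lem:decreasing_control} is precisely the missing ingredient: replacing $X$ by a nonincreasing nonnegative modification with smaller cost forces $0\le X_s\le x$, after which the terminal term is dominated by $x^p\sup_t Y^L_t\in L^1$ and the limit passage is immediate. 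Without this reduction (or a substitute admissibility argument) your verification does not close. A secondary point: your hope of ``handling separately'' the set $\{Y^L_{s^-}+\psi^L_s(z)<0\}$ is misplaced --- on that set the pointwise inequality $(Y^L+\psi^L)(|X+\beta|^p-|X|^p)+\lambda|\beta|^p\ge 0$ genuinely fails for large $\beta$ when $\lambda$ is small, so your route requires proving this set is $\P\otimes ds\otimes\mu$-null; the paper makes the same nonnegativity implicit in the definition of $\zeta^L$, but its comparison involves only two specific controls, whereas your argument needs the inequality against every $\beta$, so the burden there is strictly heavier.
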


To prove Proposition \ref{prop:optimal_control_uncons} we will make use of the two following auxiliary results. The first lemma shows that in the case $x\ge 0$ we can without loss of generality restrict attention to monotone strategies\footnote{It is straightforward to show that $v(t,x)=v(t,-x)$ for all $(t,x)\in \R_+\times \R_+$. Therefore, we restrict attention to the case $x\ge 0$ in the sequel.}. To this end we introduce the set $\cD(t,x)$, the subset of $\cA(t,x)$ containing only processes $X$ that have nonincreasing sample paths (i.e.\ $\alpha_t \leq 0$ and $\beta_t(z) \leq 0$), and that remain nonnegative.

\begin{Lemma} \label{lem:decreasing_control}
Let $x\ge 0$. Every control $X\in \cA(t,x)$ can be modified to a control $\underline{X} \in \cD(t,x)$ such that $J^L(t,X) \geq J^L(t,\underline{X})$. In particular, $v^L(t,x) = \essinf_{X\in \cD(t,x)} J^L(t,X) $.
\end{Lemma}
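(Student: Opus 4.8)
The plan is to construct, for a given $X\in\cA(t,x)$ with controls $(\alpha,\beta)$, an explicit ``running-minimum'' modification and to verify that it lies in $\cD(t,x)$ while making each of the four cost terms in $J^L$ no larger. Concretely, for $s\ge t$ I set $I_s=\inf_{t\le u\le s}X_u$ and define
$$\underline X_s=(I_s)^+,$$
with $\underline X_s=x$ for $s\le t$. Since $X_t=x\ge0$, this process starts at $x$, is nonincreasing (as $I$ is nonincreasing and $r\mapsto r^+$ is nondecreasing), is nonnegative, and satisfies $\underline X_s\le X_s^+\le|X_s|$ for every $s$ because $I_s\le X_s$. These properties already give $\underline X\in\cD(t,x)$ at the level of paths, once admissibility is checked.

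Next I would identify the controls of $\underline X$. Writing the continuous part of $X$ as the absolutely continuous process $X^c_s=x+\int_t^s\alpha_u\,du$, I use the standard fact that the measure $dI$ is carried by $\{s:X_s=I_s\}$ and that the running minimum of an absolutely continuous path is again absolutely continuous. Hence the continuous part of $\underline X$ has drift $\underline\alpha_s=\alpha_s\mathbf{1}_{\{X_s=I_s>0\}}$, so that $\underline\alpha_s\le0$ and $|\underline\alpha_s|\le|\alpha_s|$ a.e. The jumps of $\underline X$ occur only at jump times of $X$ that create a new record strictly below the previous minimum; at such a time, with mark $z$,
$$\underline\beta_s(z)=\bigl(I_{s^-}\wedge(X_{s^-}+\beta_s(z))\bigr)^+-(I_{s^-})^+,$$
which is $\tP$-measurable (a predictable function of $X_{s^-}$, $I_{s^-}$ and $\beta_s(z)$), is $\le0$, and satisfies $|\underline\beta_s(z)|\le|\beta_s(z)|$ because $X_{s^-}\ge I_{s^-}$. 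By construction $\underline X=x+\int_t^{\cdot}\underline\alpha\,du+\int_t^{\cdot}\int_\cZ\underline\beta\,\pi$, and the pointwise bounds $|\underline\alpha|\le|\alpha|$, $|\underline\beta|\le|\beta|$ yield $\underline\alpha\in L^1(t,\infty)$ and $\underline\beta\in G_{loc}(\pi)$; thus $\underline X\in\cD(t,x)$.

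Finally I would compare the costs term by term, pathwise and then under $\E[\,\cdot\mid\F_t]$, using $\eta,\gamma,\lambda\ge0$ and $p>1$. The running cost and the terminal cost decrease because $0\le\underline X_s\le|X_s|$ gives $|\underline X_s|^p\le|X_s|^p$ and $(\xi\wedge L)|\underline X_\tau|^p\le(\xi\wedge L)|X_\tau|^p$; the two effort terms decrease because $\eta_s|\underline\alpha_s|^p\le\eta_s|\alpha_s|^p$ and $\lambda_s(z)|\underline\beta_s(z)|^p\le\lambda_s(z)|\beta_s(z)|^p$ by the pointwise domination of the controls. Summing gives $J^L(t,\underline X)\le J^L(t,X)$, and since $\cD(t,x)\subset\cA(t,x)$ this yields $v^L(t,x)=\essinf_{X\in\cD(t,x)}J^L(t,X)$.

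I expect the main obstacle to be the rigorous semimartingale decomposition of $\underline X$ into its absolutely continuous drift part and its $\pi$-driven jump part together with the control bounds $|\underline\alpha|\le|\alpha|$ and $|\underline\beta|\le|\beta|$: in particular, justifying that the running minimum $I$ carries no singular continuous component beyond the drift, and that the jump sizes at new records are dominated by the corresponding $\beta_s(z)$. Once these structural facts are in hand, the four cost comparisons are immediate monotonicity arguments.
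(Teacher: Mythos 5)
Your proposal is correct, and it shares the paper's high-level strategy --- replace $X$ by a nonincreasing, nonnegative process whose controls are pointwise dominated by $(\alpha,\beta)$, then compare the four cost terms --- but the construction is genuinely different. The paper never forms the running minimum of $X$: it first truncates the controls, defining $\wtil X_s = x-\int_t^{s\vee t}\alpha_u^-\,du-\int_t^{s\vee t}\int_\cZ\beta_u(z)^-\,\pi(dz,du)$, which is nonincreasing, lies below $X$, and has dominated controls \emph{by construction}; it then sets $\underline X=(\wtil X)^+$ and invokes Tanaka's formula to read off the controls of the positive part, namely $\what\alpha_s=-\1_{\wtil X_s>0}\,\alpha_s^-$ and $\what\beta_s(z)=-\1_{\wtil X_{s^-}>0}\bigl(\beta_s(z)^-\wedge(\wtil X_{s^-})^+\bigr)$, which are again dominated. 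Your route (running minimum $I$ of $X$ itself, then positive part) shifts the technical burden to the decomposition of $I$ --- precisely the obstacle you flag: that $dI$ is carried by $\{X=I\}$ with density $\alpha$ there, plus your jump formula. This is sound in the present setting: in this section the paper assumes $\mu$ is finite, so $X$ has a.s.\ finitely many jumps on compacts, the decomposition reduces to elementary real analysis on inter-jump intervals, and your domination bounds follow from the $1$-Lipschitz property of $y\mapsto y^+$ together with $X_{s^-}\ge I_{s^-}$, exactly as you indicate. Comparing what each buys: the paper's sign-truncation makes the control domination trivial and outsources the pathwise regularity to Tanaka's formula, whereas your construction makes the state domination $\underline X_s\le X_s^+$ immediate and in fact yields a pointwise larger (hence no more costly) modification, since $I_s\ge\wtil X_s$ and so $(I_s)^+\ge(\wtil X_s)^+$, at the price of establishing the structure of the running minimum. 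Either way the termwise cost comparison and the identity $v^L(t,x)=\essinf_{X\in\cD(t,x)}J^L(t,X)$ follow identically.
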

\begin{proof}
For $s\ge 0$ we consider the solution of the following SDE
$$\wtil X_s = x - \int_t^{s\vee t} \al_u^- du - \int_t^{s\vee t} \int_{\cZ} \beta_s(z)^- \pi(dz,ds),$$
where $x^-$ denotes the negative part of $x$. This process is nonincreasing and satisfies $\wtil X_s \leq X_s$. Then we define
$$\underline{X}_s = \wtil X_s \vee 0 = (\wtil X_s)^+.$$
By Tanaka's formula we have
$$\underline{X}_s  = x - \int_t^{s\vee t} \1_{\wtil X_u > 0} \al_u^- du - \int_t^{s\vee t} \int_{\cZ}  \1_{\wtil X_{u^-} > 0}  (\beta_u(z)^- \wedge (\wtil X_{u^-})^+) \pi(dz,ds).$$
We define
$$\what \al_s = -\1_{\wtil X_s > 0} \al_s^-, \qquad \what \beta_s(z) = - \1_{\wtil X_{s^-} > 0}  (\beta_s(z)^- \wedge (\wtil X_{s^-})^+).$$
Then $\underline{X}$ belongs to $\cD(t,x)$. Moreover we have
$$|\what \al_s| \leq |\al_s|, \quad |\what \beta_s(z)| \leq |\beta_s(z)|, \quad 0\le \underline{X}_s \leq |X_s|$$
which implies that $J^L(t,X) \geq J^L(t,\underline{X})$.
\end{proof}

The second lemma provides the dynamics of two auxiliary processes.

\begin{Lemma}\label{Lemma:mart_property}
 Let Assumptions \emph{\textbf{(C1)}} or \emph{\textbf{(C2)}} hold and let $(Y^L,\psi^L, M^L)$ be the solution of \eqref{eq:control_trunc_bsde}. Let $X^L \in \cA(t,x)$ be the strategy from Proposition \ref{prop:optimal_control_uncons}. Then we have for $t\le s\le \tau$ that
 $$d\left(\eta_s |\alpha^L_s|^{p-1}\right)= (X^L_{s^-})^{p-1}dM^L_s- (\gamma_s\wedge L) |X^L_s|^{p-1} ds- \int_\cZ \phi_s(z)\tpi(dz,ds),$$
with $\phi_s(z)=Y^L_s |X^L_{s^-}|^{p-1} -  \lambda_s(z) |\beta^L_s(z)|^{p-1}$. Moreover, we have for $t\le s\le \tau$
\begin{eqnarray*}
d(Y^L_s (X^L_s)^p)& = & -\left[ \eta_s |\al^L_s|^p + \gamma_s^L (X^L_{s})^{p}  + \int_\cZ \lambda_s(z) |\beta^L_s(z)|^p \mu(dz) \right] ds\\
&&\quad + (X^L_{s^-})^{p} dM^L_s + (X^L_{s^-})^{p}   \int_\cZ (Y^L_{s^-} + \psi^L_s(z)) \left[ \left( 1-\zeta^L_s(z) \right)^{p} - 1 \right] \tpi(dz,ds)
\end{eqnarray*}
\end{Lemma}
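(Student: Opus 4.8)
The plan is to derive both dynamics from It\^o's product rule applied to the processes $Y^L(X^L)^{p-1}$ and $Y^L(X^L)^p$, after a preliminary simplification exploiting the conjugacy $(p-1)(q-1)=1$ (equivalently $(q-1)p=q$). Since $v(t,x)=v(t,-x)$ we may take $x\ge 0$, so that $X^L$ is nonnegative and nonincreasing, i.e. $X^L\in\cD(t,x)$, and all absolute values drop out. From $\al^L_s=-(Y^L_s/\eta_s)^{q-1}X^L_s$ and $(q-1)(p-1)=1$ one gets the pivotal identity
\begin{equation*}
\eta_s|\al^L_s|^{p-1}=\eta_s\Big(\tfrac{Y^L_s}{\eta_s}\Big)^{(q-1)(p-1)}(X^L_s)^{p-1}=Y^L_s(X^L_s)^{p-1},
\end{equation*}
so the first claim is exactly the dynamics of $Y^L(X^L)^{p-1}$; similarly $\eta_s|\al^L_s|^p=\frac{(Y^L_s)^q}{\eta_s^{q-1}}(X^L_s)^p$ and $|\beta^L_s(z)|=X^L_{s^-}\zeta^L_s(z)$, which will reproduce the remaining cost terms.

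First I would compute, by It\^o's formula for $x\mapsto x^{k}$ with $k\in\{p-1,p\}$, the semimartingale $(X^L)^k$: its continuous drift is $-k(Y^L_s/\eta_s)^{q-1}(X^L_s)^{k}ds$ and its jump at a point $z$ of $\pi$ is $(X^L_{s^-})^{k}[(1-\zeta^L_s(z))^{k}-1]$. Then I would apply the product rule $d(UV)=U_{s^-}dV+V_{s^-}dU+d[U,V]$ with $U=Y^L$, $V=(X^L)^k$, substituting the dynamics \eqref{eq:control_trunc_bsde} for $dY^L$. Because $(X^L)^k$ has no continuous martingale part, the bracket $[U,V]$ reduces to the common jumps, which occur only at the jump times of $\pi$ and equal $\int_\cZ\psi^L_s(z)(X^L_{s^-})^{k}[(1-\zeta^L_s(z))^{k}-1]\pi(dz,ds)$; the covariation of the orthogonal part $M^L$ with these $\pi$-driven jumps carries no compensator, since $M^L\in\cM^\perp$ and $\bF$ is quasi-left continuous, so $M^L$ enters the result unchanged. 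Finally I would compensate every $d\pi$-integral into a $d\tpi$-integral plus a $d\mu\,ds$ drift.

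The computation then hinges on two purely algebraic reductions. Collecting the polynomial drift, the terms in $(Y^L_s)^q/\eta_s^{q-1}$ coming from $U_{s^-}dV$ and from $V_{s^-}dU$ combine with total coefficient $-(p-1)+(p-1)=0$ when $k=p-1$ and $-p+(p-1)=-1$ when $k=p$; in the latter case the surviving term is $-\frac{(Y^L_s)^q}{\eta_s^{q-1}}(X^L_s)^p=-\eta_s|\al^L_s|^p$. For the remaining $\mu$-drift, writing $w=Y^L_{s^-}+\psi^L_s(z)$ and using $1-\zeta^L_s(z)=\lambda_s(z)^{q-1}/(w^{q-1}+\lambda_s(z)^{q-1})$, one verifies on $\{w\ge 0\}$ (both sides vanishing otherwise) the two pointwise identities
\begin{equation*}
\varpi(s,Y^L_s,\psi^L_s(z))+w\big[(1-\zeta^L_s(z))^{p-1}-1\big]=0,
\end{equation*}
\begin{equation*}
\varpi(s,Y^L_s,\psi^L_s(z))+w\big[(1-\zeta^L_s(z))^{p}-1\big]=-\lambda_s(z)(\zeta^L_s(z))^{p},
\end{equation*}
which both follow from the single relation $w(1-\zeta^L_s(z))^{p-1}=w\lambda_s(z)/(w^{q-1}+\lambda_s(z)^{q-1})^{p-1}$ together with $(q-1)(p-1)=1$ and $(q-1)p=q$. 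Integrating against $\mu$ and recalling $\Theta(s,\cdot,\cdot)=\int_\cZ\varpi\,\mu(dz)$, the first identity cancels the entire $\mu$-drift for $k=p-1$, leaving only $-(\gamma_s\wedge L)(X^L_s)^{p-1}$; the second produces $-\int_\cZ\lambda_s(z)|\beta^L_s(z)|^p\mu(dz)$ for $k=p$, which assembled with $-\eta_s|\al^L_s|^p$ and $-(\gamma_s\wedge L)(X^L_s)^p$ gives the announced running cost. Applying the same two identities to the $d\tpi$-integrands reorganizes them into the stated martingale terms (with $\phi_s(z)=Y^L_s|X^L_{s^-}|^{p-1}-\lambda_s(z)|\beta^L_s(z)|^{p-1}$ in the first case), while the term $(X^L_{s^-})^{k}dM^L_s$ is carried along from $V_{s^-}dU$.

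The step I expect to be the most delicate is the bookkeeping of the jump terms: one must compensate the $d\pi$-integrals correctly, track the covariation between the $\psi^L*\tpi$ part of $Y^L$ and the jumps of $(X^L)^k$, and---crucially---use $M^L\in\cM^\perp$ together with the quasi-left continuity of $\bF$ to rule out any additional drift arising when $M^L$ is paired with the Poisson-driven jumps. Once the conjugacy relation $(p-1)(q-1)=1$ is invoked, the algebraic identities above are routine, but it is precisely they that make the compensator drifts collapse exactly onto the cost functional $\eta_s|\al^L_s|^p+(\gamma_s\wedge L)|X^L_s|^p+\int_\cZ\lambda_s(z)|\beta^L_s(z)|^p\mu(dz)$.
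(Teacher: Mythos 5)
Your proposal is correct and follows essentially the same route as the paper's proof: the pivotal identity $\eta_s|\alpha^L_s|^{p-1}=Y^L_s(X^L_s)^{p-1}$, the integration-by-parts/product rule applied to $Y^L(X^L)^{p-1}$ and $Y^L(X^L)^p$ together with It\^o's formula for the powers of $X^L$, compensation of the $\pi$-integrals, and exactly the two algebraic identities (resting on $(p-1)(q-1)=1$ and $(q-1)p=q$) that collapse the drifts onto the cost terms $\eta_s|\alpha^L_s|^p+(\gamma_s\wedge L)|X^L_s|^p+\int_\cZ\lambda_s(z)|\beta^L_s(z)|^p\mu(dz)$. Your explicit bookkeeping of the cross-bracket with $M^L$ (via orthogonality) and of the $\psi^L$-contribution to the $\tpi$-integrands is, if anything, slightly more careful than the paper's own intermediate displays.
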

\begin{proof}
 To simplify notation we set $\gamma^L_s = \gamma_s \wedge L$. Recall that $X^L$ and $Y^L$ satisfy the following dynamics for $t\le s\le \tau$
\begin{eqnarray*}
dX^L_s & = &  -  \frac{(Y^L_s)^{q-1}}{\eta_s^{q-1}} X^L_s ds -  \int_\cZ  X^L_{s^-}  \zeta^L _s(z)  \pi(dz,ds),\\
dY^L_s & = & \left[  (p-1)  \frac{(Y^L_s)^{q}}{\eta_s^{q-1}} +  \vth (s,Y^L_s,\psi^L_s)  - \gamma^L_s  \right] ds + \int_\cZ \psi^L_s(z) \tpi(dz,ds) +dM^L_s
\end{eqnarray*}
For $t\le s\le \tau$ let
$$\theta_s = \eta_s |\alpha^L_s|^{p-1} + \int_t^s \gamma^L_u |X^L_u|^{p-1} du= Y^L_s |X^L_s|^{p-1} +  \int_t^s \gamma^L_u |X^L_u|^{p-1} du.$$
Applying the integration by parts formula to $\theta$ results in
\begin{eqnarray*}
d\theta_s & = & (X^L_{s^-})^{p-1} dY^L_s + Y^L_{s^-} d( (X^L_{s})^{p-1}) + d [ Y^L,(X^L)^{p-1} ]_s + \gamma^L_s |X^L_s|^{p-1}ds \\
 &= &  (X^L_{s^-})^{p-1} dY^L_s + Y^L_{s^-}(X^L_{s^-})^{p-1}  \left( -(p-1)  \frac{(Y^L_s)^{q-1}}{\eta_s^{q-1}} \right) ds \\
&& \quad + Y^L_{s^-}(X^L_{s^-})^{p-1} \int_\cZ \left( \left( 1-\zeta^L_s(z)  \right)^{p-1} - 1 \right) \mu(dz) ds \\
&& \quad  + Y^L_{s^-}(X^L_{s^-})^{p-1}   \int_\cZ \left( \left( 1-\zeta^L_s(z) \right)^{p-1} - 1 \right) \tpi(dz,ds) \\
&& \quad + (X^L_{s^-})^{p-1}   \int_\cZ \psi^L_s(z) \left( \left( 1-\zeta^L_s(z) \right)^{p-1} - 1 \right) \pi(dz,ds) + p\gamma^L_s |X^L_s|^{p-1}ds \\
& = & (X^L_{t^-})^{p-1}  \Theta (s,Y^L_s,\psi^L_s) ds + (X^L_{s^-})^{p-1} \int_\cZ (Y^L_{s^-} + \psi^L_s(z)) \left( \left( 1-\zeta^L_s(z)  \right)^{p-1} - 1 \right) \mu(dz) ds \\
&& \quad (X^L_{s^-})^{p-1}dM^L_s + (X^L_{s^-})^{p-1} \int_\cZ (Y^L_{s^-} + \psi^L_s(z)) \left( \left( 1-\zeta^L_s(z)  \right)^{p-1} - 1 \right) \tpi(dz,ds) \\
& = & (X^L_{s^-})^{p-1}dM^L_s + (X^L_{s^-})^{p-1} \int_\cZ (Y^L_{s^-} + \psi^L_s(z)) \left( \left( 1-\zeta^L_s(z)  \right)^{p-1} - 1 \right) \tpi(dz,ds)
\end{eqnarray*}
from the definition of $\zeta^L$ and $\Theta$ (see Equation \eqref{eq:generator}). Moreover we have
$$(Y^L_{s^-} + \psi^L_s(z))  \left[ \left( 1-\zeta^L_s(z) \right)^{p-1} - 1 \right] =     \lambda_s(z) \zeta^L_s(z)^{p-1} -(Y^L_{s^-}+\psi^L_s(z)),$$
which yields the first claim.

For the second equation we apply the integration by parts
formula to the process $Y^L (X^L)^p$ to obtain
\begin{eqnarray*}
d(Y^L_s (X^L_s)^p)& = & (X^L_{s^-})^{p} dY^L_s + Y^L_{s^-} d( (X^L_{s})^{p}) + d [ Y^L,(X^L)^{p} ]_s  \\
& = & -\left[ \eta_s (X^L_{s})^{p} \frac{(Y^L_s)^{q}}{\eta_s^{q}} + \gamma_s^L (X^L_{s})^{p} \right] ds + (X^L_{s^-})^{p} dM^L_s\\
&& \quad +(X^L_{s^-})^{p}\Theta(s,Y^L_s,\psi^L_s) ds \\
&& \quad + (X^L_{s^-})^{p} \int_\cZ (Y^L_{s^-} + \psi^L_s(z)) \left[ \left( 1-\zeta^L_s(z)  \right)^{p} - 1 \right] \mu(dz) ds\\
&& \quad + (X^L_{s^-})^{p}   \int_\cZ (Y^L_{s^-} + \psi^L_s(z)) \left[ \left( 1-\zeta^L_s(z) \right)^{p} - 1 \right] \tpi(dz,ds).
\end{eqnarray*}
But note that
$$|\al^L_s|^p =  \left|  \frac{(Y^L_s)^{q-1}}{\eta_s^{q-1}} X^L_s \right|^p =\frac{(Y^L_s)^{q}}{\eta_s^{q}} (X^L_s)^p,$$
and from the very definition \eqref{eq:generator} of $\Theta$
\begin{eqnarray*}
&& \Theta(s,Y^L_s,\psi^L_s) + \int_\cZ (Y^L_{s} + \psi^L_s(z)) \left[ \left( 1-\zeta^L_s(z) \right)^{p} - 1 \right] \mu(dz) \\
&& = \int_\cZ (Y^L_{s} + \psi^L_s(z)) \left[ \left(  \frac{\lambda_s(z)^{q-1}}{ \left[ (Y^L_{s^-} + \psi^L_s(z))^{q-1} + \lambda_s(z)^{q-1}\right]} \right)^{p} \right. \\
&& \qquad  \qquad  \qquad  \qquad  \qquad \qquad  \left.- \frac{\lambda_s(z)}{\left(|Y^L_{s} +  \psi^L_s(z)|^{q-1}+ \lambda_s(z)^{q-1} \right)^{p-1}} \right] \mu(dz) \\
&& =- \int_\cZ (Y^L_{s} + \psi^L_s(z)) \frac{\lambda_s(z)}{ \left[ (Y^L_{s^-} + \psi^L_s(z))^{q-1} + \lambda_s(z)^{q-1}\right]^p} \left[ \left(Y^L_{s} +  \psi^L_s(z)\right)^{q-1}  \right] \mu(dz)\\
&&=- \int_\cZ\lambda_s(z)|\zeta_s(z)|^p \mu(dz).
\end{eqnarray*}
\end{proof}

We close this section with the proof of Proposition \ref{prop:optimal_control_uncons}.

\noindent \begin{proof}[Proof of Proposition \ref{prop:optimal_control_uncons}]
We omit the superscript $L$ in the sequel. Let $(t,x)\in \R_+\times \R_+$. Take another process $\bX$ in $\cD(t,x)$. Use the convexity of the function $y \mapsto |y|^p$ and $\alpha_s \leq 0$ to obtain
\begin{eqnarray}\label{eq:mart_prop_1}\nonumber
 && \int_{t\wedge \tau}^\tau \left( \eta_s (|\alpha_s|^p -|\ba_s|^p) \right) ds\le -p\int_{t\wedge \tau}^\tau \eta_s|\alpha_s|^{p-1} \left(\alpha_s - \ba_s \right) ds \\ \nonumber
&& \quad = -p\int_{t\wedge \tau}^\tau \eta_s|\alpha_s|^{p-1}(dX_s  - d\bX_s)+p\int_{t\wedge \tau}^\tau \int_\cZ \eta_s|\alpha_s|^{p-1} \left(\beta_s(z) - \bb_s(z) \right) \pi(dz,ds) \\
&& \quad = \mathcal{I}^1_t + \mathcal{I}^2_t
\end{eqnarray}
By integration by parts on the first integral and using Lemma \ref{Lemma:mart_property} and boundedness of $X$ and $\bX$ (see Lemma \ref{lem:decreasing_control}), we obtain
\begin{eqnarray*}
 \E^{\F_t}\mathcal{I}^1_t  & = & -p\E^{\F_t}\left[\eta_\tau|\alpha_\tau|^{p-1}(X_\tau  -\bX_\tau)\right] +p \E^{\F_t}\left[\int_{t\wedge \tau}^\tau(X_s  - \bX_s)d\left(\eta_s|\alpha_s|^{p-1}\right)\right]\\
&&\quad -p \E^{\F_t}\left[\int_{t\wedge \tau}^\tau\int_\cZ  \left(\beta_s(z) - \bb_s(z) \right) \phi_s(z) \pi(dz,ds)\right] \\
&=&-p\E^{\F_t}\left[Y_\tau X_\tau ^{p-1}(X_\tau   -\bX_\tau )\right]-p\E^{\F_t}\left[\int_{t\wedge \tau}^\tau (\gamma_s\wedge L) |X^L_s|^{p-1}(X_s -\bX_s) ds\right] \\
&& \quad -p \E^{\F_t}\left[\int_{t\wedge \tau}^\tau \int_\cZ  \left(\beta_s(z) - \bb_s(z) \right) \phi_s(z) \mu(dz)ds\right]
\end{eqnarray*}
where $\phi$ is defined as in Lemma \ref{Lemma:mart_property}. Using again convexity of $y \mapsto |y|^p$ yields
\begin{eqnarray}\label{eq:mart_prop_2}\nonumber
\E^{\F_t}\mathcal{I}^1_t&\le& -\E^{\F_t}\left[(\xi\wedge L)(X_\tau ^p  -\bX_\tau ^p))\right]-\E^{\F_t}\left[\int_{t\wedge \tau}^\tau (\gamma_s\wedge L) (X_s^p  -\bX_s^p) ds\right]\\
&& \quad -p \E^{\F_t}\left[\int_{t\wedge \tau}^\tau \int_\cZ  \left(\beta_s(z) - \bb_s(z) \right) \phi_s(z) \mu(dz)ds\right].
\end{eqnarray}
Moreover we have
\begin{eqnarray}\label{eq:mart_prop_3}
 \E^{\F_t} \mathcal{I}^2_t  &= &  p\E^{\F_t} \int_{t\wedge \tau}^\tau  \int_\cZ \eta_s|\alpha_s|^{p-1}\left(\beta_s(z) - \bb_s(z) \right) \mu( dz)ds
\end{eqnarray}
Now, using \eqref{eq:mart_prop_1}, \eqref{eq:mart_prop_2} and \eqref{eq:mart_prop_3} we obtain
\begin{eqnarray*}
J(t,X) - J(t,\bX) &\le& \E^{\F_t} \left[\int_{t\wedge \tau}^\tau   \int_\cZ p\left(\beta_s(z) - \bb_s(z) \right)\left(\phi_s(z)-\eta_s|\alpha_s|^{p-1}\right)\mu(dz) ds \right]\\
&&\quad+\E^{\F_t} \left[\int_{t\wedge \tau}^\tau   \int_\cZ\lambda_s(z) \left( |\beta_s(z)|^p- |\bb_s(z)|^p \right) \mu(dz)ds\right].
\end{eqnarray*}
Now recall that $\eta_s|\alpha_s|^{p-1} = Y^L_s |X^L_{s}|^{p-1}$. From the definition of $\phi_s$ and from convexity of $x \mapsto |x|^p$ we obtain:
\begin{eqnarray*}
J(t,X) - J(t,\bX) &\le& \E^{\F_t} \left[\int_{t\wedge \tau}^\tau   \int_\cZ pY^L_s\left(\beta_s(z) - \bb_s(z) \right)\left( |X^L_{s^-}|^{p-1} - |X^L_{s}|^{p-1} \right)\mu(dz) ds \right]
\end{eqnarray*}
and therefore $J(t,X) - J(t,\bX)\le 0$.

It remains to verify the identity $v^L(t,x) = Y^L_t |x|^p$.
But from Lemma \ref{Lemma:mart_property} we deduce that
\begin{eqnarray*}
Y^L_t |x|^p &=& \E^{\F_t} \int_{t\wedge \tau}^\tau  \left[ \eta_u |\al^L_u|^p + \gamma_u^L (X^L_{u})^{p} + \int_\cZ \lambda_u(z) |\beta^L_u(z)|^p \mu(dz) \right] du + \E^{\F_t} (Y^L_\tau  |X^L_\tau |^p)\\
&=&J(t,X)=v^L(t,x).
\end{eqnarray*}
\end{proof}

\subsection{Solving the constrained problem}
This section is devoted to the proof of Theorem \ref{thm:main_thm_3}. For the convenience of the reader we restate the result here.
\begin{Thm}\label{thm:verif_result}
Let Assumptions \emph{\textbf{(C1)}} or \emph{\textbf{(C2)}} hold and let $(Y,\psi,M)$ be the minimal solution to \eqref{eq:bsde_contr_prob} with singular terminal condition $Y_\tau =\xi$ from Corollary \ref{coro:exist_sing_control} and let $Y_s=\xi$ for all $s\ge \tau$. Then $v(t,x) = Y_t |x|^p $ for all $(t,x)\in \R_+\times \R$. Moreover the control given by Equation \eqref{eq:optim_contr_sing}
\begin{equation*}
X^*_s = x\exp \left[ - \int_t^{s\vee t} \left(  \frac{Y_u}{\eta_u} \right)^{q-1} du \right] \exp \left[ \int_t^{s\vee t} \int_\cZ  \ln \left( 1 - \zeta_u(z)\right) \pi(dz,du) \right]
\end{equation*}
with
$$ \zeta_t(z) = \frac{(Y_{t^-} + \psi_t(z))^{q-1}}{ \left[ (Y_{t^-} + \psi_t(z))^{q-1} + \lambda_t(z)^{q-1}\right]}$$
belongs to $\cA(t,x)$, satisfies the terminal state contraint \eqref{eq:terminal_cons} if $t<\tau$ and is optimal in \eqref{eq:value_fct}.
\end{Thm}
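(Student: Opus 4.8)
The plan is to derive the two inequalities $v(t,x)\ge Y_t|x|^p$ and $J(t,X^*)\le Y_t|x|^p$ separately and then close the loop with the trivial bound $J(t,X^*)\ge v(t,x)$. Throughout I restrict to $x\ge 0$ (the general case follows from $v(t,x)=v(t,-x)$) and to $t<\tau$ (on $\{t\ge\tau\}$ the identity is just the terminal condition $Y_\tau=\xi$). The backbone of the argument is the passage to the limit $L\to\infty$ in the penalized problem, for which Proposition \ref{prop:optimal_control_uncons} supplies the clean identification $v^L(t,x)=Y^L_t|x|^p$, together with $Y^L_t\nearrow Y_t$ from Corollary \ref{coro:exist_sing_control}.

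For the lower bound I would use $\gamma_s\wedge L\le\gamma_s$ and $\xi\wedge L\le\xi$, so that $J^L(t,X)\le J(t,X)$ for every $X\in\cA(t,x)$. Hence $Y^L_t|x|^p=v^L(t,x)\le J^L(t,X)\le J(t,X)$, and letting $L\to\infty$ by monotone convergence gives $Y_t|x|^p\le J(t,X)$ for all admissible $X$; taking the essential infimum yields $Y_t|x|^p\le v(t,x)$.

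For the reverse inequality I would test the candidate $X^*$ from \eqref{eq:optim_contr_sing}. The integration-by-parts computation of Lemma \ref{Lemma:mart_property} applies verbatim to the limiting solution $(Y,\psi,M)$ and the control $X^*$ on each stochastic interval $[t\wedge\tau_\eps,\tau_\eps]$, with $\tau_\eps$ as in \eqref{eq:def_tau_eps} (in the deterministic case I would take $\tau_\eps=T-\eps$), because the BSDE \eqref{eq:bsde_contr_prob} holds strictly before $\tau$. Taking $\E^{\F_t}$ annihilates the two martingale integrals (they are true martingales up to $\tau_\eps$ thanks to the uniform bound on $Y$ before $\tau$ from Propositions \ref{prop:upper_bound_Y_L}/\ref{prop:Keller-Osserman} and the boundedness of $X^*$), leaving
$$Y_t|x|^p=\E^{\F_t}\left[\int_{t\wedge\tau_\eps}^{\tau_\eps}\left(\eta_s|\al^*_s|^p+\gamma_s|X^*_s|^p+\int_\cZ\lambda_s(z)|\beta^*_s(z)|^p\mu(dz)\right)ds+Y_{\tau_\eps}(X^*_{\tau_\eps})^p\right].$$
Letting $\eps\downarrow 0$, the running-cost term increases to $\E^{\F_t}\int_{t\wedge\tau}^\tau(\cdots)ds$ by monotone convergence, and Fatou's lemma controls the terminal term via the boundary estimate $\liminf_{\eps\to0}Y_{\tau_\eps}(X^*_{\tau_\eps})^p\ge\xi(X^*_\tau)^p$: since $\tau$ is deterministic or the exit time of a continuous diffusion, it is a.s.\ not a jump time of $\pi$, so $X^*_{\tau_\eps}\to X^*_\tau$, and combining this with $\liminf_{s\to\tau}Y_s\ge\xi$ and the convention $0\cdot\infty=0$ gives the bound. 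This yields $Y_t|x|^p\ge J(t,X^*)$. Since $Y_t<\infty$ for $t<\tau$, the cost is finite, which forces $X^*_\tau=0$ on $\cS=\{\xi=\infty\}$, i.e.\ the constraint \eqref{eq:terminal_cons}, while finiteness of the running cost gives $\al^*\in L^1(t,\infty)$ and $\beta^*\in G_{loc}(\pi)$, so $X^*\in\cA(t,x)$. Together with the lower bound and $J(t,X^*)\ge v(t,x)$ this forces $v(t,x)=Y_t|x|^p=J(t,X^*)$ and optimality of $X^*$.

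I expect the main obstacle to be precisely the terminal term: justifying that the stochastic integrals are genuine martingales up to $\tau_\eps$ and, above all, controlling $Y_{\tau_\eps}(X^*_{\tau_\eps})^p$ as $\eps\to0$, since $Y_{\tau_\eps}$ explodes on $\cS$ exactly where $X^*_{\tau_\eps}$ must vanish. The a priori bounds on $Y$ away from $\tau$ and the explicit product structure \eqref{eq:optim_contr_sing} of $X^*$, whose first exponential factor decays to $0$ on $\cS$ because $\int_t^\tau(Y_u/\eta_u)^{q-1}du=\infty$ there, are what make this delicate balance come out in our favor.
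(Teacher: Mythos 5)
Your proposal is correct, and its skeleton is the paper's: Proposition \ref{prop:optimal_control_uncons} plus monotone convergence of $Y^L_t\nearrow Y_t$ gives $Y_t|x|^p\le v(t,x)$, and Lemma \ref{Lemma:mart_property} applied to $(Y,\psi,M)$ and $X^*$ up to $\tau_\eps$, followed by monotone convergence and conditional Fatou, gives $Y_t|x|^p\ge J(t,X^*)$, after which the sandwich $Y_t|x|^p\ge J(t,X^*)\ge v(t,x)\ge Y_t|x|^p$ closes the argument. The one genuine divergence is the order in which the terminal constraint \eqref{eq:terminal_cons} is obtained. The paper proves it \emph{before} the cost estimate: the process $\theta_s=Y_s|X^*_s|^{p-1}-Y_{t\wedge\tau_\eps}|X^*_{t\wedge\tau_\eps}|^{p-1}+\int_{t\wedge\tau_\eps}^s\gamma_u|X^*_u|^{p-1}du$ is a nonnegative local martingale on $[\![ t\wedge\tau_\eps,\tau[\![$, hence a supermartingale converging a.s.\ as $s\to\tau$; writing $|X^*_s|^{p-1}$ as a convergent quantity divided by $Y_s$ and using $Y_s\to\infty$ on $\cS$ yields the pathwise decay $X^*_s\to0$ on $\{t<\tau\}\cap\cS$. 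The paper then inserts $\1_{\{\xi<\infty\}}$ before Fatou and uses the already-established constraint to recover $\E^{\F_t}\left[\xi|X^*_\tau|^p\right]$. You instead apply Fatou directly in $[0,\infty]$ and read the constraint off a posteriori from $J(t,X^*)\le Y_t|x|^p<\infty$. This works, but only because of a case distinction your write-up leaves implicit and that you should spell out: on $\{\xi<\infty\}$ the liminf bound follows from $\liminf_\eps Y_{\tau_\eps}\ge\xi$ and $X^*_{\tau_\eps}\to X^*_\tau$ (no jump of $\pi$ at the deterministic, resp.\ predictable, time $\tau$), whereas on $\cS$ the inequality $\liminf_\eps Y_{\tau_\eps}|X^*_{\tau_\eps}|^p\ge\xi|X^*_\tau|^p$ holds trivially because either $X^*_\tau=0$ (both sides vanish under the convention $0\cdot\infty=0$) or $X^*_\tau>0$ (both sides are $+\infty$); the bad event $\cS\cap\{X^*_\tau>0\}$ is then killed not by Fatou itself but by the finiteness of $Y_t$. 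What your route buys is a shorter proof that dispenses entirely with the supermartingale-convergence machinery; what it gives up is the pathwise statement $\lim_{s\to\tau}X^*_s=0$ on $\cS$ (the theorem only claims $X^*_\tau\1_\cS=0$ a.s., so this is enough). Two small repairs: under \textbf{(C1)} the a priori bound on $Y$ before $T$ is a conditional-expectation bound, not a uniform one, so the martingale property of $\int(X^*_{s^-})^p\,dM_s$ and of the compensated jump integral should be justified from $(Y,\psi,M)\in\bS^\ell\times L^\ell_\pi\times\cM^\ell$ on $[0,\tau_\eps]$ together with $|X^*|\le x$ and $\mu(\cZ)<+\infty$, exactly as the paper implicitly does; and your closing intuition that $\int_t^\tau(Y_u/\eta_u)^{q-1}du=\infty$ on $\cS$ is neither proved nor needed anywhere, so it should be dropped from the formal argument.
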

\begin{proof}
Let $(t,x)\in \R_+\times \R_+$. If $\tau=T$ is deterministic, we set $\tau_\eps=T-\eps$ for $\eps>0$. In the case where $\tau=\tau_D$ is given by \eqref{eq:def_stop_time}, the stopping time $\tau_\eps$ is defined as in \eqref{eq:def_tau_eps}.

Observe that $Y$ and $Y^L$ satisfy the same dynamics before time $\tau_\eps $. Hence, the results from Lemma \ref{Lemma:mart_property} remain to hold true if $Y^L$ and $X^L$ are replaced by $Y$ and $X^*$. In particular, it follows that the process
$$\theta_s=Y_{s} |X_{s}^*|^{p-1} - Y_{t\wedge \tau_\eps} |X_{t\wedge \tau_\eps}^*|^{p-1}+ \int_{t\wedge \tau_\eps}^{s} \gamma_u |X^*_u|^{p-1} du, \quad s\ge t\wedge \tau_\eps, \ \eps > 0,$$
is a nonnegative local martingale on the stochastic interval $[\![t \wedge \tau_\eps,\tau[\![$ for any $\eps > 0$. Consequently it is a nonnegative supermartingale and thus converges almost surely in $\R$ as $s$ goes to $\tau$ (see Chapter V.3 in \cite{jaco:79} or Appendix A in \cite{carr:fish:ruf:14}). Hence 
$$0 \leq X^*_{s} = \left( \frac{\theta_{s} - p \int_{t\wedge \tau_\eps}^{s} \gamma_u |X^*_u|^{p-1} du}{pY_{s\wedge \tau}} \right)^{q-1} \leq \left( \frac{\theta_{s}}{pY_{s}} \right)^{q-1}.$$
Since $Y$ satisfies the terminal condition $\liminf_{s\nearrow \tau} Y_{s} \1_{\cS} = \infty$ we have a.s. on the set $\{t <\tau\}\cap \cS$:
$$0 \leq X^*_{s}  \leq \left( \frac{\theta_{s}}{pY_{s}} \right)^{q-1} \to 0$$
when $s$ goes $\tau$ . It follows that $X$ satisfies \eqref{eq:terminal_cons} if $t<\tau$.

Appealing once more to Lemma \ref{Lemma:mart_property} we observe that for $t\leq s < \tau $
\begin{eqnarray*}
d(Y_s (X^*_s)^p)& = & -\left[ \eta_s |\al^*_s|^p + \gamma_s (X^*_{s})^{p} \right] ds - \int_\cZ \lambda_s(z) |\beta^*_s(z)|^p \mu(dz) ds\\
&& \quad + (X^*_{s^-})^{p} dM_s + (X^*_{s^-})^{p}   \int_\cZ (Y_{s^-} + \psi_t(z)) \left[ \left( 1-\zeta_s(z) \right)^{p} - 1 \right] \tpi(dz,ds)
\end{eqnarray*}
Since $|X^*_t| \leq x$ we deduce for all $\eps>0$
\begin{eqnarray*}
Y_t |x|^p &= & \1_{\{t<\tau\}}\E^{\F_t} \left[ \int_t^{\tau_\eps \vee t} \left\{\eta_u |\al^*_u|^p + \gamma_u (X^*_{u})^{p} + \int_\cZ \lambda_u(z) |\beta^*_u(z)|^p \mu(dz) \right\} du + Y_{\tau_\eps \vee t} |X_{\tau_\eps \vee t}|^p\right] \\
&&\qquad +\1_{\{t\ge\tau\}}\xi |x|^p\\
& \geq &   \1_{\{t<\tau\}}\E^{\F_t} \left[ \int_t^{\tau_\eps \vee t} \left\{\eta_u |\al^*_u|^p + \gamma_u (X^*_{u})^{p} + \int_\cZ \lambda_u(z) |\beta^*_u(z)|^p \mu(dz) \right\} du + \1_{\{\xi<\infty\}}Y_{\tau_\eps \vee t} |X_{\tau_\eps \vee t}|^p\right]\\
&&\qquad +\1_{\{t\ge\tau\}}J(t,X^*)
\end{eqnarray*}
Appealing to monotone convergence theorem yields
\begin{eqnarray*}
\lim_{\eps\to 0}\1_{\{t<\tau\}}\E^{\F_t} \left[ \int_t^{\tau_\eps \vee t} \left\{\eta_u |\al^*_u|^p + \gamma_u (X^*_{u})^{p} + \int_\cZ \lambda_u(z) |\beta^*_u(z)|^p \mu(dz) \right\} du\right]\\
=\1_{\{t<\tau\}}\E^{\F_t} \left[ \int_t^{\tau} \left\{\eta_u |\al^*_u|^p + \gamma_u (X^*_{u})^{p} + \int_\cZ \lambda_u(z) |\beta^*_u(z)|^p \mu(dz) \right\} du\right]
\end{eqnarray*}
Since we have $\liminf_{\eps\to 0}Y_{\tau_\eps}\ge \xi$ and by Fatou's lemma, we obtain\footnote{Recall that $0\cdot \infty:= 0$}
\begin{eqnarray*}
 \liminf_{\eps\to 0}\1_{\{t<\tau\}}\E^{\F_t}\left[\1_{\{\xi<\infty\}}Y_{\tau_\eps \vee t} |X_{\tau_\eps \vee t}|^p\right] &\ge & \1_{\{t<\tau\}}\E^{\F_t}\left[\liminf_{\eps\to 0}\1_{\{\xi<\infty\}}Y_{\tau_\eps \vee t} |X_{\tau_\eps \vee t}|^p\right]\\
 &\ge & \1_{\{t<\tau\}}\E^{\F_t}\left[\1_{\{\xi<\infty\}}\xi |X_{\tau}|^p\right]\\
  &= & \1_{\{t<\tau\}}\E^{\F_t}\left[\xi |X_{\tau}|^p\right]
 \end{eqnarray*}
Alltogether we obtain that $Y_t |x|^p \geq J(t,X^*).$
Next, note that for every $X \in \cA(t,x)$ we have $J(t,X) \geq J^L(t,X)$. This implies $v(t,x) \geq v^L(t,x)$ for every $L > 0$. By Proposition \ref{prop:optimal_control_uncons} we have $Y^L_t|x|^p = v^L(t,x)$. Minimality of $Y$ implies
$$Y_t|x|^p = \lim_{L\nearrow \infty} Y^L_t|x|^p =  \lim_{L\nearrow \infty} v^L(t,x) \leq v(t,x).$$
Consequently we obtain
$$Y_t|x|^p \geq J(t,X^*) \geq v(t,x) \geq Y_t|x|^p$$
 and thus optimality of $X^*$.
\end{proof}

\section*{Appendix}

\subsection*{Some details concerning the proof of Proposition \ref{thm:exists_sing_sol}}

In this section we give the details for the proof of Proposition \ref{thm:exists_sing_sol}. The constant $\ell$ is defined in Condition \textbf{A6}. Let us begin with two results contained in \cite{krus:popi:14}. For $\zeta \in L^\ell(\Omega)$, let $(Y,\psi,M) \in \bS^\ell(0,T) \times L^\ell_\pi(0,T) \times \cM^\ell(0,T)$ be the classical solution of the BSDE:
$$Y_t = \zeta+ \int_t^T g(u,Y_u,\psi_u) du - \int_t^T \int_\cZ \psi_{u^-}(z) \tpi(dz,du) - \int_t^T dM_u$$
where the generator $g$ satisfies Conditions \textbf{A1}, \textbf{A2} and \textbf{A3} and $g^0_t = g(t,0,0)$ is in $\bH^\ell(0,T)$. Again the existence and the uniqueness of $(Y,\psi,M)$ comes from Theorem 2 in \cite{krus:popi:14}. Recall that $\nu(x) = |x|^{-1} x \1_{x\neq 0}$. The first result is the It\^o formula.
\begin{Lemma}[Corollary 1 and Remark 1 in \cite{krus:popi:14}] \label{lem:appendix_1}
Let $c(\ell) =\frac{\ell((\ell-1)\wedge 1)}{2}$ and $0\leq s \leq t \leq T$, then it holds that
\begin{eqnarray} \nonumber
&& |Y_s|^\ell \leq |Y_t|^\ell + \ell \int_s^t |Y_u|^{\ell-1} \nu(Y_u) g(u,Y_u,\psi_u) du  -c(\ell) \int_s^t  |Y_{u}|^{\ell-2}  \1_{Y_u\neq 0} d[ M ]^c_u  \\ \nonumber
& &\quad  -  \ell  \int_s^t |Y_{u^-}|^{\ell-1} \nu(Y_{u^-}) dM_u -  \ell \int_s^t |Y_{u^-}|^{\ell-1} \nu(Y_{u^-})  \int_\cZ \psi_s(z) \tpi(dz,du)   \\ \nonumber
& &\quad -  \int_s^t  \int_\cZ \left[ |Y_{u^-}+\psi_u(z)|^\ell -|Y_{u^-}|^\ell - \ell|Y_{u^-}|^{\ell-1} \nu(Y_{u^-})  \psi_u(z) \right] \pi(dz,du) \\  \nonumber
&& \quad - \sum_{s < u \leq t}  \left[ |Y_{u^-}+\Delta M_u|^\ell - |Y_{u^-}|^\ell - \ell|Y_{u^-}|^{\ell-1} \nu(Y_{u^-})  \Delta M_u \right] . 
\end{eqnarray}
Moreover 
$\int_0^t \1_{Y_u=0}d[M]^c_u= 0$.
\end{Lemma}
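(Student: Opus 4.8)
The plan is to obtain the formula from the classical Itô formula for semimartingales with jumps applied to the convex function $\phi(x)=|x|^\ell$, distinguishing the two cases $\ell\ge 2$ and $1<\ell<2$; the difficulty is concentrated entirely in the latter, where $\phi$ fails to be $C^2$ at the origin. Throughout I would use that $Y$ is real-valued, that its continuous martingale part coincides with $M^c$ (the drift is of finite variation, while the $\tpi$-integral and the purely discontinuous part of $M$ contribute nothing to the continuous bracket), so that $[Y]^c=[M]^c$, and that by orthogonality together with quasi-left-continuity the jumps coming from $\tpi$ and those of $M$ never occur simultaneously.

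First, for $\ell\ge 2$ the function $\phi$ is $C^2$ with $\phi'(x)=\ell|x|^{\ell-1}\nu(x)$ and $\phi''(x)=\ell(\ell-1)|x|^{\ell-2}$. I would apply the classical Itô formula directly, insert $dY_u=-g(u,Y_u,\psi_u)\,du+\int_\cZ\psi_u(z)\tpi(dz,du)+dM_u$, and split the jump-correction sum into a Poisson part (written as an integral against $\pi$) and an $M$-part (written as the explicit sum over $\Delta M$). This yields the identity with the exact constant $\tfrac{\ell(\ell-1)}{2}$ in front of $\int_s^t|Y_u|^{\ell-2}\mathbf{1}_{Y_u\neq0}\,d[M]^c_u$; since $\ell-1\ge1$ and this integral is nonnegative, replacing $\tfrac{\ell(\ell-1)}{2}$ by the smaller $c(\ell)=\tfrac{\ell}{2}$ turns the identity into the asserted inequality while leaving all other terms unchanged.

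The core of the argument is the case $1<\ell<2$. Here I would regularise by $\phi_\varepsilon(x)=(\varepsilon^2+x^2)^{\ell/2}\in C^2$, for which $\phi_\varepsilon''(x)=\ell(\varepsilon^2+x^2)^{\ell/2-2}\bigl[\varepsilon^2+(\ell-1)x^2\bigr]\ge \ell(\ell-1)(\varepsilon^2+x^2)^{\ell/2-1}\ge0$ and $|\phi_\varepsilon'(x)|\le\ell|x|^{\ell-1}$. Applying Itô to $\phi_\varepsilon(Y)$ and letting $\varepsilon\downarrow0$, the finite-variation and the two martingale terms converge (by dominated convergence for ordinary and stochastic integrals, using $Y\in\bS^\ell$, $\psi\in L^\ell_\pi$, $M\in\cM^\ell$ and condition \textbf{A3}), and the jump-correction terms converge to their $|\cdot|^\ell$ counterparts, all bracketed expressions being nonnegative by convexity. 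The delicate term is the continuous-bracket one: from the lower bound on $\phi_\varepsilon''$,
\[-\tfrac12\int_s^t\phi_\varepsilon''(Y_u)\,d[M]^c_u\le-\tfrac{\ell(\ell-1)}{2}\int_s^t(\varepsilon^2+Y_u^2)^{\ell/2-1}\,d[M]^c_u,\]
and since $(\varepsilon^2+Y_u^2)^{\ell/2-1}\to|Y_u|^{\ell-2}$ on $\{Y_u\neq0\}$, Fatou's lemma (applied after using the ``moreover'' statement to discard $\{Y_u=0\}$) produces the upper bound $-c(\ell)\int_s^t|Y_u|^{\ell-2}\mathbf{1}_{Y_u\neq0}\,d[M]^c_u$ with $c(\ell)=\tfrac{\ell(\ell-1)}{2}$. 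This is precisely where the equality degrades into the inequality.

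Finally, the ``moreover'' claim $\int_0^t\mathbf{1}_{Y_u=0}\,d[M]^c_u=0$ follows from the occupation-times formula for the continuous local-martingale part: writing $\int_0^t h(Y_u)\,d[Y]^c_u=\int_\R h(a)L^a_t\,da$ for the local times $L^a$ of $Y$ and taking $h=\mathbf{1}_{\{0\}}$ gives $0$ because $\{0\}$ is Lebesgue-null, and $[Y]^c=[M]^c$. The main obstacle is thus the regime $1<\ell<2$: keeping the continuous-bracket term under control in the limit (which forces both the inequality and the constant $c(\ell)$ and relies essentially on the ``moreover'' statement), while simultaneously verifying that all the merely $L^\ell$-integrable martingale and jump terms pass to the limit under the available integrability.
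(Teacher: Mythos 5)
The paper does not prove this lemma at all: it is quoted verbatim (as Corollary~1 and Remark~1) from \cite{krus:popi:14}, so there is no internal proof to compare against. Your reconstruction --- direct It\^o for $\ell\ge 2$ with the constant weakened from $\tfrac{\ell(\ell-1)}{2}$ to $c(\ell)$, regularization by $(\eps^2+x^2)^{\ell/2}$ with Fatou/monotone convergence handling the continuous-bracket and the nonnegative jump-correction terms when $1<\ell<2$, and the occupation-times formula for $\int_0^t \mathbf{1}_{Y_u=0}\,d[M]^c_u=0$ --- is correct and is essentially the standard argument used in that cited reference (in the spirit of Briand--Delyon--Hu--Pardoux--Stoica), so your approach coincides with the source proof.
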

The second result is the following.
\begin{Lemma}[Lemma 9 in \cite{krus:popi:14}]\label{lem:appendix_2}
If $\ell < 2$, the non-decreasing processes involving the jumps of $Y$ control the quadratic variations:
\begin{eqnarray*}
&&\sum_{0< u \leq t}  \left[ |Y_{u^-}+\Delta M_u|^\ell - |Y_{u^-}|^\ell - \ell|Y_{u^-}|^{\ell-1} \nu(Y_{u^-}) \Delta M_u \right]  \\
&& \quad \geq c(\ell)  \sum_{0< u \leq t}|\Delta M_u|^2  \left( |Y_{u^-}|^2 \vee  |Y_{u^-} + \Delta M_u|^2 \right)^{\ell/2-1} \1_{|Y_{u^-}|\vee |Y_{u^-} + \Delta M_u| \neq 0}
\end{eqnarray*}
and
 \begin{eqnarray*}
&& \int_{0}^{t}  \int_\cZ \left[ |Y_{u^-}+\psi_u(z)|^\ell -|Y_{u^-}|^\ell - \ell|Y_{u^-}|^{\ell-1}  \nu(Y_{u^-}) \psi_u(z) \right] \pi(dz,du) \\
&& \quad \geq c(\ell) \int_{0}^{t}  \int_\cZ |\psi_u(z)|^2  \left( |Y_{u^-}|^2 \vee  |Y_{u^-} +\psi_u(z)|^2 \right)^{\ell/2-1} \1_{|Y_{u^-}|\vee |Y_{u^-} + \psi_u(z)| \neq 0} \pi(dz,du).
\end{eqnarray*}
\end{Lemma}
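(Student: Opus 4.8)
The plan is to reduce both estimates to a single deterministic pointwise inequality for $\Phi(x)=|x|^\ell$ and then sum/integrate it pathwise. Concretely, I would prove that for $1<\ell<2$ and all $a,b\in\R$ with $|a|\vee|a+b|\neq0$,
\[
\Phi(a+b)-\Phi(a)-\ell|a|^{\ell-1}\nu(a)\,b \ \geq\ c(\ell)\,b^2\left(|a|^2\vee(a+b)^2\right)^{\ell/2-1},
\]
where $c(\ell)=\ell(\ell-1)/2$ (note $(\ell-1)\wedge1=\ell-1$ in this range, and $\ell|a|^{\ell-1}\nu(a)=\Phi'(a)$). Granting this, the first assertion follows by applying it with $a=Y_{u^-}$, $b=\Delta M_u$ and summing the terms over the jump times $0<u\le t$; the second follows with $a=Y_{u^-}$, $b=\psi_u(z)$ by integrating against the nonnegative measure $\pi(dz,du)$. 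Both the left-hand remainders (by convexity of $\Phi$) and the right-hand terms are nonnegative, so the termwise inequality passes to the sum and to the integral, and the indicator in the statement precisely excludes the degenerate point $a=a+b=0$ where both sides vanish.

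For the pointwise inequality I would split according to whether the segment $[a,a+b]$ meets $0$. If $a$ and $a+b$ have the same sign (or one is zero), $\Phi$ is $C^2$ along the segment with $\Phi''(x)=\ell(\ell-1)|x|^{\ell-2}$, and Taylor's formula with integral remainder gives
\[
\Phi(a+b)-\Phi(a)-\Phi'(a)b=\ell(\ell-1)\,b^2\int_0^1(1-t)|a+tb|^{\ell-2}\,dt.
\]
Since the segment does not cross $0$, one has $|a+tb|\le M:=|a|\vee|a+b|$ for all $t$, and because $\ell-2<0$ this yields $|a+tb|^{\ell-2}\ge M^{\ell-2}$; using $\int_0^1(1-t)\,dt=1/2$ reproduces exactly the constant $c(\ell)$. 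The singularity of $\Phi''$ at a zero endpoint is harmless, as the remainder integral still converges ($\ell-1>0$).

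In the crossing case $a(a+b)<0$ I would invoke the even symmetry of $\Phi$ to assume $a>0>a+b$ and set $u=a$, $w=-(a+b)$. Splitting the remainder at $0$ and using $\Phi(0)=\Phi'(0)=0$ gives the exact identity
\[
\Phi(a+b)-\Phi(a)-\Phi'(a)b=w^\ell+(\ell-1)u^\ell+\ell u^{\ell-1}w,
\]
whose three summands are all nonnegative. Bounding each factor $w^{\ell-2},u^{\ell-2}\ge M^{\ell-2}$ (with $M=\max(u,w)$, again since $\ell-2<0$) gives the lower bound $M^{\ell-2}\bigl(w^2+(\ell-1)u^2+\ell uw\bigr)$, so it remains to verify the elementary quadratic estimate $w^2+(\ell-1)u^2+\ell uw\ge\frac{\ell(\ell-1)}{2}(u+w)^2$. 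After rearrangement this is a form in $u,w\ge0$ whose coefficients $1-\frac{\ell(\ell-1)}{2}=\frac{(2-\ell)(\ell+1)}{2}$, $(\ell-1)-\frac{\ell(\ell-1)}{2}=(\ell-1)\frac{2-\ell}{2}$ and $\ell-\ell(\ell-1)=\ell(2-\ell)$ are each nonnegative precisely because $1<\ell<2$, hence it is a sum of nonnegative monomials.

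The only genuinely delicate point is this crossing case: there the first-order term $-\Phi'(a)b$ is what rescues the bound, and one must retain the full expression $w^\ell+(\ell-1)u^\ell+\ell u^{\ell-1}w$ rather than discard any summand, since dropping $w^\ell$ is too lossy near $u=w$. Everything else is a routine use of Taylor's formula together with the monotonicity of $x\mapsto|x|^{\ell-2}$.
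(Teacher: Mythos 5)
Your proof is correct. Note, however, that the paper itself does not prove this lemma at all: it is imported verbatim as Lemma 9 of \cite{krus:popi:14}, so there is no internal proof to compare against, and your argument serves as a self-contained reconstruction. The reduction you make is the right one: both assertions follow from the two-point inequality $|a+b|^\ell-|a|^\ell-\ell|a|^{\ell-1}\nu(a)\,b\ \geq\ c(\ell)\,b^2\left(|a|^2\vee(a+b)^2\right)^{\ell/2-1}$ applied with $(a,b)=(Y_{u^-},\Delta M_u)$, respectively $(a,b)=(Y_{u^-},\psi_u(z))$, and since both the left-hand terms (by convexity of $x\mapsto|x|^\ell$, $\ell>1$) and the right-hand terms are nonnegative, the termwise inequality passes to possibly infinite sums and to the integral against the nonnegative random measure $\pi$ without any convergence issue. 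Your two cases check out: in the non-crossing case, Taylor with integral remainder together with $|a+tb|\le|a|\vee|a+b|$ and $\ell-2<0$ gives exactly the constant $c(\ell)=\ell(\ell-1)/2$ via $\int_0^1(1-t)\,dt=1/2$, and the endpoint singularity of $\Phi''$ is indeed integrable because $\ell-1>0$; in the crossing case the identity $w^\ell+(\ell-1)u^\ell+\ell u^{\ell-1}w$ is correct, the factor bounds $w^{\ell-2},u^{\ell-2}\ge M^{\ell-2}$ are legitimate since $\ell-2<0$, and the three coefficients $\frac{(2-\ell)(\ell+1)}{2}$, $\frac{(\ell-1)(2-\ell)}{2}$, $\ell(2-\ell)$ of the final quadratic comparison are all nonnegative precisely for $1<\ell<2$ (with equality of the two sides in the limit $\ell\to2$, so the constant is sharp in that sense). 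This is the same style of convexity/Taylor estimate that underlies the cited Lemma 9; what your write-up buys is that the appendix lemma becomes verifiable within this paper rather than resting on the external reference.
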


The main step in the proof of Proposition \ref{thm:exists_sing_sol} is the convergence of the solution $(Y^L,\psi^L,M^L)$ of the BSDE \eqref{eq:truncated_bsde} with terminal condition $\xi^L = \xi \wedge L$. In order to carry out this step, we need suitable a priori estimates for the difference $Y^L-Y^N$. We proceed as in Proposition 3 in \cite{krus:popi:14}. These are established in Lemma \ref{lem:appendix_5} below. Let $0 \leq s \leq  t < T$. For $L$ and $N$ nonnegative, we put
$$\what Y_s = Y^{N}_s - Y^{L}_s, \quad \what \psi_s(z) = \psi^{N}_s(z) - \psi^{L}_s(z), \quad \what M_s = M^{N}_s - M^{L}_s.$$
W.l.o.g. we may assume that $\ell \leq 2$ and we choose $a= \ell  \|\vartheta\|^2_{L^2_\mu}/(\ell-1)$. Then It\^o's formula (see Lemma \ref{lem:appendix_1} above) implies
\begin{eqnarray} \nonumber
&& e^{as}|\what Y_s|^\ell \leq e^{at}|\what Y_t|^\ell -\int_s^t a e^{au} |\what Y_u|^\ell du \\ \nonumber
&& \quad + \ell \int_s^t e^{au}|\what Y_u|^{\ell-1} \nu(\what Y_u) (f^N(u,Y^N_u,\psi^N_u) - f^L(u,Y^L_u,\psi^L_u))du   \\ \nonumber
& &\quad  -  \ell  \int_s^t e^{au}|\what Y_{u^-}|^{\ell-1} \nu(\what Y_{u^-}) d\what M_u -  \ell \int_s^t e^{au}|\what Y_{u^-}|^{\ell-1} \nu(\what Y_{u^-})  \int_\cZ \what \psi_(z) \tpi(dz,du)   \\ \nonumber
& &\quad -  \int_{s}^{t}e^{au}  \int_\cZ \left[ |\what Y_{u^-}+\what \psi_u(z)|^\ell -|\what Y_{u^-}|^\ell - \ell|\what Y_{u^-}|^{\ell-1} \nu(\what Y_{u^-})  \psi_u(z) \right] \pi(dz,du) \\  \nonumber
&& \quad - \sum_{0< s \leq t} e^{au} \left[ |\what Y_{u^-}+\Delta \what M_u|^\ell - |\what Y_{u^-}|^\ell - \ell|\what Y_{u^-}|^{\ell-1} \nu(\what Y_{u^-})  \Delta \what M_u \right] \\ \label{eq:Ito_formula_ell}
&&\quad -c(\ell) \int_s^t e^{au} |\what Y_{u}|^{\ell-2}  \1_{\what Y_u\neq 0} d[ \what M ]^c_u .
\end{eqnarray}
Here $\nu (x) = |x|^{-1} x \1_{x\neq0}$ and $c(\ell)= \ell(\ell-1)/2$. For the term containing the generators we have
\begin{eqnarray*}
&& |\what Y_u|^{\ell-1} \nu(\what Y_u) (f^N(u,Y^N_u,\psi^N_u) - f^L(u,Y^L_u,\psi^L_u))\\
&&\quad  \leq  |\what Y_u|^{\ell-1} \nu(\what Y_u) (f^N(u,Y^N_u,\psi^N_u) - f^L(u,Y^N_u,\psi^N_u)) \\
&&\qquad + |\what Y_u|^{\ell-1} \nu(\what Y_u) (f^L(u,Y^N_u,\psi^N_u) - f^L(u,Y^L_u,\psi^L_u))   \\
&& \quad \leq |\what Y_u|^{\ell-1} \nu(\what Y_u) (f^0_u \wedge N- f^0_u \wedge L) +  |\what Y_u|^{\ell-1} \nu(\what Y_u) (f^L(u,Y^N_u,\psi^N_u) - f^L(u,Y^N_u,\psi^L_u))\\
& & \quad \leq |\what Y_u|^{\ell-1} |f^0_u \wedge N- f^0_u \wedge L| +  |\what Y_u|^{\ell-1} \left| \int_\cZ \what \psi_u(z) \kappa^{Y^N,\psi^N,\psi^L}_u(z) \mu(dz) \right| \\
&& \quad \leq |\what Y_u|^{\ell-1} |f^0_u \wedge N- f^0_u \wedge L| + \|\vartheta\|_{L^2_\mu} |\what Y_u|^{\ell-1} \|\what \psi_u\|_{L^2_\mu} 
\end{eqnarray*}
where we used monotonicity \textbf{A1} of $f^{L}$ w.r.t. $y$ (with $\chi=0$) and the condition \textbf{A2} of $f^L$ w.r.t. $\psi$. Then by Young's inequality
$$\ell  \|\vartheta\|_{L^2_\mu} |\what Y_u|^{\ell-1} \|\what \psi_u\|_{L^2_\mu} \leq \frac{\ell}{(\ell-1)}  \|\vartheta\|^2_{L^2_\mu}|\what Y_u|^{\ell} + \frac{c(\ell)}{2}|\what Y_u|^{\ell-2}  \|\what \psi_u\|^2_{L^2_\mu}.$$
We define
$$X =e^{at}  |\what Y_t|^\ell + \ell \int_0^t e^{au} |\what Y_u|^{\ell-1} |f^0_u \wedge N-f^0_u \wedge L| du .$$
From Lemma \ref{lem:appendix_2} we obtain for every $s \in [0,t]$:
\begin{eqnarray} \nonumber
&&e^{as} |\what Y_s|^\ell +c(\ell) \sum_{s< u \leq t}e^{au}|\Delta \what M_u|^2  \left( |\what Y_{u^-}|^2 \vee  |\what Y_{u^-} + \Delta \what M_u|^2 \right)^{\ell/2-1} \1_{|\what Y_{u^-}|\vee |\what Y_{u^-} + \Delta \what M_u| \neq 0} \\ \nonumber
&& \quad + c(\ell) \int_{s}^{t} e^{au} \int_\cZ |\what \psi_u(z)|^2  \left( |\what Y_{u^-}|^2 \vee  |\what Y_{u^-} +\what \psi_u(z)|^2 \right)^{\ell/2-1} \1_{|\what Y_{u^-}|\vee |\what Y_{u^-} + \what \psi_u(z)| \neq 0} \pi(dz,du) \\ \nonumber
&& \quad + c(\ell) \int_s^t e^{au} |\what Y_{u}|^{\ell-2}  \1_{\what Y_u\neq 0} d[ \what M ]^c_u -\frac{c(\ell)}{2} \int_s^t |\what Y_u|^{\ell-2}  \|\what \psi_u\|^2_{L^2_\mu} du \\  \label{eq:Ito_conv}
&& \leq X -  \ell  \int_s^t e^{au}|\what Y_{u^-}|^{\ell-1} \nu(\what Y_{u^-}) d\what M_u -  \ell \int_s^t e^{au} |\what Y_{u^-}|^{\ell-1} \nu(\what Y_{u^-})  \int_\cZ \what \psi_u(z) \tpi(dz,du) .
\end{eqnarray}
Indeed from the choice of $a$, the terms
$$\frac{\ell  \|\vartheta\|^2_{L^2_\mu}}{\ell-1}  \int_s^t e^{au} |\what Y_u|^\ell du = a \int_s^t e^{au} |\what Y_u|^\ell du$$
cancel each other. 

Lemma \ref{lem:appendix_5} is a consequence of the following two lemmas. 
\begin{Lemma}\label{lem:appendix_3}
There exists a constant $C_\ell$ depending only on $\ell$ such that for any $0 < t < T$
\begin{equation}\label{eq:conv_S_ell_Y}
\E \left(\sup_{s\in [0,t]}e^{as} |\what Y_s|^\ell \right) \leq C_\ell \E (X).
\end{equation}
\end{Lemma}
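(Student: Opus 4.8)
The plan is to read the bound off the weighted It\^o inequality \eqref{eq:Ito_conv} by a Burkholder--Davis--Gundy (BDG) argument, with no Gronwall step needed: recall that the choice $a=\ell\|\vartheta\|^2_{L^2_\mu}/(\ell-1)$ was made precisely so that the two $\int e^{au}|\what Y_u|^\ell du$ contributions cancel, so \eqref{eq:Ito_conv} carries no surviving $|\what Y|^\ell$ integral. For fixed $L,N$ the triples $(Y^L,\psi^L,M^L)$ and $(Y^N,\psi^N,M^N)$ lie in $\bS^2\times L^2_\pi\times\cM^2$ by Proposition \ref{prop:exist_solution_trunc_BSDE}; hence the two stochastic integrals on the right of \eqref{eq:Ito_conv} are genuine martingales and every bracket term is integrable, which legitimises taking expectations and applying BDG.

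First I would rewrite \eqref{eq:Ito_conv}, keeping the three nonnegative bracket terms (the jumps of $\what M$, the $\pi$-integral in $\what\psi$, and the continuous bracket $\int e^{au}|\what Y_u|^{\ell-2}\1_{\what Y_u\neq0}d[\what M]^c_u$) on the left, so that schematically
\[
e^{as}|\what Y_s|^\ell + \mathcal{D}_{s,t}\le X - m_t + m_s,\qquad 0\le s\le t,
\]
where $\mathcal{D}_{s,t}\ge 0$ collects the bracket terms minus the Young remainder $\frac{c(\ell)}{2}\int_s^t e^{au}|\what Y_u|^{\ell-2}\|\what\psi_u\|^2_{L^2_\mu}du$, and
\[
m_s=\ell\int_0^s e^{au}|\what Y_{u^-}|^{\ell-1}\nu(\what Y_{u^-})\,d\what M_u+\ell\int_0^s e^{au}|\what Y_{u^-}|^{\ell-1}\nu(\what Y_{u^-})\int_\cZ\what\psi_u(z)\tpi(dz,du).
\]
Taking the supremum over $s\in[0,t]$, using $\sup_s|m_s-m_t|\le 2\sup_s|m_s|$, and then taking expectations yields $\E[\sup_{s\le t}e^{as}|\what Y_s|^\ell]\le \E[X]+2\,\E[\sup_{s\le t}|m_s|]+(\text{Young remainder})$.

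The workhorse is the BDG estimate for $m$. Since $\what M\perp\tpi$, its quadratic variation is $[m]_t=\ell^2\int_0^t e^{2au}|\what Y_{u^-}|^{2\ell-2}\big(d[\what M]_u+\int_\cZ|\what\psi_u(z)|^2\pi(dz,du)\big)$. Writing $|\what Y_{u^-}|^{2\ell-2}=|\what Y_{u^-}|^\ell\,|\what Y_{u^-}|^{\ell-2}$ and $e^{2au}=e^{au}e^{au}$, pulling out $(\sup_{u\le t}e^{au}|\what Y_u|^\ell)^{1/2}$, and applying $\E[\sup|m|]\le C_{BDG}\E[[m]_t^{1/2}]$ together with the elementary inequality $\sqrt{ab}\le \eps a+\tfrac1{4\eps}b$, I would split the BDG term into a piece $\le \tfrac12\E[\sup_{s\le t}e^{as}|\what Y_s|^\ell]$ (choosing $\eps$ small enough to beat the factor $2$), which is absorbed into the left-hand side, plus a constant multiple of $\E\int_0^t e^{au}|\what Y_{u^-}|^{\ell-2}\big(d[\what M]_u+\int_\cZ|\what\psi_u(z)|^2\pi(dz,du)\big)$, which is again of bracket type.

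The decisive step, and the one I expect to be the main obstacle, is that the two bracket-type remainders on the right (the BDG remainder just produced and the Young remainder $\frac{c(\ell)}{2}\int e^{au}|\what Y_u|^{\ell-2}\|\what\psi_u\|^2 du$) must be dominated by the nonnegative brackets $\mathcal{D}_{s,t}$ already present on the left of \eqref{eq:Ito_conv}. This is exactly where the sharp pathwise inequalities of Lemma \ref{lem:appendix_2} enter: the genuine jump increments $c(\ell)|\what\psi_u(z)|^2\big(|\what Y_{u^-}|^2\vee|\what Y_{u^-}+\what\psi_u(z)|^2\big)^{\ell/2-1}\1$ control $|\what Y_{u^-}|^{\ell-2}|\what\psi_u(z)|^2$ up to a universal factor on the set $\{|\what\psi_u(z)|\le|\what Y_{u^-}|\}$ (using $2^{\ell-2}\ge\tfrac12$, valid since $\ell>1$), and the complementary large-jump contribution has to be handled by the same sharp bound after replacing $\pi$ by $\tpi+\mu\,dt$ and discarding the resulting compensated martingale --- this small/large jump bookkeeping for $\ell<2$ is the delicate part. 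Once the constants are balanced --- the weight $a$, the factor $\tfrac{c(\ell)}{2}$ in Young, and $C_{BDG}$ --- so that $\mathcal{D}_{s,t}$ outweighs both remainders, all bracket terms cancel and one is left with $\E[\sup_{s\le t}e^{as}|\what Y_s|^\ell]\le C_\ell\,\E[X]$, with $C_\ell$ depending only on $\ell$, which is precisely \eqref{eq:conv_S_ell_Y}.
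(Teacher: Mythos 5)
Your overall architecture (the weighted It\^o inequality \eqref{eq:Ito_conv}, BDG, absorption of $\tfrac12\E[\sup]$) is the same as the paper's, but the step you yourself flag as decisive is precisely where your argument cannot close as described. First, your decomposition needs $\mathcal{D}_{s,t}\ge 0$ pathwise, i.e.\ that the $\pi$-integral bracket dominates the Young remainder $\frac{c(\ell)}{2}\int_s^t e^{au}|\what Y_u|^{\ell-2}\1_{\what Y_u\neq 0}\|\what\psi_u\|^2_{L^2_\mu}du$ path by path. This is false in general: the bracket is a purely discontinuous increasing process while the remainder is absolutely continuous in time, so on any interval carrying no jump of $\pi$ the remainder grows and the bracket does not. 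The two are comparable only \emph{in expectation}, through the compensator identity
$$\E\int_s^t e^{au}\int_\cZ|\what\psi_u(z)|^2\bigl(|\what Y_{u^-}|^2\vee|\what Y_{u^-}+\what\psi_u(z)|^2\bigr)^{\ell/2-1}\1_{|\what Y_{u^-}|\vee|\what Y_{u^-}+\what\psi_u(z)|\neq 0}\,\pi(dz,du)=\E\int_s^t e^{au}\|\what\psi_u\|^2_{L^2_\mu}|\what Y_u|^{\ell-2}\1_{\what Y_u\neq 0}\,du.$$
Second, the constants cannot be ``balanced'' in the direction you need: absorbing $2\E[\sup_s|m_s|]$ forces the choice $\eps\lesssim 1/C_{BDG}$, so the surviving BDG remainder carries a factor of order $C_{BDG}^2\ell^2\ge 2$, whereas the brackets on the left of \eqref{eq:Ito_conv} carry at most the coefficient $c(\ell)=\ell(\ell-1)/2\le 1$ (only $c(\ell)/2$ net of the Young term). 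A large-constant term cannot be cancelled against a small-constant term of the same type; and the fallback estimate $\E[\mathcal{D}_{0,t}]\le\E[X]+2\E[\sup_s|m_s|]$ is circular, since $\E[\sup_s|m_s|]$ reappears on the right multiplied by that same large factor. (A third defect: your splitting $e^{2au}|\what Y_{u^-}|^{2\ell-2}=(e^{au}|\what Y_{u^-}|^\ell)(e^{au}|\what Y_{u^-}|^{\ell-2})$ leaves the jump remainder $|\what Y_{u^-}|^{\ell-2}|\Delta\what M_u|^2$, which on large jumps is not dominated by $(|\what Y_{u^-}|^2\vee|\what Y_{u^-}+\Delta\what M_u|^2)^{\ell/2-1}|\Delta\what M_u|^2$ up to any constant, because $\ell-2<0$; no small/large-jump bookkeeping repairs this.)

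The idea you are missing is one of order of operations: obtain an expectation bound on all bracket terms \emph{before} invoking BDG, at a stage where no constant is lost. Evaluate \eqref{eq:Ito_conv} at $s=0$, localize by stopping times (your claim that the stochastic integrals are already true martingales does not follow from the $\bS^2\times L^2_\pi\times\cM^2$ bounds alone when $\ell<2$), and take expectations: the martingale increments vanish exactly, and the compensator identity above lets the $\pi$-bracket (coefficient $c(\ell)$) swallow the subtracted Young remainder (coefficient $c(\ell)/2$). This is \eqref{eq:Lp_estim_p_leq_2}: every quadratic-variation-type term has expectation at most $2\E[X]$. Only then apply BDG, and before splitting enlarge $|\what Y_{u^-}|^{2\ell-2}\le(|\what Y_{u^-}|^2\vee|\what Y_{u^-}+\Delta\what M_u|^2)^{\ell-1}$, so that after pulling out $(\sup_u e^{au}|\what Y_u|^\ell)^{1/2}$ the remainder is exactly of the max-form controlled by \eqref{eq:Lp_estim_p_leq_2} (this also eliminates your large-jump problem). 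The absorption then yields $\E[\sup_s e^{as}|\what Y_s|^\ell]\le\frac12\E[\sup_s e^{as}|\what Y_s|^\ell]+C_\ell\E[X]$, which is \eqref{eq:conv_S_ell_Y}. So: same tools as the paper, but the paper's two-pass structure (expectation bound on brackets first, BDG second) is essential, and your single-pass cancellation scheme does not close.
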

\begin{proof}
Indeed we take $\tau_k$ as a fundamental sequence of stopping times for the local martingale 
$$\int_{0}^{.} e^{au}|\what Y_{u^-}|^{\ell-1} \nu(\what Y_{u^-}) \left( d\what M_u  +  \int_\cZ \what \psi_u(z) \tpi(dz,du) \right)$$
and $\hat \tau_k$ as a localization time
$$\hat \tau_k  = \inf \left\{ t\geq 0, \ \int_{0}^{t}  \int_\cZ e^{au} |\what \psi_u(z)|^2  \left( |\what Y_{u^-}|^2 \vee  |\what Y_u|^2 \right)^{\ell/2-1} \1_{|\what Y_{u^-}|\vee |\what Y_u| \neq 0} \pi(dz,du) \geq k \right\}.$$
We set $\tau = \tau_k \wedge \hat \tau_k\wedge t$. Now we have:
\begin{eqnarray*}\nonumber
&& \E  \int_{0}^{\tau} e^{au}  \int_\cU |\what \psi_s(u)|^2  \left( |\what Y_{s^-}|^2 \vee  |\what Y_{s^-} +\what \psi_s(u)|^2 \right)^{p/2-1} \1_{|\what Y_{s^-}|\vee |\what Y_{s^-} + \what \psi_s(u)| \neq 0} \pi(du,ds) \\ \nonumber
&& \quad =  \E  \int_{0}^{\tau} e^{au} \int_\cU |\what \psi_s(u)|^2  \left( |\what Y_{s^-}|^2 \vee  |\what Y_{s}|^2 \right)^{p/2-1} \1_{|\what Y_{s^-}|\vee |\what Y_{s}| \neq 0} \pi(du, ds) \\ 
&& \quad = \E  \int_{0}^{\tau}e^{au}  \int_\cU |\what \psi_s(u)|^2  |\what Y_{s}|^{p-2} \1_{\what Y_{s} \neq 0} \mu(du) ds  = \E  \int_{0}^{\tau} e^{au}\|\what \psi_s\|_{L^2_\mu}^2  |\what Y_{s}|^{p-2} \1_{\what Y_{s} \neq 0} ds. 
\end{eqnarray*}
From this equality and taking the expectation in \eqref{eq:Ito_conv} we deduce that 
\begin{eqnarray} \nonumber
&&c(\ell) \E  \sum_{0< u \leq \tau}e^{au}|\Delta \what M_u|^2  \left( |\what Y_{u^-}|^2 \vee  |\what Y_{u^-} + \Delta \what M_u|^2 \right)^{\ell/2-1} \1_{|\what Y_{u^-}|\vee |\what Y_{u^-} + \Delta \what M_u| \neq 0} \\ \nonumber
&& \quad + c(\ell) \E  \int_0^\tau e^{au} |\what Y_{u}|^{\ell-2}  \1_{\what Y_u\neq 0} d[ \what M ]^c_u + \frac{c(\ell)}{2} \E \int_0^\tau e^{au} |\what Y_u|^{\ell-2}  \|\what \psi_u\|^2_{L^2_\mu} du \\ \nonumber
&& \quad + \frac{c(\ell)}{2}\E \int_{0}^{\tau} e^{au} \int_\cZ |\what \psi_u(z)|^2  \left( |\what Y_{u^-}|^2 \vee  |\what Y_{u^-} +\what \psi_u(z)|^2 \right)^{\ell/2-1} \1_{|\what Y_{u^-}|\vee |\what Y_{u^-} + \what \psi_u(z)| \neq 0} \pi(dz,du) \\ \label{eq:Lp_estim_p_leq_2} 
&&\quad \leq 2 \E (X) 
\end{eqnarray}
and we can allow $\tau$ to be equal to $t$ in this last inequality. Then using the Burkholder-Davis-Gundy inequality  in \eqref{eq:Ito_conv} we obtain that:
$$\E\left(  \sup_{0\leq s \leq t} e^{as} |\what Y_s|^\ell\right) \leq \E(X) +k_\ell \E\left(  [M^Y]_t^{1/2} +  [\tpi^Y]_t^{1/2} \right)$$
with
$$M^Y_s + \tpi^Y_s = \ell  \int_0^s e^{au}|\what Y_{u^-}|^{\ell-1} \nu(\what Y_{u^-}) d\what M_u +  \ell \int_0^s e^{au} |\what Y_{u^-}|^{\ell-1} \nu(\what Y_{u^-})  \int_\cZ \what \psi_u(z) \tpi(dz,du).$$
Since $\ell>1$, the bracket of the first martingale is controlled by:
\begin{eqnarray*}
&&k_\ell  \E\left(  [M^Y]_t^{1/2} \right) \\
&& \leq k_\ell \E\left[ \left( \int_0^t e^{2au} \left( |\what Y_{u^-}|^2 \vee  |\what Y_{u^-} + \Delta \what M_u|^2 \right)^{\ell-1}  \1_{|\what Y_{u^-}|\vee |\what Y_{u^-} + \Delta \what M_u| \neq 0} d[\what M]_u \right)^{1/2} \right] \\
&& \quad \leq  \frac{1}{4}  \E \left( \sup_{0\leq u \leq t} e^{au} |\what Y_u|^\ell \right) + k_\ell^2 \E \left( \int_0^T  e^{au} |\what Y_{u^-}|^{\ell-2}  \1_{|\what Y_{u^-}| \neq 0} d[\what M]^c_u \right) \\
&& \qquad \qquad + k_\ell^2 \E \left( \sum_{0< s \leq T} e^{au} \left( |\what Y_{u^-}|^2 \vee  |\what Y_{u^-} + \Delta \what M_u|^2 \right)^{\ell/2-1} \1_{|\what Y_{u^-}|\vee |\what Y_{u^-} + \Delta \what M_u| \neq 0} |\Delta \what M_u|^2 \right)
\end{eqnarray*}
and for the second
\begin{eqnarray*}
k_\ell \E \left(  [\tpi^Y]_t^{1/2} \right) & \leq & k_\ell \E \left[\left( \sup_{0\leq u \leq t} \left( e^{au} |\what Y_u|^\ell \right)\right)^{\frac{1}{2}} \left( \int_0^t e^{au} |Y_{u}|^{\ell-2}\1_{Y_u\neq 0} \int_\cZ |\psi_u(z)|^2 \pi(dz,ds) \right)^{\frac{1}{2}} \right] \\
& \leq  & \frac{1}{4}  \E \left( \sup_{0\leq u \leq t} e^{au} |\what Y_u|^\ell  \right) +  k_p^2 \E \left( \int_0^t e^{au} |Y_{u}|^{\ell-2} \|\psi_u\|^2_{L^2_\mu} \1_{Y_u \neq 0} du \right).
\end{eqnarray*}
Hence the Inequality \eqref{eq:conv_S_ell_Y} is proved. 
\end{proof}

We apply again Young's inequality to obtain that
\begin{equation}\label{eq:appendix_2}
C_\ell \E(X) \leq C_\ell \E \left( e^{at} |\what Y_t|^\ell \right) + \frac{1}{2} \E \left(\sup_{s\in [0,t]} e^{as} |\what Y_s|^\ell \right) + \bar C_\ell \E \int_0^t e^{au} |f^0_u \wedge N-f^0_u \wedge L|^\ell du
\end{equation}
and we can conclude that 
\begin{equation} \label{eq:appendix_1}
\E \left(\sup_{s\in [0,t]} e^{as} |\what Y_s|^\ell \right) \leq \hat C_\ell \E \left( e^{at}|\what Y_t|^\ell \right) + \hat C_\ell \E \int_0^t e^{au} |f^0_u \wedge N-f^0_u \wedge L|^\ell du.
\end{equation}
Next, we derive a similar inequality for $\psi^L$ and $M^L$. 
\begin{Lemma}\label{lem:appendix_4}
There exists a constant $\wtil C_\ell$ such that for any $0 < t < T$
\begin{equation*}
\E \left[ \left( \int_0^t e^{2as/\ell} d[\what M]_s \right)^{\ell/2} + \left( \int_0^t e^{2as/\ell}  \int_\cZ |\psi_s(z)|^2 \mu(dz)ds \right)^{\ell/2}\right] \leq \wtil C_\ell \E(X).
\end{equation*}
\end{Lemma}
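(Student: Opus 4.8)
The engine of the proof is the elementary factorization, valid on $\{\what Y_u\neq0\}$ and for $\ell\le 2$,
\begin{equation*}
e^{2au/\ell}=\left(e^{au}|\what Y_u|^{\ell-2}\right)\left(e^{au}|\what Y_u|^{\ell}\right)^{(2-\ell)/\ell},
\end{equation*}
which splits the $e^{2au/\ell}$-weighted integrand into the $|\what Y|^{\ell-2}$-weighted integrand already controlled in \eqref{eq:Lp_estim_p_leq_2} times a power of $e^{au}|\what Y_u|^\ell$ dominated by $\sup_{s\le t}e^{as}|\what Y_s|^\ell$. The plan is to pull this supremum out of each quadratic-variation integral and then separate the two factors by H\"older's inequality with the conjugate exponents $2/(2-\ell)$ and $2/\ell$, after which \eqref{eq:conv_S_ell_Y} controls the first factor while \eqref{eq:Lp_estim_p_leq_2} controls the second.

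I would first treat the bracket term $\int_0^t e^{2au/\ell}d[\what M]_u$, which is the easier of the two because $[\what M]$ is the \emph{optional} quadratic variation. On its continuous part I use $\int_0^t\1_{\what Y_u=0}d[\what M]^c_u=0$ (Lemma \ref{lem:appendix_1}) to insert the indicator $\1_{\what Y_u\neq0}$ and apply the factorization above; on the jump part I keep the weight $(|\what Y_{u^-}|\vee|\what Y_u|)^{\ell-2}$ exactly as it occurs in \eqref{eq:Lp_estim_p_leq_2} and bound the companion factor $e^{au(2-\ell)/\ell}(|\what Y_{u^-}|\vee|\what Y_u|)^{2-\ell}$ by $\left(\sup_{s\le t}e^{as}|\what Y_s|^\ell\right)^{(2-\ell)/\ell}$, using that a left limit is dominated by the running supremum and that $e^{au}$ is continuous and increasing. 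This yields a pointwise bound $\int_0^t e^{2au/\ell}d[\what M]_u\le\left(\sup_{s\le t}e^{as}|\what Y_s|^\ell\right)^{(2-\ell)/\ell}G_M$, with $G_M$ the bracket expression whose expectation is bounded by $\tfrac{2}{c(\ell)}\E(X)$ in \eqref{eq:Lp_estim_p_leq_2}; raising to the power $\ell/2$, applying H\"older and inserting \eqref{eq:conv_S_ell_Y} and \eqref{eq:Lp_estim_p_leq_2} gives the estimate for this term.

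The genuinely delicate point, which I expect to be the main obstacle, is the $\what\psi$ term, since the statement asks for the \emph{predictable} integral $\int_0^t e^{2au/\ell}\|\what\psi_u\|^2_{L^2_\mu}du$. The naive factorization of this predictable integral cannot be pushed through: the companion factor would involve the hypothetical post-jump value $e^{au/\ell}|\what Y_{u^-}+\what\psi_u(z)|$, which, for marks $z$ that are not realized jumps, is in no way dominated by $\sup_{s\le t}e^{as/\ell}|\what Y_s|$; moreover the value-weight $|\what Y_u|^{\ell-2}$ is singular on $\{\what Y_u=0\}$, where $\|\what\psi_u\|_{L^2_\mu}$ need not vanish. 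My plan is therefore to work with the \emph{optional} quadratic variation $\int_0^t\int_\cZ e^{2au/\ell}|\what\psi_u(z)|^2\pi(dz,du)$ instead: at an actual jump $(u,z)$ of $\pi$ one genuinely has $\what Y_u=\what Y_{u^-}+\what\psi_u(z)$, so $(|\what Y_{u^-}|^2\vee|\what Y_{u^-}+\what\psi_u(z)|^2)^{(\ell-2)/2}=(|\what Y_{u^-}|\vee|\what Y_u|)^{\ell-2}$, the companion factor is again bounded by the running supremum, and on $\{\what Y_{u^-}=0\}$ this weight degenerates to $|\what\psi_u(z)|^{\ell-2}$ so the integrand becomes $|\what\psi_u(z)|^{\ell}$, perfectly controlled; the remaining factor is exactly the weighted $\pi$-integral of \eqref{eq:Lp_estim_p_leq_2}, and the same H\"older argument bounds $\E\!\left[\left(\int_0^t\int_\cZ e^{2au/\ell}|\what\psi_u(z)|^2\pi(dz,du)\right)^{\ell/2}\right]$ by $\wtil C_\ell\E(X)$. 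The last step is to transfer this bound to the predictable compensator $\int_0^t e^{2au/\ell}\|\what\psi_u\|^2_{L^2_\mu}du$; because the exponent $\ell/2$ is strictly below $1$, this transfer cannot be a blunt comparison between an increasing process and its compensator, and must instead exploit the precise identity between the weighted optional and predictable quantities established just before \eqref{eq:Lp_estim_p_leq_2}. Reconciling the predictable target with the realized-jump information is the technical crux; once it is settled, adding the two contributions and renaming constants produces $\wtil C_\ell$ and completes the proof.
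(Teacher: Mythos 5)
Your treatment of the bracket term $\int_0^t e^{2as/\ell}d[\what M]_s$ is correct and is essentially the paper's own argument: insert $\1_{\what Y_s\neq 0}$ in the continuous part via Lemma \ref{lem:appendix_1}, use for the jump part that $\Delta \what M_u$ vanishes when $|\what Y_{u^-}|\vee|\what Y_u|=0$, then factorize, apply H\"older/Young and conclude with \eqref{eq:conv_S_ell_Y} and \eqref{eq:Lp_estim_p_leq_2}. (The paper additionally smooths the weight with $u_\eps(y)=(|y|^2+\eps^2)^{1/2}$ before letting $\eps\to 0$, but that is a technical device, not a different idea.)

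The $\what\psi$ term is where your proposal has a genuine gap: you never prove the inequality the lemma asserts. You establish (correctly) the bound for the \emph{optional} integral $\int_0^t\int_\cZ e^{2au/\ell}|\what\psi_u(z)|^2\pi(dz,du)$, and then defer the passage to the \emph{predictable} integral as ``the technical crux \dots once it is settled''. That deferred step is the entire difficulty, and it cannot be supplied by the tools you cite. First, for exponents $\ell/2<1$ there is no general inequality $\E[(\wtil A_t)^{\ell/2}]\le C\,\E[(A_t)^{\ell/2}]$ between an increasing process $A$ and its compensator $\wtil A$: for a standard Poisson process $A_t=N_t$, $\wtil A_t=t$, one has $\E[t^{\ell/2}]=t^{\ell/2}$ while $\E[N_t^{\ell/2}]\sim t$ as $t\to 0$, so the ratio explodes; Lenglart-type domination goes in the \emph{opposite} direction (optional bounded by predictable). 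Second, the ``precise identity'' before \eqref{eq:Lp_estim_p_leq_2} does not bridge the gap: compensating the weighted optional integral yields, on the problematic set $\{\what Y_{u^-}=0\}$, only $\E\int_0^t\int_\cZ \1_{\what Y_{u^-}=0}\,e^{au}|\what\psi_u(z)|^\ell\,\mu(dz)du\le C\E(X)$, and an $L^\ell_\mu$-type bound cannot control the $L^2_\mu$-type target $\E[(\int_0^t \1_{\what Y_u=0}\,e^{2au/\ell}\|\what\psi_u\|^2_{L^2_\mu}du)^{\ell/2}]$ when $\ell<2$: if $\what\psi$ has height $c$ on a set of $\mu$-measure $\delta$, the former scales like $c^\ell\delta$ and the latter like $c^\ell\delta^{\ell/2}$. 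For comparison, the paper does not pass through the optional integral at all: it applies ``the same argument'' directly to the predictable integral, i.e.\ factorizes against $e^{as}|\what Y_s|^{\ell-2}\1_{\what Y_s\neq 0}$ and uses the weighted \emph{predictable} term on the left-hand side of \eqref{eq:Lp_estim_p_leq_2}; this avoids your transfer problem, but it implicitly requires $\|\what\psi_s\|_{L^2_\mu}\1_{\what Y_s=0}=0$ for $ds\otimes\P$-a.e.\ $(s,\omega)$, precisely the degeneracy you identified and for which no analogue of Lemma \ref{lem:appendix_1} is available. So your diagnosis of the difficulty is sharp — sharper than the paper's one-line dismissal — but the route you propose replaces the missing argument by a comparison that fails in general, rather than closing the gap.
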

\begin{proof}
From Lemma \ref{lem:appendix_1}, it holds a.s.
$$\int_0^t \1_{\what Y_s=0} d[\what M]^c_s = 0.$$
Hence
\begin{eqnarray} \nonumber
&& \E \left[ \left( \int_0^t e^{2as/\ell} d[\what M]^c_s \right)^{\ell/2} \right] =  \E\left[  \left( \int_0^t  e^{2as/\ell} \1_{Y_s \neq 0} d[\what M]^c_s\right)^{\ell/2}\right] \\ \nonumber
&& \quad \leq \E \left[ \left( \sup_{0\leq u\leq t}e^{au} |\what Y_u|^\ell\right)^{(2-\ell)/2} \left(\int_0^t e^{as} \left| \what Y_{s} \right|^{\ell-2}\1_{\what Y_s \neq 0}  d[\what M]^c_s \right)^{\ell/2}\right]\\ \nonumber
&& \quad  \leq \frac{2-\ell}{2} \E \left[ \sup_{0\leq u\leq t} e^{au} |\what Y_u|^\ell\right] + \frac{\ell}{2} \E \int_0^t e^{as} \left| \what Y_{s} \right|^{\ell-2} \1_{\what Y_s \neq 0}  d[\what M]^c_s 
\end{eqnarray}
where we have used H\"older's and Young's inequality with $ \frac{2-\ell}{2} + \frac{\ell}{2}=1$. With Inequality \eqref{eq:Lp_estim_p_leq_2} we deduce:
\begin{eqnarray*}
&&  \E \left[ \left( \int_0^t e^{2as/\ell} d[\what M]^c_s \right)^{\ell/2} \right] \leq \wtil C_p \E(X).
\end{eqnarray*}
For the pure-jump part of $[M]$, let $\eps > 0$ and consider the function $u_\eps(y) = (|y|^2 + \eps^2)^{1/2}$. Then
\begin{eqnarray*}
&& \E \left[\left( \sum_{0<s\leq t} e^{2as/\ell} |\Delta \what M_s|^2 \right)^{\ell/2} \right]\\
&& \quad \leq \E \left[ \left( \sup_{0\leq s\leq t} e^{as/\ell} u_\eps(\what Y_{s}) \right)^{\ell(2-\ell)/2} \left(\sum_{0<s\leq t} e^{as} \left( u_\eps(|\what Y_{s^-}| \vee  |\what Y_{s^-} + \Delta \what M_s|) \right)^{\ell-2} |\Delta \what M_s|^2 \right)^{\ell/2}\right]\\
&& \quad \leq \left\{\E \left[ \left( \sup_{0\leq s\leq t} e^{as/\ell}u_\eps(\what Y_{s})\right)^{\ell}\right]\right\}^{(2-\ell)/2} \\
&&\qquad \qquad \times \left\{\E\left(  \sum_{0< s \leq t} e^{as} \left( u_\eps(|\what Y_{s^-}| \vee  |\what Y_{s^-} + \Delta \what M_s|) \right)^{\ell-2} |\Delta \what M_s|^2 \right) \right\}^{\ell/2} \\
&& \quad \leq \frac{2-\ell}{2} \E \left[  \sup_{0\leq s\leq t} e^{as} u_\eps(\what Y_{s})^\ell \right] + \frac{\ell}{2}\E\left(  \sum_{0< s \leq t}e^{as} \left( u_\eps(|\what Y_{s^-}| \vee  |\what Y_{s^-} + \Delta \what M_s| )\right)^{\ell-2} |\Delta \what M_s|^2 \right)
\end{eqnarray*}
Let $\eps$ go to zero with Inequality \eqref{eq:Lp_estim_p_leq_2}
\begin{eqnarray*}
&& \E\left[ \left(  \sum_{0<s\leq t} e^{2as/\ell}|\Delta \what M_s|^2  \right)^{\ell/2}\right]  \leq \frac{2-\ell}{2} \E \left(  \sup_{0\leq s\leq t} e^{as}|\what Y_{s}|^\ell \right) \\
&&\qquad + \frac{\ell}{2}\E\left( \sum_{0 <s\leq t} e^{as} \left( |\what Y_{s^-}| \vee  |\what Y_{s^-} + \Delta \what M_s| \right)^{\ell-2}  \1_{|\what Y_{s^-}|\vee |\what Y_{s^-} + \Delta \what M_s| \neq 0}|\Delta \what M_s|^2 \right)\\
&& \quad \leq  \wtil C_\ell \E(X).
\end{eqnarray*}
The same argument shows that 
\begin{eqnarray*}
\E \left[ \left( \int_0^te^{2as/\ell} \int_\cZ |\psi_s(z)|^2 \mu(dz)ds \right)^{\ell/2} \right] & \leq &\wtil C_\ell \E(X). 
\end{eqnarray*}
\end{proof}

Combining estimates of Lemmas \ref{lem:appendix_3} and \ref{lem:appendix_4} with Inequalities \eqref{eq:appendix_2} and \eqref{eq:appendix_1} we obtain the desired result:
\begin{Lemma} \label{lem:appendix_5}
There exists a constant $K_\ell$ such that for any $0 < t < T$
\begin{eqnarray*}
&& \E\left[ \sup_{s\in [0,t]} e^{as} |\what Y_s|^\ell +  \left(  \int_0^te^{2as/\ell} \int_\cZ |\what \psi_u(z)|^2\mu(dz) du \right)^{\ell/2} +\left( \int_0^t e^{2as/\ell}d[\what M]_s \right)^{\ell/2} \right]\\
&&\qquad \leq K_{\ell} \E \left(e^{at} |\what Y_t|^\ell \right) + K_{\ell} \E \left( \int_0^t e^{au}|f^0_u \wedge N- f^0_u \wedge L|^\ell du\right)
\end{eqnarray*}
where $K_{\ell}$ depends only on $\ell$.
\end{Lemma}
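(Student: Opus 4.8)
The plan is to combine the two a priori bounds of Lemmas \ref{lem:appendix_3} and \ref{lem:appendix_4} — each of which already controls its own left-hand side by $\E(X)$ — with a bound on $\E(X)$ itself expressed in terms of the two quantities that appear on the right-hand side of the asserted inequality. Throughout I would abbreviate $g_u = f^0_u \wedge N - f^0_u \wedge L$, so that by definition
$$X = e^{at}|\what Y_t|^\ell + \ell \int_0^t e^{au}|\what Y_u|^{\ell-1}|g_u|\, du.$$

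The first step is to bound $\E(X)$. The only term requiring attention is the cross term, and there Young's inequality with conjugate exponents $\ell/(\ell-1)$ and $\ell$ gives, for any $\delta>0$, a constant $C_\delta$ with $\ell|\what Y_u|^{\ell-1}|g_u| \leq \delta|\what Y_u|^\ell + C_\delta|g_u|^\ell$. Hence
$$\int_0^t e^{au}|\what Y_u|^{\ell-1}|g_u|\,du \leq \delta\, t\, \sup_{s\in[0,t]} e^{as}|\what Y_s|^\ell + C_\delta \int_0^t e^{au}|g_u|^\ell du,$$
using $\int_0^t e^{au}|\what Y_u|^\ell du \leq t\,\sup_{s\in[0,t]} e^{as}|\what Y_s|^\ell$. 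I would then invoke the already-established estimate \eqref{eq:appendix_1} to replace the expectation of this supremum by a multiple of $\E(e^{at}|\what Y_t|^\ell)+\E\int_0^t e^{au}|g_u|^\ell du$, and collect terms to obtain a constant $K'_\ell$ such that
$$\E(X) \leq K'_\ell\, \E\!\left(e^{at}|\what Y_t|^\ell\right) + K'_\ell\, \E\!\left(\int_0^t e^{au}|g_u|^\ell du\right).$$

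With this in hand the conclusion is immediate. Lemma \ref{lem:appendix_3} bounds $\E(\sup_{s\in[0,t]} e^{as}|\what Y_s|^\ell)$ by $C_\ell\E(X)$, while Lemma \ref{lem:appendix_4} bounds the expectation of the two remaining terms $\big(\int_0^t e^{2as/\ell}d[\what M]_s\big)^{\ell/2}$ and $\big(\int_0^t e^{2as/\ell}\int_\cZ|\what\psi_u(z)|^2\mu(dz)du\big)^{\ell/2}$ by $\wtil C_\ell\E(X)$. Summing the three contributions and substituting the bound on $\E(X)$ yields the asserted inequality with $K_\ell = (C_\ell+\wtil C_\ell)K'_\ell$.

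The step deserving the most care — and which is really the analytic heart, already performed in passing from \eqref{eq:appendix_2} to \eqref{eq:appendix_1} — is the absorption of the supremum term: the right-hand side of \eqref{eq:appendix_2} contains $\tfrac12\E(\sup_{s\in[0,t]}e^{as}|\what Y_s|^\ell)$, and moving it to the left is legitimate only because this expectation is finite a priori. That finiteness holds since $Y^L,Y^N\in\bS^2(0,T)$ by Proposition \ref{prop:exist_solution_trunc_BSDE} and, having reduced to $\ell\leq 2$ with $e^{as}$ bounded on the compact interval $[0,t]$ for $t<T$, one gets $\what Y\in\bS^\ell(0,t)$. Beyond this point the argument is purely a bookkeeping of constants via Young's inequality, requiring no further analytic input.
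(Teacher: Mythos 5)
Your proposal is correct and follows essentially the same route as the paper: both bound the two quantities of Lemmas \ref{lem:appendix_3} and \ref{lem:appendix_4} by $\E(X)$, then control $\E(X)$ itself via Young's inequality on the cross term $\ell|\what Y_u|^{\ell-1}|f^0_u\wedge N-f^0_u\wedge L|$ together with the absorption argument encoded in \eqref{eq:appendix_2}--\eqref{eq:appendix_1}. Your explicit remark that the absorption of $\tfrac12\E(\sup_{s\in[0,t]}e^{as}|\what Y_s|^\ell)$ is legitimate because $\what Y\in\bS^2(0,T)$ (hence the supremum has finite expectation for $\ell\le 2$) makes precise a step the paper leaves implicit, but it is the same argument.
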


\subsection*{Some details concerning the conditions \textbf{B} and \textbf{A3'}}

Recall that $\delta^*$ and $h^*$ are defined by the formulas \eqref{eq:def_delta_min} and \eqref{eq:def_h_min}.
\begin{Lemma}\label{lem:optimal_delta_h}
If $\rho > \delta^*$ and $m > h^*$, then there exists $r >1$ such that 
$$r \left[ \chi + \frac{K^2}{2((r-1)\wedge 1)} \right] < \rho \quad \mbox{and} \quad \frac{r\delta}{\rho - \delta} < m.$$
\end{Lemma}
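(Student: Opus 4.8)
The plan is to regard the quantity
$$\delta(r) = r\Big[\chi + \frac{K^2}{2((r-1)\wedge 1)}\Big]$$
as a function of $r>1$ and to read the two required inequalities as $\delta(r) < \rho$ and $G(r) := \frac{r\delta(r)}{\rho - \delta(r)} < m$. Here $\delta$ is given by two explicit expressions: on $(1,2]$ one has $\delta(r) = r\chi + \frac{rK^2}{2(r-1)}$, while on $[2,\infty)$ one has $\delta(r) = r(\chi + \tfrac{K^2}{2})$. First I would establish that $\delta^* = \inf_{r>1}\delta(r)$, matching \eqref{eq:def_delta_min}: differentiating on $(1,2]$ gives $\delta'(r) = \chi - \frac{K^2}{2(r-1)^2}$, whose only zero is the interior point $r^* = 1 + \frac{K}{\sqrt{2\chi}}$ (relevant precisely when $\chi>0$); a short case analysis on the sign and size of $\chi$ relative to $K^2$ shows that the minimiser is $r^*$ when $2\chi > K^2$ (giving $\delta^* = \chi(1+\tfrac{K}{\sqrt{2\chi}})^2$, equivalently $\sqrt{\delta^*} = \sqrt\chi + \tfrac{K}{\sqrt2}$), is $r=2$ when $2|\chi|\le K^2$ (giving $\delta^* = 2\chi + K^2$), and that $\delta(r)\to -\infty$ when $2\chi < -K^2$ (giving $\delta^* = -\infty$). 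Consequently the set $\{r>1 : \delta(r) < \rho\}$ is nonempty precisely because $\rho > \delta^*$ by hypothesis.

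Next I would bound $G$ on this nonempty feasible set. Since $\rho - \delta > 0$ there, condition (ii) is equivalent to $\delta(r)(r+m) < m\rho$, and a direct computation gives $G'(r) = \frac{\delta(r)(\rho-\delta(r)) + r\rho\,\delta'(r)}{(\rho-\delta(r))^2}$, so that a critical point of $G$ solves $\delta'(r) = -\frac{\delta(r)(\rho-\delta(r))}{r\rho}$. The plan is to carry out this optimisation piece by piece and to exhibit, in each regime of \eqref{eq:def_delta_min}, an explicit $r$ at which $G(r)$ equals the value $h^*$ of \eqref{eq:def_h_min} (or lies below it). In the regime $2|\chi|\le K^2$ the minimiser sits at the boundary $r=2$ when $\rho\le 2K^2$, which yields $G(2) = \frac{2\delta^*}{\rho-\delta^*} \le \frac{2\rho}{\rho-\delta^*} = h^*$, whereas for $\rho>2K^2$ it moves into the interior $(1,2)$, which is exactly what produces the extra term $(\sqrt\rho - K\sqrt2)^2$ and the indicator $\mathbf 1_{\rho>2K^2}$ in $h^*$ (one checks the denominator equals $(\sqrt{2\rho}-K)^2 - 2\chi$). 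In the regime $2\chi > K^2$ the same critical-point equation, solved using $\sqrt{\delta^*} = \sqrt\chi + \tfrac{K}{\sqrt2}$, reproduces $h^* = \frac{\rho}{(\sqrt\rho + \sqrt\chi - K/\sqrt2)(\sqrt\rho - \sqrt{\delta^*})}$.

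Having shown $\inf\{G(r) : r>1,\ \delta(r)<\rho\} \le h^*$, I would conclude as follows. Since $m > h^*$, the definition of the infimum provides some $r>1$ with $\delta(r) < \rho$ and $G(r) < m$; this single $r$ satisfies both required inequalities, because membership in the feasible set already forces $\delta(r) < \rho$. The degenerate case $2\chi < -K^2$ (where $\delta^* = -\infty$, $h^* = 0$) is handled separately and more cheaply: here $\delta(r) = r(\chi+\tfrac{K^2}{2})\to -\infty$ as $r\to\infty$, so one picks $r$ large enough that $\delta(r) < \min(\rho,0)$; then $\rho - \delta(r) > 0$ and $r\delta(r) < 0$ force $G(r) < 0 < m$.

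I expect the main obstacle to be the explicit optimisation of $G$ in the second paragraph: identifying in which regime the minimiser is interior versus at $r=2$, solving the critical-point equation in closed form, and algebraically matching the result to the piecewise formula \eqref{eq:def_h_min} — in particular verifying the $\rho>2K^2$ transition and the square-root identities in the case $2\chi > K^2$. Care is also needed at the regime boundaries $2|\chi| = K^2$ and to record that $\rho > \delta^* \ge 0$ (hence $\rho > 0$) outside the degenerate case, which is what legitimises dividing by $\rho$ and the manipulations above.
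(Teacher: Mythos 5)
Your scaffolding is fine: identifying $\delta^*$ as $\inf_{r>1}\delta(r)$ matches the paper, the final deduction from ``$\inf G\le h^*$ plus $m>h^*$'' is valid, and your treatment of the regime $2\chi<-K^2$ and of the sub-case $2|\chi|\le K^2$, $\rho\le 2K^2$ (take $r=2$) is correct. The gap is in the two remaining regimes, where your plan rests on claims about minimizing $G(r)=\frac{r\delta(r)}{\rho-\delta(r)}$ that are false. The value $h^*$ in \eqref{eq:def_h_min} is \emph{not} the minimum of $G$; it is the minimum of the different function $h(r)=\frac{\rho r}{\rho-\delta(r)}=r+G(r)$, which is what the paper optimizes. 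The reason $h$ is explicitly minimizable is a cancellation special to it: on $(1,2)$ one has $\rho-\delta(r)+r\delta'(r)=\rho-\frac{K^2r^2}{2(r-1)^2}$, so $\chi$ disappears from $h'$, the critical point is $r^\dag=1+\frac{K}{\sqrt{2\rho}-K}$ (lying in $(1,2)$ exactly when $\rho>2K^2$, whence the indicator), and $h(r^\dag)=-\rho/\delta'(r^\dag)=\frac{2\rho}{(\sqrt{2\rho}-K)^2-2\chi}$. Your critical-point equation for $G$, namely $\delta(\rho-\delta)+r\rho\delta'=0$, retains $\chi$, is not solved by $r^\dag$, and its critical value is not $h^*$. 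A concrete counterexample to both of your claims: take $\chi=0$, $K=1$, $\rho=4>2K^2$. Writing $s=r-1$, the numerator of $G'$ on $(1,2)$ equals $\frac{(1+s)(7s-9)}{4s^2}<0$, so $G$ decreases on the feasible part of $(1,2)$ and increases on $(2,8)$; its minimum over the feasible set is attained at the kink $r=2$ (not at an interior point), and equals $G(2)=2/3$, whereas $\frac{2\rho}{(\sqrt{2\rho}-K)^2-2\chi}=\frac{8}{(2\sqrt{2}-1)^2}\approx 2.39$. So the closed-form solving and ``algebraic matching'' that you yourself single out as the main obstacle cannot be carried out: in general $\inf G<h^*$ strictly, and the identities you propose to check are simply not true of $G$.

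The repair is short and is precisely the paper's device: on the feasible set $\{r>1:\ \delta(r)<\rho\}$ one has $G(r)<h(r)$ (same positive denominator, and $r\delta(r)<r\rho$), so it suffices to prove $\min h=h^*$ --- the paper's case analysis, made tractable by the $\chi$-cancellation above --- and then any feasible $r$ with $h(r)<m$ satisfies both required inequalities, since $\delta(r)<\rho$ and $G(r)<h(r)<m$. This domination is not wasteful: in the application (proof of Proposition \ref{prop:exists_sol_trunc_BSDE}) one must pick H\"older exponents with $(h-1)(\hbar-1)=1$, $\delta h<\rho$ and $r\hbar\le m$, which is possible precisely when $\frac{r\rho}{\rho-\delta}<m$; so the quantity that really must be controlled is $h(r)$, not $G(r)$ (the lemma's display is, in this sense, weaker than what the paper's proof establishes and uses). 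If you insist on working with $G$ itself, the two problematic regimes can be salvaged by evaluating $G$ at the minimizer of $\delta$ (that is, $r=2$ when $2|\chi|\le K^2$, and $r^*=1+K/\sqrt{2\chi}$ when $2\chi>K^2$) and verifying $G\le h^*$ there; this is true, but the verification requires different algebra from what you outline --- e.g.\ for $2|\chi|\le K^2$, $\rho>2K^2$ it reduces to showing $\delta^*(\sqrt{\rho}-K\sqrt{2})^2\le(\rho-\delta^*)^2$ --- and the domination by $h$ remains the cleanest route.
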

\begin{proof}
Let us define the function $\delta\colon (1,\infty) \to \R$, 
$$\delta(r) =  r \left[ \chi + \frac{K^2}{2((r-1)\wedge 1)} \right].$$
We show that $\delta^*$ is the minimal value of $\delta$. We first assume that $K \neq 0$. Then $\lim_{r \to 1} \delta(r) = + \infty$. 

\begin{itemize}
\item \textbf{Case 1:} $\chi < -K^2/2$. $\delta$ is decreasing and tends to $-\infty$ as $r$ tends to $+\infty$. Thus $\delta^* = -\infty$.

\item \textbf{Case 2:} $\chi = -K^2/2$. $\delta$ is a non increasing function with $\delta(r) > 0$ for any $r < 2=r^*$ and $\delta(r) = 0$ for any $r \geq 2=r^*$. Hence $\delta^* = 0$.

\item \textbf{Case 3:} $\chi > - K^2/2$. The function $\delta$ tends to $+\infty$ when $r$ tends to $+\infty$ and has a strict minimum at $r^* \in [1,2]$:
$$r^* = 1 + \left( \mathbf{1}_{ - \frac{K^2}2<\chi \leq \frac{K^2}{2}} + \frac{K}{\sqrt{2\chi}} \mathbf{1}_{ \chi > \frac{K^2}{2}} \right).$$
Moreover the minimum $\delta^*=\delta(r^*) > 0$ is given by:

$$\delta^*= \left\{ \begin{array}{ll}
2\left( \chi + \frac{K^2}{2} \right) = K^2 + 2\chi & \mbox{if } -K^2<2\chi \leq K^2,\\
\chi \left( 1+ \frac{K}{\sqrt{2\chi}}\right)^2 = \chi( r^*)^2 & \mbox{if } 2\chi > K^2.
\end{array} \right.$$ 
\end{itemize}
Gathering together the above results implies that $\delta^*$ defined in Equation \eqref{eq:def_delta_min} is the minimal value of $\delta$.

Therefore if $\rho > \delta^*$ (Condition \textbf{(B)}), there exists an open interval $(R_1,R_2)$ such that for any $r \in (R_1,R_2)$, $\rho > \delta(r) \geq \delta^*$. In Case 1, we have $1 < R_1$ and $R_2 = +\infty$; in Case 2, $1<R_1 < 2$ and $R_2 = +\infty$, and in Case 3, $1 < R_1 < r^* < R_2 < +\infty$. Let us define on $(R_1,R_2)$ the function
$$h(r) = \frac{\rho r}{\rho-\delta(r)}.$$
\begin{itemize}
\item \textbf{Case 1:} here $R_2= +\infty$, $\delta^*= -\infty$. The optimal choice of $\rho$ is $\rho < 0$ (see Remark \ref{rem:chi_small}). Then for any $r\in (R_1,+\infty)$, $h(r) \leq 0 <m$. 
\end{itemize}
In the other cases we will prove that the minimum value of $h$ on $(R_1,R_2)$ is $h^*$. Hence if $m> h^*$ (Condition \textbf{A3'}), there exists a value $r \in (R_1,R_2)$ such that $m > h(r) \geq h^*$ and since $\rho > \delta(r)$ on this interval, the lemma is proved.

Note that $\lim_{r \to R_1} h(r) = +\infty$ and $\rho > 0$ since $\delta^*\geq 0$. The derivative of $h$ (expect for $r=2$) is equal to
$$h'(r) = \frac{\rho}{(\rho-\delta(r))^2} \left( \rho-\delta(r)+r\delta'(r)\right).$$
For $r > 2$, $h'(r) = \rho^2 / (\rho-\delta(r))^2 > 0$. For $1 < r < 2$, we have
\begin{eqnarray*}
h'(r) & = & \frac{\rho}{(\rho-\delta(r))^2} \left( \rho - \frac{K^2}{2} \frac{r^2}{(r-1)^2} \right) \\
& = &  \frac{\rho}{(\rho-\delta(r))^2}\left(\sqrt{ \rho} - \frac{Kr}{\sqrt{2}(r-1)} \right)\left(\sqrt{ \rho} + \frac{Kr}{\sqrt{2}(r-1)} \right) .
\end{eqnarray*}
Therefore for some $r^{\dag} \in (1,2)$, $h'(r^{\dag})=0$ if and only if :
$$\frac{\sqrt{2 \rho}}{K} = \frac{r^{\dag}}{r^{\dag}-1} \Leftrightarrow \rho > 2K^2 \mbox{ and } r^{\dag} = 1 + \frac{K}{\sqrt{2\rho} - K} \in (1,2).$$
From the convexity of $\delta$ if $r^{\dag}$ exists, then $R_1 < r^\dag < R_2$ and 
$$h(r^{\dag}) = - \frac{\rho}{\delta'(r^{\dag})} = \frac{2\rho}{(\sqrt{2\rho} - K)^2- 2 \chi}.$$
\begin{itemize}

\item \textbf{Case 2:} here $\chi + K^2/2=0$, $R_2= +\infty$. If $\rho \leq 2K^2$ the minimal value of $h$ is attained at $r=2$, with $h^*=2$. If $\rho > 2K^2$, then 
\begin{eqnarray*}
h^* & = & h(r^\dag) = \frac{2\rho}{(\sqrt{2\rho} - K)^2- 2 \chi} = \frac{2\rho}{(\sqrt{2\rho} - K)^2+K^2} = \frac{2\rho}{\rho+ (\sqrt{\rho} - K\sqrt{2})^2} .
\end{eqnarray*}

\item \textbf{Case 3:} here $\rho > \delta^*> 0$ and $1 < R_1 < R_2 < +\infty$.
\begin{itemize}
\item[\textbf{a.}] $\chi < K^2/2$: then $R_2 > 2$. If $\delta^* = K^2 + 2\chi< \rho < 2K^2$, then  
$$h^*=h(2)= \frac{2\rho}{\rho - (K^2 + 2\chi)}.$$
Else if $\rho > 2K^2$ then 
\begin{eqnarray*}
h^*& =& h(r^\dag)=  \frac{2\rho}{(\sqrt{2\rho} - K)^2- 2 \chi} =  \frac{2\rho}{(\sqrt{2\rho} - K)^2 +K^2 - (K^2+ 2 \chi)}\\
& = & \frac{2\rho}{\rho+ (\sqrt{\rho} - K\sqrt{2})^2- (K^2+ 2 \chi)}
\end{eqnarray*}
Finally
$$h^*=  \frac{2\rho}{\rho- (K^2+ 2 \chi)+ (\sqrt{\rho} - K\sqrt{2})^2 \mathbf{1}_{\rho > 2K^2}}.$$
\item[\textbf{b.}] $ \chi \geq K^2/2$. Then $\delta^* \geq 2K^2$. Hence $\rho > 2K^2$. Thus the minimum of $h$ is attained at $h(r^\dag)$ :
\begin{eqnarray*}
h^* & = & h(r^\dag) = \frac{2\rho}{(\sqrt{2\rho} - K)^2- 2 \chi} = \frac{\rho}{\left( \sqrt{\rho} -\sqrt{\chi} - \frac{K}{\sqrt{2}}\right)\left( \sqrt{\rho} +\sqrt{\chi} - \frac{K}{\sqrt{2}}\right)} \\
& = & \frac{\rho}{ \sqrt{\rho} +\sqrt{\chi} - \frac{K}{\sqrt{2}}} \times \frac{1}{\sqrt{\rho} - \left(\sqrt{\chi} + \frac{K}{\sqrt{2}}\right)}.
\end{eqnarray*}

\end{itemize}

\end{itemize}
Let us now summarize the results. $h^*$ is given by (see also Equation \eqref{eq:def_h_min}):
$$h^*= \left\{ \begin{array}{ll}
0 & \mbox{if } 2\chi < - K^2,\\
\frac{2\rho}{\rho- \delta^*+ (\sqrt{\rho} - K\sqrt{2})^2 \mathbf{1}_{\rho > 2K^2}} & \mbox{if } 2|\chi| \leq K^2,\\
 \frac{\rho}{ \sqrt{\rho} +\sqrt{\chi} - \frac{K}{\sqrt{2}}} \times \frac{1}{\sqrt{\rho} - \sqrt{\delta^*}}& \mbox{if } 2\chi > K^2.
\end{array} \right.$$ 
Note for $K=0$ that the formula \eqref{eq:def_delta_min} still holds and for $\chi =0$, $h^*=1$ and for $\chi > 0$, $h^* = \rho/(\rho-\chi)$. 
\end{proof}

\subsection*{Acknowledgements.} 
The authors would like to thank the referees for helpful comments and suggestions. Thomas Kruse acknowledges the financial support from the French Banking Federation through the Chaire "Markets in Transition".

\bibliography{biblio}

\def\cprime{$'$}
\begin{thebibliography}{10}

\bibitem{almg:12}
R.~Almgren.
\newblock Optimal trading with stochastic liquidity and volatility.
\newblock {\em SIAM Journal on Financial Mathematics}, 3(1):163--181, 2012.

\bibitem{almg:chri:01}
R.~Almgren and N.~Chriss.
\newblock Optimal execution of portfolio transactions.
\newblock {\em Journal of Risk}, 3:5--40, 2001.

\bibitem{anki:jean:krus:13}
S.~Ankirchner, M.~Jeanblanc, and T.~Kruse.
\newblock B{SDE}s with {S}ingular {T}erminal {C}ondition and a {C}ontrol
  {P}roblem with {C}onstraints.
\newblock {\em SIAM J. Control Optim.}, 52(2):893--913, 2014.

\bibitem{anki:krus:13b}
S.~Ankirchner and T.~Kruse.
\newblock Optimal position targeting with stochastic linear-quadratic costs.
\newblock In {\em Advances in mathematics of finance}, volume 104 of {\em
  Banach Center Publ.}, pages 9--24. Polish Acad. Sci. Inst. Math., Warsaw,
  2015.

\bibitem{bria:dely:hu:03}
Ph. Briand, B.~Delyon, Y.~Hu, E.~Pardoux, and L.~Stoica.
\newblock {$L^p$} solutions of backward stochastic differential equations.
\newblock {\em Stochastic Process. Appl.}, 108(1):109--129, 2003.

\bibitem{carr:fish:ruf:14}
P.~Carr, T.~Fisher, and J.~Ruf.
\newblock On the hedging of options on exploding exchange rates.
\newblock {\em Finance Stoch.}, 18(1):115--144, 2014.

\bibitem{el1997backward}
N.~El~Karoui, S.~Peng, and M.~C. Quenez.
\newblock Backward stochastic differential equations in finance.
\newblock {\em Mathematical finance}, 7(1):1--71, 1997.

\bibitem{fors:kenn:12}
P.~A. Forsyth, J.~S. Kennedy, S.T. Tse, and H.~Windcliff.
\newblock Optimal trade execution: a mean quadratic variation approach.
\newblock {\em Journal of Economic Dynamics and Control}, 36(12):1971--1991,
  2012.

\bibitem{frie:76}
A.~Friedman.
\newblock {\em Stochastic differential equations and applications. {V}ol. 2}.
\newblock Academic Press [Harcourt Brace Jovanovich, Publishers], New
  York-London, 1976.
\newblock Probability and Mathematical Statistics, Vol. 28.

\bibitem{gath:schi:11}
J.~Gatheral and A.~Schied.
\newblock Optimal trade execution under geometric brownian motion in the
  almgren and chriss framework.
\newblock {\em International Journal of Theoretical and Applied Finance},
  14(03):353--368, 2011.

\bibitem{gilb:trud:01}
D.~Gilbarg and N.~S. Trudinger.
\newblock {\em Elliptic partial differential equations of second order}.
\newblock Classics in Mathematics. Springer-Verlag, Berlin, 2001.
\newblock Reprint of the 1998 edition.

\bibitem{grae:hors:qiu:13}
P.~Graewe, U.~Horst, and J.~Qiu.
\newblock A non-{M}arkovian liquidation problem and backward {SPDE}s with
  singular terminal conditions.
\newblock {\em SIAM J. Control Optim.}, 53(2):690--711, 2015.

\bibitem{grae:hors:sere:13}
P.~{Graewe}, U.~{Horst}, and E.~{S{\'e}r{\'e}}.
\newblock {Smooth solutions to portfolio liquidation problems under
  price-sensitive market impact}.
\newblock {\em ArXiv e-prints}, 2013.

\bibitem{greb:nguy:13}
D.~S. Grebenkov and B.-T. Nguyen.
\newblock Geometrical structure of {L}aplacian eigenfunctions.
\newblock {\em SIAM Rev.}, 55(4):601--667, 2013.

\bibitem{hors:nauj:14}
U.~Horst and F.~Naujokat.
\newblock When to cross the spread? trading in two-sided limit order books.
\newblock {\em SIAM Journal on Financial Mathematics}, 5(1):278--315, 2014.

\bibitem{jaco:79}
J.~Jacod.
\newblock {\em Calcul stochastique et probl\`emes de martingales}, volume 714
  of {\em Lecture Notes in Mathematics}.
\newblock Springer, Berlin, 1979.

\bibitem{jaco:shir:03}
J.~Jacod and A.~N. Shiryaev.
\newblock {\em Limit theorems for stochastic processes}, volume 288 of {\em
  Grundlehren der Mathematischen Wissenschaften [Fundamental Principles of
  Mathematical Sciences]}.
\newblock Springer-Verlag, Berlin, second edition, 2003.

\bibitem{jean:mast:poss:15}
M.~Jeanblanc, T.~Mastrolia, D.~Possama{\"i}, and A.~R{\'e}veillac.
\newblock Utility maximization with random horizon: a {BSDE} approach.
\newblock {\em Int. J. Theor. Appl. Finance}, 18(7):1550045, 43, 2015.

\bibitem{jean:reve:14}
M.~Jeanblanc and A.~R{\'e}veillac.
\newblock A note on {BSDE}s with singular driver coefficients.
\newblock In {\em Arbitrage, credit and informational risks}, volume~5 of {\em
  Peking Univ. Ser. Math.}, pages 207--224. World Sci. Publ., Hackensack, NJ,
  2014.

\bibitem{kall:02}
O.~Kallenberg.
\newblock {\em Foundations of modern probability.}
\newblock Springer: Berlin, 2002.

\bibitem{kell:57}
J.~B. Keller.
\newblock On solutions of {$\Delta u=f(u)$}.
\newblock {\em Comm. Pure Appl. Math.}, 10:503--510, 1957.

\bibitem{krat:scho:13}
P.~Kratz and T.~Sch{\"o}neborn.
\newblock Portfolio liquidation in dark pools in continuous time.
\newblock {\em Mathematical Finance}, 2013.

\bibitem{krus:popi:14}
T.~Kruse and A.~Popier.
\newblock B{SDE}s with monotone generator driven by {B}rownian and {P}oisson
  noises in a general filtration.
\newblock {\em Stochastics An International Journal of Probability and
  Stochastic Processes}, 2015.
\newblock \url{http://dx.doi.org/10.1080/17442508.2015.1090990}.

\bibitem{osse:57}
R.~Osserman.
\newblock On the inequality {$\Delta u\geq f(u)$}.
\newblock {\em Pacific J. Math.}, 7:1641--1647, 1957.

\bibitem{pard:peng:90}
{\'E}.~Pardoux and S.~G. Peng.
\newblock Adapted solution of a backward stochastic differential equation.
\newblock {\em Systems Control Lett.}, 14(1):55--61, 1990.

\bibitem{pham2009continuous}
H.~Pham.
\newblock {\em Continuous-time stochastic control and optimization with
  financial applications}, volume~61.
\newblock Springer Science \& Business Media, 2009.

\bibitem{pins:95}
R.G. Pinsky.
\newblock {\em Positive harmonic functions and diffusion}, volume~45 of {\em
  Cambridge Studies in Advanced Mathematics}.
\newblock Cambridge University Press, Cambridge, 1995.

\bibitem{popi:06}
A.~Popier.
\newblock Backward stochastic differential equations with singular terminal
  condition.
\newblock {\em Stochastic Process. Appl.}, 116(12):2014--2056, 2006.

\bibitem{popi:07}
A.~Popier.
\newblock Backward stochastic differential equations with random stopping time
  and singular final condition.
\newblock {\em Ann. Probab.}, 35(3):1071--1117, 2007.

\bibitem{quen:sule:13}
M.-C. Quenez and A.~Sulem.
\newblock B{SDE}s with jumps, optimization and applications to dynamic risk
  measures.
\newblock {\em Stochastic Process. Appl.}, 123(8):3328--3357, 2013.

\bibitem{schie:13}
A.~Schied.
\newblock A control problem with fuel constraint and dawson--watanabe
  superprocesses.
\newblock {\em The Annals of Applied Probability}, 23(6):2472--2499, 2013.

\end{thebibliography}

\end{document}